\DeclareMathOperator{\Var}{Var}
\begin{document}
\theoremstyle{plain}
\newtheorem{theorem}{Theorem}[section]
\newtheorem{definition}[theorem]{Definition}
\newtheorem{proposition}[theorem]{Proposition}
\newtheorem{lemma}[theorem]{Lemma}

\newtheorem{corollary}[theorem]{Corollary}
\newtheorem{conjecture}[theorem]{Conjecture}
\newtheorem{remark}[theorem]{Remark}
\newtheorem{open}[theorem]{Open question}
\numberwithin{equation}{section}
\newtheorem*{claim}{Claim}
\newtheorem{example}[theorem]{Example}
\newtheorem{assumption}[theorem]{Assumption}

\newcommand{\Id}{\text{Id}}
\newcommand{\mnum}{A}
\newcommand{\mrat}{B}
\newcommand{\R}{\mathbb{R}}
\newcommand{\Z}{\mathbb{Z}}
\newcommand{\N}{\mathbb{N}}
\newcommand{\X}{\mathcal{X}}
\newcommand{\Y}{\mathcal{Y}}
\newcommand{\supp}{\text{supp}}
\newcommand{\red}{\color{red}}
\newcommand{\black}{\color{black}}
\newcommand{\A}{A}
\newcommand{\ppow}{\beta}
\newcommand{\gau}{\ppow}
\newcommand{\Cor}{\mathcal{C}}
\renewcommand{\H}{\mathcal{H}}
\newcommand{\B}{B}
\newcommand{\Ball}{\mathcal{B}}
\newcommand{\C}{C}
\newcommand{\De}{D}
\newcommand{\E}{E}
\newcommand{\ct}{5\B\sqrt{d}}
\newcommand{\foc}{{\rm foc}}
\newcommand{\dist}{{\rm dist}}

\newcommand{\osc}{\mathrm{osc}}
\newcommand{\TV}{\mathrm{TV}}
\newcommand{\sca}[2]{\langle #1 , #2\rangle}
\newcommand{\Kant}{\mathcal{K}}
\newcommand{\eps}{\varepsilon}
\newcommand{\dd}{\mathrm{d}}
\newcommand{\Class}{\mathcal{C}}
\newcommand{\Prob}{\mathcal{P}}
\newcommand{\D}{\mathrm{D}}
\newcommand{\IK}{\mathcal{I}}
\newcommand{\IG}{{\rm IG}}
\newcommand{\Rsp}{\mathbb{R}}
\renewcommand{\d}{\mathrm{d}}
\newcommand{\exponentiate}[1]{\exp\left(#1\right)}

\setcounter{tocdepth}{1}

\title[Stability of optimal transport maps on Riemannian manifolds]{Stability of optimal transport maps\\ on Riemannian manifolds}
\author{Jun Kitagawa}\address{Jun Kitagawa. Department of Mathematics, Michigan State University, 619 Red Cedar Road, East Lansing, MI 48824} \email{kitagawa@math.msu.edu}
 
\author{Cyril Letrouit}\address{Cyril Letrouit. Université Paris-Saclay, CNRS, Laboratoire de mathématiques d’Orsay, 91405, Orsay, France} \email{cyril.letrouit@universite-paris-saclay.fr}

\author{Quentin Mérigot}\address{Quentin Mérigot. Université Paris-Saclay, CNRS, Inria, Laboratoire de mathématiques d’Orsay, 91405, Orsay, France /
DMA, École normale supérieure, Université PSL, CNRS, 75005 Paris, France  / Institut universitaire de France (IUF)} \email{quentin.merigot@universite-paris-saclay.fr}

\keywords{optimal transport, quantitative stability, Boman chain condition}
\subjclass[2020]{
49Q22, 
49K40, 
53C65
}

\date{\today}

\begin{abstract}
We prove quantitative bounds on the stability of optimal transport maps and Kantorovich potentials from a fixed source measure $\rho$ under variations of the target measure $\mu$, when the cost function is the squared  Riemannian distance on a Riemannian manifold. Previous works were restricted to subsets of Euclidean spaces, or made specific assumptions either on the manifold, or on the regularity of the transport maps. Our proof techniques combine  entropy-regularized optimal transport with spectral and integral-geometric techniques. As some of the arguments do not rely on the Riemannian structure, our work also paves the way towards understanding stability of optimal transport in more general geometric spaces.
\end{abstract}
\maketitle
\tableofcontents

\section{Introduction and main results}
\subsection{Motivation and setting} \label{s:motivationandsetting}

\subsubsection{Optimal transport}
The nearly 250 years old Monge transportation problem consists in finding the optimal way to transport mass from a given source to a given target probability measure, while minimizing an integrated cost. Mathematically, 
given two probability measures $\rho$ and $\mu$ defined on measurable spaces $\X$ and $\Y$ respectively, and a measurable \emph{cost function} $c:\X\times\Y\rightarrow \R$, Monge's problem is to find a measurable map $T:\X\rightarrow \Y$ with $T_{\#}\rho=\mu$ (i.e., $\mu(A)=\rho(T^{-1}(A))$ for any measurable $A\subset\Y$), minimizing the total transportation cost, meaning
\begin{equation}\label{e:mincost}
\int_\X c(x, T(x)) \dd\rho(x) = \min_{S_{\#}\rho=\mu} \int_\X c(x, S(x)) \dd\rho(x).
\end{equation}
Such a $T$ is called an \emph{optimal transport map}.

In \cite{brenier}, Brenier discovered that if both $\rho$ and $\mu$ are measures defined on $\R^d$ with the choice of quadratic cost $c(x,y)=|x-y|^2$, and if the source measure $\rho$ is absolutely continuous with respect to the Lebesgue measure, then there exists a $\rho$-a.e. unique solution $T$ of \eqref{e:mincost}. Moreover, $T$ is equal $\rho$-a.e. to the gradient of a convex function, and is the unique such map pushing  $\rho$ forward to $\mu$ given by the gradient of a convex function. 

Optimal transport has many applications, ranging from economics to image processing and machine learning;  
in various applications, one key issue is stability. A question of paramount importance to
practitioners is the following: if one perturbs the marginals $\rho$ and $\mu$, how much does the optimal transport map from $\rho$ to $\mu$ change? This question is also of strong theoretical interest, since stability of solutions, together with existence and uniqueness (here provided by Brenier's theorem), is one of the three components of a well-posed PDE problem. It has been known for a long time that the optimal transport map is stable in some weak sense with respect to perturbations of the marginals $\rho$ and $\mu$ (see e.g. \cite[Corollary 5.23]{villani2}), and the proof relies on non-quantitative compactness arguments. However, it is only in recent years that this problem has started being addressed quantitatively. 

Our paper is devoted to quantitative stability bounds for the optimal transport map (and the associated Kantorovich potentials, see below) when $\rho$ and $\mu$ are defined on Riemannian manifolds. Until now, the available techniques were not sufficient to address these quantitative stability questions in non-Euclidean spaces. Combining a regularization idea related to entropic optimal transport with some tools recently introduced in  \cite{delalandemerigot}, \cite{delalande}, \cite{letrouitmerigot}, we are able to bridge this gap in the present paper. Various ingredients in our approach (namely Sections~\ref{sec: kantorovich functional}, \ref{sec: I_rho}, and \ref{sec: Boman proof}) are robust enough to apply to more general metric spaces, and this paves the way toward understanding optimal transport stability in such settings.  In the next paragraph, we recall the basic framework of optimal transport in Riemannian manifolds.

\subsubsection{The optimal transport map in Riemannian manifolds}
Let $M$ be a smooth $d$-dimensional Riemannian manifold. For $\rho$ and $\mu$, two probability measures on $M$, we denote by $\mathcal{S}(\rho,\mu)$ the set of Borel maps $S$ which push $\rho$ forward to $\mu$, i.e., $S_{\#}\rho=\mu$. Monge's problem consists of finding the map $T: M\rightarrow M$ minimizing the transportation cost 
\begin{equation}\label{e:transportcost}
\mathscr{C}(S):=\int_M c(x,S(x)) d\rho(x)
\end{equation}
among all $S\in \mathcal{S}(\rho,\mu)$.
McCann's seminal paper \cite{mccann} solved the Monge problem on Riemannian manifolds, when $c$ is the Riemannian distance squared on $M$. McCann's results \cite[Theorems 8, 9, 13]{mccann} (see detailed statement in Theorem \ref{t:mccann} below) assert that if $\rho$ is compactly supported and absolutely continuous with respect to the volume measure on $M$, then there exists an optimal transport map which is essentially unique, and moreover it is of the form $T: x\mapsto \exp_x(-\nabla\phi(x))$ where $\exp$ denotes the exponential map, and $\phi$ is obtained as a $c$-transform, see \eqref{e:ctransform}. In the setting of our main result Theorem \ref{t:stabpote2}, if one imposes additionally that $\phi$ satisfies $\int_\X \phi \dd\rho=0$, then such a $\phi$ is unique. We call this the \emph{Kantorovich potential} from $\rho$ to $\mu$. 

\subsection{Main result}
Our main result is a bound which quantifies the stability of optimal transport maps and Kantorovich potentials with respect to variations of the target measure, on Riemannian manifolds. The source measure is fixed and assumed to be supported on a John domain $\X$, which we define immediately below. This assumption is satisfied in many cases of interest: in particular, in a Riemannian manifold $M$, any bounded (connected) Lipschitz domain is a John domain, and if $M$ is compact and connected, then $\X=M$ itself is a John domain of $M$.

\begin{definition}[John domains in metric spaces]\label{d:johnmetric}
A bounded open subset $\mathcal{X}$ of a metric space is called a John domain if there exist a distinguished point $x_0\in \mathcal{X}$ and a constant $\eta>0$ such that, for every $x\in\mathcal{X}$, there is a rectifiable curve $\gamma:[0,\ell(\gamma)]\rightarrow \mathcal{X}$ parametrized by the arclength (and whose length $\ell(\gamma)$ depends on $x$) such that $\gamma(0)=x$, $\gamma(\ell(\gamma))=x_0$, and for any $t\in[0,\ell(\gamma)]$
\begin{equation}\label{e:johncurve}
\dist(\gamma(t),\mathcal{X}^c)\geq \eta t,
\end{equation}
where $\mathcal{X}^c$ denotes the complement of $\mathcal{X}$.
\end{definition}
In other words, in a John domain it is possible to find a path from any point $x$ whose distance to the boundary grows at a linear rate, with a rate independent of $x$. John domains are automatically bounded and connected. John domains were introduced in \cite{john}, and the terminology was coined in \cite{martio}. Below, $W_1$ denotes the Wasserstein distance; since $W_1$ is the smallest of the $p$-Wasserstein distances, our result implies the same stability result for any higher order distance.

\begin{theorem}[Main result]\label{t:stabpote2}
Let $M$ be a smooth and connected Riemannian manifold, endowed with the quadratic cost $c(x,y)=\frac12 \dist(x,y)^2$ where $\dist$ denotes the Riemannian distance. Let $\mathcal{X}\subset M$ be a John domain and $\mathcal{Y}\subset M$ be compact, and let  $\rho$ be a probability measure absolutely continuous with respect to the Riemannian volume on $\mathcal{X}$, with density 
bounded from above and below by positive constants. Then there exists $C>0$ such that for any $\mu,\nu\in\mathcal{P}(\mathcal{Y})$,
\begin{equation}\label{e:varinrho0}
\|\phi_\mu-\phi_\nu\|_{L^2(\rho)}\leq C W_1(\mu,\nu)^{1/2}
\end{equation}
where $\phi_\mu$ denotes the (unique) Kantorovich potential from $\rho$ to $\mu$, and similarly $\phi_\nu$ denotes the unique Kantorovich potential from $\rho$ to $\nu$.
Moreover, if the boundary $\partial\X$ of $\X$ has finite $(d-1)$-dimensional Hausdorff measure, then
\begin{equation}\label{e:mapsstab}
\int_M \dist(T_\mu(x),T_\nu(x))^2d\rho(x)\leq CW_1(\mu,\nu)^{1/6}
\end{equation}
where $T_\mu$ and $T_\nu$ are the optimal transport maps from $\rho$ to $\mu$ and $\rho$ to $\nu$ respectively.
\end{theorem}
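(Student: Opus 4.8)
The plan is to prove the potential estimate \eqref{e:varinrho0} first, by reducing it to a \emph{variance inequality} for the dual functional, and then to deduce the map estimate \eqref{e:mapsstab} from \eqref{e:varinrho0} by interpolation. By McCann's theorem (Theorem~\ref{t:mccann}) and Kantorovich duality, $\phi_\mu$ is, up to an additive constant, the maximizer of the concave functional $\mathcal{F}_\mu(\phi):=\int_\X\phi\,\dd\rho+\int_\Y\phi^c\,\dd\mu$, where $\phi^c$ denotes the $c$-transform; since $\mathcal{F}_\mu$ is insensitive to additive constants, the normalization $\int_\X\phi_\mu\,\dd\rho=0$ merely fixes a representative, and every $c$-concave function is automatically Lipschitz with a constant depending only on $\X$ and $\Y$. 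The first goal is a bound of the following form: there is $C>0$, depending only on $\rho,\X,\Y$, such that for every $\mu\in\Prob(\Y)$ and every $c$-concave $\psi$ with $\int_\X\psi\,\dd\rho=0$,
\begin{equation}\label{e:varineq-plan}
\|\phi_\mu-\psi\|_{L^2(\rho)}^2\leq C\bigl(\mathcal{F}_\mu(\phi_\mu)-\mathcal{F}_\mu(\psi)\bigr).
\end{equation}
Granting \eqref{e:varineq-plan}, I would apply it to the pairs $(\mu,\phi_\nu)$ and $(\nu,\phi_\mu)$ and add: the $\rho$-integrals cancel and the remainder is exactly $\int_\Y(\phi_\mu^c-\phi_\nu^c)\,\dd(\mu-\nu)$; since $c=\tfrac12\dist^2$ is Lipschitz on the compact set $\Y$, the $c$-transforms $\phi_\mu^c,\phi_\nu^c$ are Lipschitz with a constant independent of $\mu,\nu$, so this is at most $C\,W_1(\mu,\nu)$ by Kantorovich--Rubinstein duality, which gives \eqref{e:varinrho0}.

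To prove \eqref{e:varineq-plan} I would use entropic regularization as a smoothing device. Replacing $\phi^c$ by the soft $c$-transform $\phi^{c,\eps}(y)=-\eps\log\int_\X e^{(\phi(x)-c(x,y))/\eps}\,\dd\rho(x)$ makes the regularized functional $\mathcal{F}^\eps_\mu$ smooth and strictly concave modulo constants, with maximizer $\phi^\eps_\mu\to\phi_\mu$ uniformly and $\mathcal{F}^\eps_\mu(\phi^\eps_\mu)\to\mathcal{F}_\mu(\phi_\mu)$ as $\eps\to0$. Along the segment $\phi_r:=(1-r)\phi^\eps_\mu+r\psi$, the first-order optimality of $\phi^\eps_\mu$ gives $\tfrac{\dd}{\dd r}\mathcal{F}^\eps_\mu(\phi_r)\big|_{r=0}=0$, and a direct computation gives
\begin{equation*}
-\frac{\dd^2}{\dd r^2}\mathcal{F}^\eps_\mu(\phi_r)=\frac1\eps\int_\Y\Var_{P_{r,y}}\!\bigl(\psi-\phi^\eps_\mu\bigr)\,\dd\mu(y),\qquad P_{r,y}\propto e^{(\phi_r(x)-c(x,y))/\eps}\,\dd\rho(x),
\end{equation*}
so by a second-order Taylor expansion $\mathcal{F}^\eps_\mu(\phi^\eps_\mu)-\mathcal{F}^\eps_\mu(\psi)$ equals the double $r$-integral of the right-hand side, and it remains to bound $\tfrac1\eps\int_\Y\Var_{P_{r,y}}(h)\,\dd\mu(y)$ from below by $c\,\|h\|_{L^2(\rho)}^2$, with $h:=\psi-\phi^\eps_\mu$ (so that $\int_\X h\,\dd\rho=0$), uniformly in $\eps$ and $r\in[0,1]$, before letting $\eps\to0$. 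I would split this into two steps: an integral-geometric step — using the uniform semiconcavity of $\phi^\eps_\mu$ and $\psi$, a quantitative Laplace-type estimate for the measures $P_{r,y}$, a change of variables along the (approximate) map pushing $\mu$ onto $\rho$, and control of the way the family $\{P_{r,y}\}_{y\in\Y}$ covers $\X$ — producing a lower bound of order $\int_\X|\nabla h|^2\,\dd\rho$; followed by a \emph{Poincar\'e--Wirtinger inequality on the John domain $\X$}, which turns $\int_\X|\nabla h|^2\,\dd\rho$ into $\|h\|_{L^2(\rho)}^2$ since $\int_\X h\,\dd\rho=0$. I expect this to be the main obstacle. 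The Poincar\'e inequality must hold on a domain that is merely a John domain, which I would obtain through a Boman chain decomposition of $\X$ — an argument of metric-measure nature that does not use the Riemannian structure, the two-sided density bounds on $\rho$ serving to pass between $L^2(\rho)$ and $L^2(\mathrm{vol})$ — and the lower bound on $-\tfrac{\dd^2}{\dd r^2}\mathcal{F}^\eps_\mu(\phi_r)$ must be made uniform in $\eps$ even though entropic potentials need not be uniformly semiconvex, so that a priori the conditional measures $P_{r,y}$ could concentrate too strongly at small $\eps$; controlling this non-degeneracy is the delicate point.

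Finally, for the map estimate \eqref{e:mapsstab}: McCann's theorem gives $T_\mu(x)=\exp_x(-\nabla\phi_\mu(x))$ and $T_\nu(x)=\exp_x(-\nabla\phi_\nu(x))$, and since $\phi_\mu,\phi_\nu$ are uniformly Lipschitz the velocities lie in a fixed compact subset of the tangent bundle on which $\exp$ is smooth, so $\dist(T_\mu(x),T_\nu(x))\leq C\,|\nabla\phi_\mu(x)-\nabla\phi_\nu(x)|$ in the Riemannian norm; it thus suffices to control $\|\nabla\phi_\mu-\nabla\phi_\nu\|_{L^2(\rho)}$ by a power of $W_1(\mu,\nu)$. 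For this I would interpolate between the $L^2(\rho)$ bound on $g:=\phi_\mu-\phi_\nu$ coming from \eqref{e:varinrho0} and a one-sided second-order bound on the potentials: $\phi_\mu$ and $\phi_\nu$ are uniformly semiconcave (being $c$-concave for a smooth cost on a compact region), and when $\partial\X$ has finite $(d-1)$-dimensional Hausdorff measure, a divergence-theorem argument on the finite-perimeter set $\X$ shows that $\nabla\phi_\mu$ and $\nabla\phi_\nu$ — hence $\nabla g$ — are of uniformly bounded variation on $\X$. Comparing $\nabla g$ with a regularization of itself at scale $t$, carried out at a scale that must shrink near $\partial\X$ (again via the John/Boman structure, which is what degrades the exponent), yields an estimate of the form $\|\nabla g\|_{L^2(\rho)}^2\lesssim t^{1/2}+\|g\|_{L^2(\rho)}/t$; optimizing over $t$ and inserting $\|g\|_{L^2(\rho)}\leq C\,W_1(\mu,\nu)^{1/2}$ gives $\|\nabla\phi_\mu-\nabla\phi_\nu\|_{L^2(\rho)}\leq C\,W_1(\mu,\nu)^{1/12}$, hence \eqref{e:mapsstab}. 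The boundary-layer analysis here is a secondary technical difficulty, also handled via the John condition.
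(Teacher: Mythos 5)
Your overall architecture is the right one (entropic regularization of the dual functional, a variance inequality whose right-hand side telescopes into $\int(\phi_\mu^c-\phi_\nu^c)\,\dd(\mu-\nu)\le C\,W_1(\mu,\nu)$, Boman chains to exploit the John condition, and semiconcavity plus the finite-perimeter hypothesis for the map estimate). But the central step of the potential bound is missing, and the specific route you propose for it is, I believe, not salvageable as stated. You reduce everything to the uniform-in-$\eps$ lower bound $\frac1\eps\int_\Y\Var_{P_{r,y}}(h)\,\dd\mu(y)\gtrsim\int_\X|\nabla h|^2\,\dd\rho$, and you yourself flag the obstruction: the conditional Gibbs measures $P_{r,y}$ need not spread at scale $\sqrt\eps$ in every direction. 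This is not a technicality one can expect to control: $c$-concave potentials generically have concave kinks, along which $x\mapsto c(x,y)-\phi_r(x)$ grows linearly rather than quadratically away from its argmin, so $P_{r,y}$ concentrates at scale $\eps$ (not $\sqrt\eps$) in those directions and $\frac1\eps\Var_{P_{r,y}}(h)$ sees only $O(\eps)$ of the corresponding directional derivative of $h$. Hence the gradient lower bound fails precisely in the non-smooth situations the theorem is meant to cover, and the subsequent Poincar\'e step has nothing to act on. The paper avoids this entirely: it never passes through $\int|\nabla h|^2$. Instead it regularizes on the \emph{target} side ($\psi^{c,\eps}(x)=-\eps\log\int_\Y e^{-(c(x,y)-\psi(y))/\eps}\dd\sigma(y)$), and derives a \emph{direct} bound $\langle\D^2\Kant^\eps_{\rho^V}(\psi)v,v\rangle\le -C^{-1}\Var_{\rho^V}\bigl(x\mapsto\sca{\mu^x_\eps[\psi]}{v}\bigr)$, where the right-hand side integrates in $t$ to $\Var_{\rho^V}(\psi_1^{c,\eps}-\psi_0^{c,\eps})$ with no gradient appearing. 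The mechanism producing this is the concavity of the auxiliary functional $\IK^\eps_{\rho^V}(\psi)=\log\int e^{\psi^{c,\eps}}\dd\rho^V$, proved via the weighted Pr\'ekopa--Leindler inequality under the Bakry--\'Emery condition $\mathrm{Hess}\,V+\mathrm{Ric}\ge\lambda$; this is the functional-inequality substitute for the pointwise Laplace analysis you would need, and it is insensitive to degeneracies of the potentials. Since such a $V$ exists only on small balls of a general manifold, the Boman chain decomposition is then used to glue the \emph{local strong-concavity estimates} themselves — a more essential role than the Poincar\'e inequality you assign it.

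On the map estimate: your pointwise reduction via the exponential map is exactly the paper's, but your interpolation $\|\nabla g\|_{L^2}^2\lesssim t^{1/2}+\|g\|_{L^2}/t$ via BV bounds on $\nabla\phi_\mu$ is only sketched and its boundary-layer analysis is unsubstantiated. The paper instead restricts to geodesics, applies a one-dimensional reverse Poincar\'e inequality for convex functions on each segment where the geodesic meets $\X$, and integrates over the unit sphere bundle using invariance of the Liouville measure; the hypothesis $\mathscr{H}^{d-1}(\partial\X)<\infty$ enters through a Crofton-type bound on the expected number of such segments, not through a perimeter/divergence argument. That route yields $\int|\nabla\phi_\mu-\nabla\phi_\nu|^2\dd\rho\le C\bigl(\int|\phi_\mu-\phi_\nu|^2\dd\rho\bigr)^{1/3}$ and hence the stated exponent. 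In summary: the skeleton is right, but the load-bearing step — a uniform strong-concavity (variance) estimate that survives degenerate potentials — is exactly the part you have not supplied, and the gradient-plus-Poincar\'e detour you propose in its place would break down.
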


As already mentioned, when $M$ is closed and connected, the above theorem covers the case where $\X=M$:
\begin{corollary}[Closed manifolds]
Let $M$ be a smooth, connected and compact Riemannian manifold without boundary, $\Y\subset M$,  and $\rho$ be a probability measure absolutely continuous with respect to the Riemannian volume on $M$, with density bounded from above and below by positive constants. Then the conclusions of Theorem \ref{t:stabpote2} hold.
\end{corollary}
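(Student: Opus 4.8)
The plan is to obtain this statement as the special case $\X := M$ of Theorem~\ref{t:stabpote2}, so the only work is to check that all hypotheses of that theorem are met. First I would observe that, since $M$ is compact, it has finite diameter, hence is bounded; it is open in itself and connected by assumption. Replacing $\Y$ by its closure $\overline{\Y}$ if necessary (a closed subset of the compact manifold $M$, hence compact, and with $\mathcal{P}(\Y)\subset\mathcal{P}(\overline{\Y})$), I may assume $\Y$ is compact. The density hypothesis on $\rho$ is verbatim the one required in Theorem~\ref{t:stabpote2} with $\X = M$.

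The remaining point is that $\X = M$ is a John domain in the sense of Definition~\ref{d:johnmetric}. Here $\X^c = M\setminus M = \emptyset$, so with the usual convention $\dist(\cdot,\emptyset) = +\infty$ the inequality \eqref{e:johncurve} holds trivially, for \emph{any} constant $\eta > 0$ and along \emph{any} rectifiable curve. To exhibit such a curve, fix a basepoint $x_0\in M$; for each $x\in M$, since a compact Riemannian manifold is complete, the Hopf--Rinow theorem provides a minimizing, arclength-parametrized (hence rectifiable) geodesic $\gamma$ from $x$ to $x_0$, which does the job. Thus $\X = M$ is a John domain, and Theorem~\ref{t:stabpote2} applied with $\X = M$ gives \eqref{e:varinrho0}, i.e. $\|\phi_\mu-\phi_\nu\|_{L^2(\rho)}\leq C\,W_1(\mu,\nu)^{1/2}$.

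For the transport map bound I would additionally check that $\partial\X$ has finite $(d-1)$-dimensional Hausdorff measure. Since $M$ has no boundary and $\X = M$ is open and closed in itself, its topological boundary in $M$ is $\partial\X = \overline{\X}\setminus\X = \emptyset$, so $\mathcal{H}^{d-1}(\partial\X) = 0 < \infty$; hence \eqref{e:mapsstab} applies as well, yielding $\int_M \dist(T_\mu,T_\nu)^2\,\d\rho \leq C\,W_1(\mu,\nu)^{1/6}$. I do not expect any genuine obstacle here: the entire analytic content is carried by Theorem~\ref{t:stabpote2}, and the only mild subtleties are the convention that the distance to the empty set is $+\infty$ and the (standard) appeal to completeness to produce the connecting curves.
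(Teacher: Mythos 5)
Your proposal is correct and follows exactly the route the paper intends: the corollary is presented as the special case $\X=M$ of Theorem~\ref{t:stabpote2}, justified by the remark that a compact connected manifold is a John domain of itself (with $\dist(\cdot,\emptyset)=+\infty$ making \eqref{e:johncurve} vacuous) and that $\partial\X=\emptyset$ trivially has finite $(d-1)$-dimensional Hausdorff measure. Your additional care about replacing $\Y$ by $\overline{\Y}$ and invoking Hopf--Rinow for the connecting curves is sound and consistent with the paper.
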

Theorem \ref{t:stabpote2} also covers the case where $\X$ is a bounded (connected) Lipschitz domain of a smooth Riemannian manifold, for instance a spherical cap, a bounded connected Lipschitz domain of $\R^d$ or of a hyperbolic space (see for instance \cite{alvarez} for applications of optimal transport on the hyperbolic space). In \cite[Theorem 1.9]{letrouitmerigot}, it has been shown that when $\rho$ is the uniform density on some appropriate non-John domain in $\R^d$, there exists no $C$, $\alpha$, $p >0$ such that the inequality $\|\phi_\mu-\phi_\nu\|_{L^2(\rho)}\leq CW_p(\mu,\nu)^\alpha$ holds for all $\mu,\nu$ supported in a given large compact set. In other words, in this case, Kantorovich potentials are extremely unstable. This shows the relevance of the John domain assumption in Theorem \ref{t:stabpote2}, at least regarding the stability of Kantorovich potentials.

Let us comment on the sharpness of the exponents in Theorem \ref{t:stabpote2}. Upper bounds with a $1/2$ exponent like in \eqref{e:varinrho0} have been derived in many places in the literature (see for instance \cite{delalandemerigot}, \cite{letrouitmerigot}, \cite{mischler}). We notice here for the first time that this $1/2$ exponent for Kantorovich potentials becomes sharp as $d\rightarrow +\infty$, see Remark \ref{r:sharp}. There are only few examples of probability measures $\rho$ for which a sharp exponent of stability for Kantorovich potentials has been derived, see \cite{letrouitmerigot}. In these examples, the density of $\rho$ is not bounded above and below by positive constants. Regarding stability of optimal transport maps, we do not know whether the exponent in \eqref{e:mapsstab} is sharp, it is only known that it cannot be larger than $1/2$ (see \cite{gigli}, \cite{merigot}).

Our results extend (and provide a new proof of) \cite[Theorem 1.7]{letrouitmerigot} by the last two authors, which corresponds to the particular case where $M=\R^d$. Our proof technique is different to that of \cite{letrouitmerigot}. The first part of our proof relies on a regularized version of the Kantorovich functional. In the second part we use gluing arguments for which we must show a so-called Boman chain condition, which is more intricate than in the Euclidean case. Note also that it should be possible to extend Theorem \ref{t:stabpote2} to cases where $\rho$ decays or blows-up polynomially, as in \cite[Section 1.3]{letrouitmerigot}, but for simplicity we shall not pursue this here.

As in \cite[Remark 4.2]{letrouitmerigot}, the connectedness of the support of $\rho$ is actually not necessary for the inequality \eqref{e:mapsstab} to hold: it is still true if $\X$ is a finite union of John domains whose boundary has finite $(d-1)$-dimensional Hausdorff measure, as may be seen with the same arguments as in \cite[Remark 4.2]{letrouitmerigot}.

We conjecture that our results are also true in more general metric measure spaces with synthetic curvature bounds -- as hinted for instance in Section \ref{s:convkantoreg} which is written in general metric spaces --, but again we shall not pursue this here.

\subsection{Related works}
Several recent works have established quantitative bounds on the stability of optimal transport maps and potentials in Euclidean spaces. In chronological order, \cite{gigli} initiated exploration of the subject by showing stability bounds around sufficiently regular transport maps. A few years later, \cite{berman} removed this regularity assumption and obtained quantitative stability bounds under the assumption that the source measure has a convex and compact support and is bounded above and below on this support (however, the stability exponents in \cite{berman} depend on the dimension). Later, a robust proof method was developed in \cite{merigot}, \cite{delalande} and \cite{delalandemerigot}, which removes the dimensional dependence in the results of \cite{berman}. Combined with gluing techniques which we shall also use in the present work, this proof strategy has been leveraged in \cite{letrouitmerigot} to cover much more general source measures, in particular (unbounded) log-concave measures, densities supported in John domains (with upper and lower bounds on the density), and compactly supported densities blowing-up polynomially at some points. 
In \cite{mischler}, the results of \cite{delalandemerigot} have been extended to optimal transport maps for the $p$-cost (with $p>1$). In \cite{divol}, the stability of entropic optimal transport has been considered, under minimal assumptions on the source and target measures; the bounds are tight, and blow-up as the entropic regularization parameter $\varepsilon$ tends to $0$. Finally, in a slightly different direction, quantitative stability of the associated pre-images in the semi-discrete case when the target measures remain supported on a fixed, finite set were treated in \cite{BansilKitagawa2020}.

Very few works in the literature deal with stability of optimal transport maps in non-Euclidean spaces, and existing works rely on quite specific assumptions. We are aware of \cite[Theorem 3.2]{ambrosio}, which proves stability bounds around sufficiently regular transport maps in Riemannian manifolds, in the spirit of \cite{gigli}. Also, in the recent paper \cite[Lemma 7, Appendix A.2]{sphere} extends the proof technique of \cite{merigot} to obtain a quantitative stability bound when $\rho$ is uniform on the sphere with exponent $1/9$. Our general bound \eqref{e:mapsstab} applies in particular to  this case, and improves the exponent to $1/6$. Finally, we mention that the results in \cite[Sections 5, 6, and 7]{KitagawaWarren12} and \cite[Sections 5 and 6]{JeongKitagawa25} show quantitative stability of transport maps in the uniform norm for more regular measures that are sufficiently close to the source measure (i.e., for transport maps close to the identity map), in the setting of cost function given by Euclidean distance squared restricted to the sphere or the boundary of a $C^1$ convex body, respectively; the exponents in both results depend on dimension and are likely non-sharp.

\subsection{Proof strategy and organization of the paper.}

In the present paper, we build on the general idea introduced in \cite{merigot} and \cite{delalandemerigot} of studying a particular functional $\mathcal{K}_\rho$ defined over the set of dual potentials $\psi\in\mathcal{C}^0(\Y)$ whose convexity/concavity properties translate into quantitative stability bounds for Kantorovich potentials. As in \cite{merigot} and \cite{mischler}, we actually study a family of ``regularized functionals" $\mathcal{K}_\rho^\varepsilon$, indexed by a parameter $\varepsilon>0$, and related to entropic (or regularized) optimal transport. We prove their strong concavity under the assumption that a strongly convex function exists on the manifold. This strong assumption is very restrictive; it is known to hold in sufficiently small balls, but strongly convex functions defined globally do not exist in general. As a consequence, the above arguments alone would only give a quantitative stability bound for $\rho$ supported in a sufficiently small ball of a Riemannian manifold. To overcome this difficulty, we then adapt the gluing techniques developed in \cite{letrouitmerigot} and manage to handle any measure $\rho$ bounded above and below on its support, assumed to be a John domain in a possibly non-compact Riemannian manifold $M$.

In Section \ref{s:convkantoreg} we define the Kantorovich regularized functional $\mathcal{K}_\rho^\varepsilon$, defined on $\mathcal{C}^0(\mathcal{Y})$ and depending on a ``regularization parameter" $\varepsilon$. We prove the strong concavity of $\mathcal{K}_\rho^\varepsilon$. This first part does not require $\rho$ to be absolutely continuous.

In Section \ref{s:existunique} we recall known facts about optimal transport in Riemannian manifolds. The strong concavity of $\mathcal{K}_\rho^\varepsilon$ is a key ingredient in Section \ref{s:stabbrpot} to prove the quantitative stability of Kantorovich potentials \eqref{e:varinrho0}. The other key ingredient is an adaptation of the gluing arguments of \cite{letrouitmerigot} to the setting of Riemannian manifolds. Then, in Section \ref{s:stabmapmanifo}, we deduce the stability of optimal transport maps \eqref{e:mapsstab}, relying on a 1 dimensional reverse Poincaré inequality and an integral geometric formula in Riemannian manifolds inspired by the Crofton formula.

\subsection{Notation.}
We denote by $\chi_S$ the characteristic function of a set $S$. In a metric space, the ball with center $x$ and radius $r$ is denoted by $B(x,r)$. Given a ball $Q$ and $r>0$, the ball with same center as $Q$ and radius multiplied by $r$ is denoted by $rQ$. The diameter of a set $S$ is denoted by ${\rm diam}(S)$.

In a Riemannian manifold $(M,g)$, we denote by $|v|_{g_x}$ the norm of $v\in T_xM$. The Riemannian distance between $x,y\in M$ is denoted by $\dist(x,y)$, and by extension $\dist(\cdot,\cdot)$ is also used to denote the distance in $M$ between a point and a set, or between two sets. The Ricci tensor is denoted by ${\rm Ric}$. The Hessian operator at $x\in M$ of a twice differentiable function $V:M\rightarrow \R$ is the
linear operator $\nabla^2V(x) : T_xM \rightarrow T_xM$ defined by the identity $\nabla^2V(x) v = \nabla_v(\nabla V)$, where $\nabla_v$ stands for the covariant derivative in the direction $v$. 

The $p$-Wasserstein distance between two probability measures on $M$ is denoted by $W_p(\cdot,\cdot)$, and the variance of a real-valued function $f$ on a space $\X$ equipped with a probability measure $\rho$ is 
$$
\Var_\rho(f):=\int_{\X}\left(f-\int_{\X}f\dd \rho\right)^2\dd\rho,
$$
The duality pairing between a measure $\mu$ and a continuous function $f$ is written $\sca{\mu}{f}$.

\subsection{Acknowledgments.} We would like to warmly thank Antoine Julia for his invaluable help in proving Proposition \ref{p:zelditch}. We are also grateful to  Pierre Pansu for a very interesting discussion related to this work. CL and QM acknowledge the support of the Agence nationale de la recherche, through the PEPR PDE-AI project (ANR-23-PEIA-0004). JK was partially supported by National Science Foundation grant DMS-2246606.

\section{Strong concavity of the Kantorovich regularized functionals}\label{s:convkantoreg}

In this section, we introduce and study a family of functionals indexed by a parameter $\varepsilon>0$, which we call the Kantorovich regularized functionals. Some of the computations presented in this section are similar to computations found in \cite{delalande}, \cite{chizat} and \cite{mischler}.

Let $(U,d)$ be a metric space and $Y$ a topological space. Additionally, assume $\rho$ and $\sigma$ are Borel probability measures on $U$ and $Y$ respectively. In the sequel we also make the assumption that $Y$ is compact, and the cost $c:U\times Y\rightarrow \mathbb{R}$ is bounded on $U\times Y$. Note that we make no further assumption on the measures $\rho$ and $\sigma$, including any kind of absolute continuity with respect to Hausdorff measure. We will make some additional assumptions in Section~\ref{sec: concavity of K}, and then also in Section~\ref{sec: potential stability proof}.

\subsection{The Kantorovich regularized functional and its first two derivatives}\label{sec: kantorovich functional}

Given $\eps>0$, and
$\psi\in\Class^0(Y)$, we define the $\eps$-regularized $c$-transform for $x\in U$ by
$$\psi^{c,\eps}(x) := -\eps \log\left(\int_Y \exponentiate{-\frac{c(x,y) - \psi(y)}{\eps}}\dd\sigma(y)\right), $$
and the $\eps$-regularized Kantorovich functional as 
$$ \Kant_\rho^\eps(\psi) := \int_U \psi^{c,\eps}(x)\dd\rho(x).$$
Since $c$ and $\psi$ are both bounded and $\rho$ and $\sigma$ are probability measures, we see $\psi^{c, \eps}(x)$ and $\Kant_\rho^\eps(\psi)$ are finite valued for all $x\in U$ and $\psi\in \Class^0(Y)$ respectively.

To each potential $\psi\in\Class^0(Y)$ and any point $x\in U$, we associate a probability density $\hat{\mu}_\eps^x[\psi]$ (with
respect to $\sigma$) and the corresponding probability measure
$\mu_\eps^x[\psi]$ on $Y$: 
\begin{equation}\label{e:muepschap}
\mu_\eps^x[\psi] := \hat{\mu}_\eps^x[\psi] \dd\sigma, \qquad \hat{\mu}_\eps^x[\psi] := \frac{\exponentiate{-\frac{c(x,y) - \psi(y)}{\eps}}}
{\int_Y \exponentiate{-\frac{c(x,z) - \psi(z)}{\eps}} \dd\sigma(z)}.
\end{equation}
We also define a probability measure $\mu_\eps[\psi]$ on $Y$ by integrating $\mu_\eps^x[\psi]$ with respect to $\rho(x)$:
\begin{equation}\label{e:mueps}
 \forall v \in\Class^0(Y), \quad \sca{\mu_\eps[\psi]}{v} := \int_U \sca{\mu_\eps^x[\psi]}{v} \dd\rho(x).
\end{equation}

\begin{remark}[Limit as  $\eps \to 0$]\label{r:convergence}
Note that if for a given $x\in U$, the minimum in the definition of the $c$-transform
\begin{equation}\label{e:mincpsi}
\psi^c(x)=\min_{y\in\Y} (c(x,y) - \psi(y))
\end{equation}
(see \eqref{s:stabbrpot} below) is attained at a unique point $y_\psi(x)$, then 
$$ \lim_{\eps\to 0}\mu_\eps^x[\psi] = \delta_{y_\psi(x)}. $$
This is the case for almost all $x$ when $c(x,y)=\frac12 \dist(x,y)^2$ is the quadratic cost on a Riemannian manifold (see \cite{mccann}, whose results are partly recalled in Theorem \ref{t:mccann}).
At least formally, $\mu_\eps[\psi]$ is the analogue of
$$ y_{\psi\#} \rho = \int_U \delta_{y_\psi(x)} \dd\rho(x).$$
Assuming that the support of $\sigma$ coincides with $\Y$, one can see that $\psi^{c,\eps}$ converges pointwise to $\psi^c$ as $\eps\to 0$ for any $\psi\in\Class^0(Y)$, and $\Kant_\rho^\eps(\psi) \rightarrow \Kant_\rho(\psi)$ as $\eps\to 0$.
\end{remark}

\begin{definition}[Gradient and Hessian]
Let $\mathcal{F}: \mathcal{C}^0(Y)\rightarrow \R$. If it exists, we define the gradient of
$\mathcal{F}$ at $\psi \in \mathcal{C}^0(Y)$ as the unique measure $\nabla \mathcal{F}(\psi)$ such that for all $v\in\Class^0(Y)$, $$\left.\frac{\dd}{\dd t} \mathcal{F}(\psi+tv)\right\vert_{t=0} =\sca{\nabla \mathcal{F}(\psi)}{v}.$$ Given $\psi\in\Class^0(Y)$ and $v\in\Class^0(Y)$, we denote the second-order derivative of $\mathcal{F}(\psi+tv)$ as $$ \sca{\D^2\mathcal{F}(\psi)v}{v} := \lim_{t\to 0} \sca{t^{-1}\left(\nabla \mathcal{F}(\psi+tv) - \nabla \mathcal{F}(\psi)\right)}{v}$$  when this limit exists.
\end{definition}
\begin{lemma}[Gradient of $\Kant_\rho^\eps$]\label{l:gradient}
The smoothed Kantorovich functional $\Kant_\rho^\eps$ is differentiable at any $\psi\in\Class^0(Y)$, and
 $$\nabla \Kant_\rho^\eps(\psi) =- \mu_\eps[\psi].$$
\end{lemma}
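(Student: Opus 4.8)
\emph{Proof strategy.} Fix $\psi\in\Class^0(Y)$ and a test direction $v\in\Class^0(Y)$. The plan is to show that $t\mapsto \Kant_\rho^\eps(\psi+tv)$ is differentiable at $t=0$ with derivative $-\sca{\mu_\eps[\psi]}{v}$, and that $\mu_\eps[\psi]$ is the unique measure representing this linear functional; the differentiability being then automatic in $v$. Concretely, I would differentiate the integrand $(\psi+tv)^{c,\eps}(x)$ pointwise in $x$, obtain a bound on its $t$-derivative that is uniform in $x$, and conclude with the dominated convergence theorem.

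First I would work at a fixed $x\in U$. Set $f_x(t) := \int_Y \exponentiate{-\tfrac{c(x,y)-\psi(y)-tv(y)}{\eps}}\dd\sigma(y)$. Since $Y$ is compact, $\sigma$ is a probability measure, and $c$, $\psi$, $v$ are bounded, for $|t|\le 1$ the integrand and its partial derivative in $t$, namely $\tfrac{v(y)}{\eps}\exponentiate{-\tfrac{c(x,y)-\psi(y)-tv(y)}{\eps}}$, are bounded above and below by positive constants depending only on $\eps$, $\|c\|_\infty$, $\|\psi\|_\infty$, $\|v\|_\infty$. Differentiation under the integral sign then gives that $f_x$ is $\Class^1$ on $(-1,1)$ with $f_x>0$ and $f_x'(t)=\tfrac1\eps\int_Y v(y)\exponentiate{-\tfrac{c(x,y)-\psi(y)-tv(y)}{\eps}}\dd\sigma(y)$. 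Hence $g_x(t):=(\psi+tv)^{c,\eps}(x)=-\eps\log f_x(t)$ is differentiable, and by the very definition \eqref{e:muepschap} of $\hat\mu_\eps^x$,
$$ g_x'(t) = -\eps\,\frac{f_x'(t)}{f_x(t)} = -\int_Y v(y)\,\dd\mu_\eps^x[\psi+tv](y). $$
In particular $g_x'(0)=-\sca{\mu_\eps^x[\psi]}{v}$, and since $\mu_\eps^x[\psi+tv]$ is a probability measure, $|g_x'(t)|\le \|v\|_\infty$ for all $x\in U$ and all $|t|<1$.

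Next I would globalize. By the mean value theorem, $|t^{-1}(g_x(t)-g_x(0))|\le \|v\|_\infty$ for every $x$ and every $0<|t|<1$, while $t^{-1}(g_x(t)-g_x(0))\to g_x'(0)$ as $t\to0$ for each $x$. As the constant $\|v\|_\infty$ is $\rho$-integrable, the dominated convergence theorem yields
$$ \lim_{t\to0}\frac{\Kant_\rho^\eps(\psi+tv)-\Kant_\rho^\eps(\psi)}{t} = \int_U g_x'(0)\,\dd\rho(x) = -\int_U \sca{\mu_\eps^x[\psi]}{v}\,\dd\rho(x) = -\sca{\mu_\eps[\psi]}{v}, $$
the last equality being the definition \eqref{e:mueps} of $\mu_\eps[\psi]$; the measurability in $x$ needed here is exactly that already implicit in the definition of $\Kant_\rho^\eps$. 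Finally, $\mu_\eps[\psi]$ is a Borel probability measure on the compact space $Y$, hence a Radon measure, hence uniquely determined by its pairing against all $v\in\Class^0(Y)$; so it is the gradient in the sense of the preceding definition and $\nabla \Kant_\rho^\eps(\psi)=-\mu_\eps[\psi]$.

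\emph{Main obstacle.} There is no substantial difficulty; the only points needing care are the two interchanges of limit and integral (over $Y$, then over $U$). Both succeed because $\eps>0$ is fixed, so the exponential weights stay uniformly comparable to $1$ on the compact $Y$, which gives the uniform-in-$(x,t)$ estimate $|g_x'(t)|\le\|v\|_\infty$. That single bound simultaneously supplies the dominating function for dominated convergence and shows $g_x$ is Lipschitz in $t$ with a constant independent of $x$.
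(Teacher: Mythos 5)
Your proof is correct and follows essentially the same route as the paper: differentiate the inner integral over $Y$ by dominated convergence, deduce the uniform bound $|g_x'(t)|\le\|v\|_\infty$, and then pass the derivative through the $\rho$-integral (the paper does this directly, you via the mean value theorem plus dominated convergence, which is the same mechanism). No gaps.
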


\begin{proof}
  Let $\psi_t = \psi+tv$, then for any $t$,
  \begin{align*}
      \left \lvert\frac{\dd}{\dd t}\exponentiate{-\frac{c(x,y)- \psi_t(y)}{\eps}}\right\rvert
      &\leq \frac{\lVert v\rVert_\infty}{\eps} \exponentiate{-\frac{c(x,y)-\psi(y)}{\eps}}\exponentiate{\frac{\lVert v\rVert_\infty}{\eps}}.
  \end{align*}
  Since $\lVert \psi\rVert_\infty$ and $\lVert v\rVert_\infty$ are finite, we can use dominated convergence to interchange differentiation and integration to compute
  \begin{equation}\label{eq:diffpsieps}
  \begin{aligned}
    \frac{\dd}{\dd t} \psi_t^{c,\eps}(x) &=- \eps \frac{\dd}{\dd t} \log\left(\int_Y  \exponentiate{-\frac{c(x,y)- \psi_t(y)}{\eps}} \dd\sigma(y)\right) \\
&=- \eps \frac{\frac{\dd}{\dd t}\int_Y \exponentiate{-\frac{c(x,y)- \psi_t(y)}{\eps}}  \dd\sigma(y)}{\int_Y \exponentiate{-\frac{c(x,y)- \psi_t(y)}{\eps}}  \dd\sigma(y)}\\
&=-  \frac{\int_Y v(y)\exponentiate{-\frac{c(x,y)- \psi_t(y)}{\eps}}  \dd\sigma(y)}{\int_Y \exponentiate{-\frac{c(x,y)- \psi_t(y)}{\eps}}  \dd\sigma(y)}.
  \end{aligned}
  \end{equation}
In particular this implies
  \begin{align}\label{eqn: psi derivative bounded}
    \Bigl\lvert\frac{\dd}{\dd t} \psi_t^{c,\eps}(x)\Bigr\rvert &=\frac{\lvert\int_Y v(y)\exponentiate{-\frac{c(x,y)- \psi_t(y)}{\eps}}  \dd\sigma(y)\rvert}{\int_Y \exponentiate{-\frac{c(x,y)- \psi_t(y)}{\eps}}  \dd\sigma(y)}
    \leq \lVert v\rVert_\infty.
  \end{align}
  Thus, by differentiating under the integral defining $\Kant_\rho^\eps(\psi)$ and setting $t=0$, we conclude that
\begin{equation*}
  \left.\frac{\dd}{\dd t} \Kant_\rho^\eps(\psi+tv)\right\vert_{t=0}=  \int_U \left(\left.\frac{\dd}{\dd t} \psi_t^{c,\eps}(x)\right\vert_{t=0}\right) \dd \rho(x)= - \int_U \sca{\mu_\eps^x[\psi]}{v} \dd\rho(x)= -\sca{\mu_\eps[\psi]}{v}.\qedhere
\end{equation*}
\end{proof}

\begin{lemma}\label{l:hesskanto}
 The regularized Kantorovich functional $\Kant_\rho^\eps$ is twice differentiable at
 any $\psi\in\Class^0(Y)$, and for any $v\in \Class^0(Y)$,
$$
\sca{\D^2\Kant_\rho^\eps(\psi)v}{v} = -\frac{1}{\eps} \int_{U}\Var_{\mu^x_\eps[\psi]}(v) \dd\rho(x).
$$
\end{lemma}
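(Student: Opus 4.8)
The plan is to differentiate the gradient formula $\nabla\Kant_\rho^\eps(\psi) = -\mu_\eps[\psi]$ established in Lemma~\ref{l:gradient}. By the definition of $\D^2$, it suffices to prove that $t \mapsto \sca{\mu_\eps[\psi+tv]}{v}$ is differentiable at $t=0$ with
$$
\left.\frac{\dd}{\dd t}\sca{\mu_\eps[\psi+tv]}{v}\right\vert_{t=0} = \frac1\eps\int_U \Var_{\mu_\eps^x[\psi]}(v)\,\dd\rho(x),
$$
since then $\sca{\D^2\Kant_\rho^\eps(\psi)v}{v} = -\lim_{t\to 0}\sca{t^{-1}(\mu_\eps[\psi+tv]-\mu_\eps[\psi])}{v}$ yields the stated identity.

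First I would work pointwise in $x\in U$. Write $\psi_t = \psi + tv$, $w_t(y) = \exponentiate{-\eps^{-1}(c(x,y)-\psi_t(y))}$ and $Z_t = \int_Y w_t\,\dd\sigma$, so that $\sca{\mu_\eps^x[\psi_t]}{v} = Z_t^{-1}\int_Y v\,w_t\,\dd\sigma$. Exactly as in the proof of Lemma~\ref{l:gradient} one has $\partial_t w_t = \eps^{-1}v\,w_t$, with an integrable bound that justifies differentiation under the integral sign, hence $\partial_t Z_t = \eps^{-1}\int_Y v\,w_t\,\dd\sigma$. The quotient rule then gives
$$
\frac{\dd}{\dd t}\sca{\mu_\eps^x[\psi_t]}{v} = \frac1\eps\left(\frac{\int_Y v^2 w_t\,\dd\sigma}{Z_t} - \left(\frac{\int_Y v\, w_t\,\dd\sigma}{Z_t}\right)^2\right) = \frac1\eps\,\Var_{\mu_\eps^x[\psi_t]}(v).
$$
This derivative is continuous in $t$, and since $\Var_{\mu_\eps^x[\psi_t]}(v) \le \sca{\mu_\eps^x[\psi_t]}{v^2} \le \lVert v\rVert_\infty^2$, it is bounded in absolute value by $\eps^{-1}\lVert v\rVert_\infty^2$ uniformly in $x$, $t$ (and $\psi$).

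Then I would integrate in $x$ and pass to the limit. By the mean value theorem and the uniform bound above, $\lvert t^{-1}(\sca{\mu_\eps^x[\psi_t]}{v} - \sca{\mu_\eps^x[\psi]}{v})\rvert \le \eps^{-1}\lVert v\rVert_\infty^2$ for every $x\in U$ and every small $t\ne 0$; since $\rho$ is a probability measure, this constant is $\rho$-integrable, so dominated convergence allows me to move $\lim_{t\to 0}$ inside $\int_U \cdot\,\dd\rho(x)$ and evaluate the pointwise derivative at $t=0$. This gives the displayed formula of the first paragraph, and hence the lemma.

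The computation itself is elementary; the only genuine point is the interchange of the $t$-limit with the integral over $U$, and what makes it painless is that the pointwise $t$-derivative is the variance of a function of sup-norm $\lVert v\rVert_\infty$, hence bounded by $\lVert v\rVert_\infty^2$ independently of $x$, $t$ and $\psi$ — the same mechanism used to control the first derivative in Lemma~\ref{l:gradient}.
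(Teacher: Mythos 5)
Your proposal is correct and follows essentially the same route as the paper: both differentiate $\sca{\mu_\eps[\psi+tv]}{v}$ under the integral sign and identify the pointwise $t$-derivative as $\eps^{-1}\Var_{\mu_\eps^x[\psi_t]}(v)$. The only (harmless) difference is organizational — you perform the dominated convergence in two stages, first in $y$ for fixed $x$ and then in $x$ using the clean uniform bound $\Var_{\mu_\eps^x[\psi_t]}(v)\le\lVert v\rVert_\infty^2$, whereas the paper dominates the integrand once over $\rho\otimes\sigma$.
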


\begin{proof} Again let $\psi_t = \psi+tv$. Computing the derivative of the density $\hat{\mu}^x_\eps[\psi_t](y)$ for a fixed $(x, y)\in U\times Y$, we have
\begin{equation}\label{e:cxy}
- \frac{\dd}{\dd t} \hat{\mu}^x_\eps[\psi_t](y)=- \frac{\dd}{\dd t} \frac{\exponentiate{-\frac{c(x,y) - \psi_t(y)}{\eps}}}{\int_Y \exponentiate{-\frac{c(x,z) - \psi_t(z)}{\eps}} \dd\sigma(z)}  = - \frac{v(y)}{\eps} \hat{\mu}_\eps^x[\psi_t](y) + \frac{1}{\eps} \hat{\mu}^x_\eps[\psi_t](y) \sca{\mu^x_\eps[\psi_t]}{v},
\end{equation}
hence
\begin{align*}
   \left\lvert v(y) \frac{\dd}{\dd t} \hat{\mu}^x_\eps[\psi_t](y)\right\rvert
   &\leq \frac{2\lVert v\rVert_\infty^2}{\eps} \hat{\mu}_\eps^x[\psi_t](y)
   \leq \frac{2\lVert v\rVert_\infty^2 \exponentiate{\frac{2\lVert v\rVert_\infty}{\eps}}\exponentiate{-\frac{c(x, y)-\psi(y)}{\eps}}}{\eps\int_Y \exponentiate{-\frac{c(x, z)-\psi(z)}{\eps}}d\sigma(z)}\\
   &=\frac{2\lVert v\rVert_\infty^2 \exponentiate{\frac{2\lVert v\rVert_\infty}{\eps}}\hat \mu^x_\eps[\psi](y)}{\eps}.
\end{align*}
Since $\rho$ is a probability measure and $\lVert v\rVert_\infty<\infty$, the last expression above belongs to $L^1(\rho\otimes \sigma)$, hence we may again use dominated convergence to differentiate under the integral and use~\eqref{e:cxy} to obtain
\begin{align*}
  \left.\frac{\dd}{\dd t} \sca{\nabla \Kant_\rho^\eps(\psi_t)}{v}\right\vert_{t=0}
  &= -\left.\frac{\dd}{\dd t}\int_{U}  \int_Y v(y)\hat\mu_\eps^x[\psi_t](y)\dd\sigma(y)\dd\rho(x)\right\vert_{t=0}\\
  &=-\frac{1}{\eps}\int_U\int_Yv(y)\left(v(y) \hat{\mu}_\eps^x[\psi](y) -  \hat{\mu}^x_\eps[\psi](y) \sca{\mu^x_\eps[\psi]}{v}\right)\dd\sigma(y)\dd\rho(x)\\
&= -\frac{1}{\eps} \int_{U} \left(\sca{\mu^x_\eps[\psi]}{v^2} - \sca{\mu^x_\eps[\psi]}{v}^2 \right)\dd\rho(x) = -\frac{1}{\eps} \int_{U}\Var_{\mu^x_\eps[\psi]}(v) \dd\rho(x). \qedhere
\end{align*}
\end{proof}

\subsection{The functional $\mathcal{I}_\rho^\varepsilon$}\label{sec: I_rho}
To prove strong concavity of $\mathcal{K}_\rho^\varepsilon$, we first define and study another functional $\mathcal{I}_\rho^\varepsilon$, which is similar to a functional used by Cordero-Erausquin and Klartag \cite{cordero2015moment} to study moment measures. Given a probability measure $\rho$, we set 
$$ \IK_\rho^\eps(\psi) := \log\left(\int_U \exponentiate{\psi^{c,\eps}(x)} \dd \rho(x)\right).$$
Given $\psi \in\Class^0(Y)$, we consider the Gibbs density associated to 
$\psi^{c,\eps}$ denoted by $\hat \rho_\eps[\psi]$, and  the associated Gibbs measure $\rho_\eps[\psi]$, i.e. 
\begin{align*}
\hat \rho_\eps[\psi](x) := \frac{\exponentiate{\psi^{c,\eps}(x)}}{\int_U \exponentiate{\psi^{c,\eps}(z)}\dd\rho(z)},\qquad
\rho_\eps[\psi]:= \hat \rho_\eps[\psi]\dd\rho.
\end{align*}
Note that $\rho_\eps[\psi]$ is a probability measure, and again by boundedness of $c$ and $\psi$, and since $\rho$ is a probability measure, $\IK_\rho^\eps(\psi)$ and $\hat \rho_\eps[\psi]$ are finite.

\begin{proposition} \label{p:deriI} The functional $\mathcal{I}_\rho^\varepsilon$ is twice differentiable at any $\psi\in\mathcal{C}^0(Y)$, and its first and second derivatives in the direction $v\in\mathcal{C}^0(Y)$ are given by
\begin{align}
\sca{\nabla \IK_\rho^\eps(\psi)}{v} & =\int_{U} \sca{\mu_\eps^x[\psi]}{v}\dd\rho_\eps[\psi](x)\\
\sca{\D^2 \IK_\rho^\eps(\psi) v}{v} &= \Var_{\rho_\eps[\psi]}(x\mapsto \sca{\mu_\eps^x[\psi]}{v} )- \frac{1}{\varepsilon} \int_U \Var_{\mu_\eps^x[\psi]}(v) \dd\rho_\eps[\psi](x).
\end{align}
\end{proposition}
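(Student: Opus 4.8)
The plan is to regard $\IK_\rho^\eps$ as a log-Laplace (cumulant-generating) functional of the regularized potential $x\mapsto\psi^{c,\eps}(x)$ against $\rho$, and to differentiate it twice under the integral sign, recycling two computations already made in Section~\ref{sec: kantorovich functional}. Throughout, fix $\psi,v\in\Class^0(Y)$ and write $\psi_t=\psi+tv$. The first ingredient is the identity \eqref{eq:diffpsieps} from the proof of Lemma~\ref{l:gradient}, namely $\frac{\dd}{\dd t}\psi_t^{c,\eps}(x)=-\sca{\mu_\eps^x[\psi_t]}{v}$, which by \eqref{eqn: psi derivative bounded} is bounded by $\lVert v\rVert_\infty$ uniformly in $(t,x)$. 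The second ingredient, obtained by integrating \eqref{e:cxy} against $v\,\dd\sigma$ exactly as in the proof of Lemma~\ref{l:hesskanto}, is $\frac{\dd}{\dd t}\sca{\mu_\eps^x[\psi_t]}{v}=\frac1\eps\bigl(\sca{\mu_\eps^x[\psi_t]}{v^2}-\sca{\mu_\eps^x[\psi_t]}{v}^2\bigr)=\frac1\eps\Var_{\mu_\eps^x[\psi_t]}(v)$, which is bounded by $\lVert v\rVert_\infty^2/\eps$ uniformly in $(t,x)$.

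For the first derivative, note that boundedness of $c,\psi$ and the probability normalization of $\sigma$ give $|\psi_t^{c,\eps}(x)|\le\lVert c\rVert_\infty+\lVert\psi\rVert_\infty+|t|\lVert v\rVert_\infty$, so $x\mapsto e^{\psi_t^{c,\eps}(x)}$ is bounded above and below by positive constants uniformly for $|t|\le1$; together with the first ingredient this licenses differentiating $t\mapsto\log\!\bigl(\int_U e^{\psi_t^{c,\eps}(x)}\dd\rho(x)\bigr)$ under the integral by dominated convergence. The chain rule then gives $\frac{\dd}{\dd t}\IK_\rho^\eps(\psi_t)=\int_U\bigl(\tfrac{\dd}{\dd t}\psi_t^{c,\eps}(x)\bigr)\dd\rho_\eps[\psi_t](x)$; substituting the first ingredient and unfolding the definition of $\rho_\eps[\psi]$ at $t=0$ produces the first-derivative formula of the statement, and exhibits $\nabla\IK_\rho^\eps(\psi)$ as the mixture of the measures $\mu_\eps^x[\psi]$ against the Gibbs measure $\rho_\eps[\psi]$ — the exact analogue of $\nabla\Kant_\rho^\eps(\psi)=-\mu_\eps[\psi]$ with $\rho$ replaced by $\rho_\eps[\psi]$.

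For the second derivative one differentiates $g(t):=\sca{\nabla\IK_\rho^\eps(\psi_t)}{v}$ at $t=0$. Writing $g(t)$ as the quotient $\dfrac{\int_U\bigl(\tfrac{\dd}{\dd t}\psi_t^{c,\eps}(x)\bigr)e^{\psi_t^{c,\eps}(x)}\dd\rho(x)}{\int_U e^{\psi_t^{c,\eps}(x)}\dd\rho(x)}$ and applying the quotient rule at $t=0$: the numerator's $t$-derivative uses the product rule together with \emph{both} ingredients (the $t$-derivative of $\tfrac{\dd}{\dd t}\psi_t^{c,\eps}(x)=-\sca{\mu_\eps^x[\psi_t]}{v}$ being $-\frac1\eps\Var_{\mu_\eps^x[\psi_t]}(v)$, and that of $e^{\psi_t^{c,\eps}(x)}$ being $-\sca{\mu_\eps^x[\psi_t]}{v}\,e^{\psi_t^{c,\eps}(x)}$), while the denominator's $t$-derivative uses only the first ingredient. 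Collecting the terms at $t=0$: the $\Var_{\mu_\eps^x[\psi_t]}(v)$ contribution integrates to $-\frac1\eps\int_U\Var_{\mu_\eps^x[\psi]}(v)\dd\rho_\eps[\psi](x)$, and the remaining terms reorganize (this is just $\Var=\mathbb{E}[X^2]-\mathbb{E}[X]^2$ with $X(x)=\sca{\mu_\eps^x[\psi]}{v}$ under $\rho_\eps[\psi]$) into $\int_U\sca{\mu_\eps^x[\psi]}{v}^2\dd\rho_\eps[\psi](x)-\bigl(\int_U\sca{\mu_\eps^x[\psi]}{v}\dd\rho_\eps[\psi](x)\bigr)^2=\Var_{\rho_\eps[\psi]}\bigl(x\mapsto\sca{\mu_\eps^x[\psi]}{v}\bigr)$, which is the claimed formula. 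Equivalently, this is the classical identity $g''(0)=\Var_{\rho_0}(\dot f_0)+\mathbb{E}_{\rho_0}[\ddot f_0]$ for $g(t)=\log\int e^{f_t}\dd\rho$ applied to $f_t=\psi_t^{c,\eps}$, with $\dot f_0(x)=-\sca{\mu_\eps^x[\psi]}{v}$ and $\ddot f_0(x)=-\frac1\eps\Var_{\mu_\eps^x[\psi]}(v)$ supplied by the two ingredients above.

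I do not expect any substantial obstacle: all the content is already contained in the two derivative identities carried over from Lemmas~\ref{l:gradient} and~\ref{l:hesskanto}, combined with the elementary calculus of log-Laplace functionals. The only point that requires care — and it is genuinely routine here — is the repeated justification of differentiation under $\int_U$, uniformly for $t$ in a neighborhood of $0$. As in those lemmas, this follows from the boundedness of $c,\psi,v$ and the probability normalizations of $\rho,\sigma$: these make $e^{\psi_t^{c,\eps}(x)}$, $\sca{\mu_\eps^x[\psi_t]}{v}$ and $\Var_{\mu_\eps^x[\psi_t]}(v)$ all uniformly bounded for $|t|\le1$, so that every integrand and every $t$-derivative of an integrand appearing above is dominated by a constant, and dominated convergence applies.
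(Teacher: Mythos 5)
Your proof is correct and takes essentially the same route as the paper's: the same dominated-convergence justifications, the same two derivative identities \eqref{eq:diffpsieps} and \eqref{e:cxy}, with your log-Laplace identity $g''(0)=\mathbb{E}_{\rho_\eps[\psi]}[\ddot f_0]+\Var_{\rho_\eps[\psi]}(\dot f_0)$ being merely a cleaner packaging of the paper's quotient-rule computation. Note only that your calculation (like the paper's own proof) actually yields $\sca{\nabla \IK_\rho^\eps(\psi)}{v}=-\int_U\sca{\mu_\eps^x[\psi]}{v}\,\dd\rho_\eps[\psi](x)$, so the first-derivative formula in the statement is off by a sign (a typo in the paper, harmless for the second-derivative formula since the variance is invariant under $\dot f_0\mapsto-\dot f_0$, and only that formula is used later), and you should not assert that you recover the stated first-derivative formula verbatim.
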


\begin{proof} Let $\psi_t = \psi+tv$. By~\eqref{eqn: psi derivative bounded}, for any $x\in U$ we have
\begin{align*}
    \left\lvert\frac{\dd}{\dd t}\exponentiate{\psi_t^{c,\eps}(x)}\right\rvert
    &\leq \lVert v\rVert_\infty \exponentiate{\psi_t^{c,\eps}(x)}
    \leq \lVert v\rVert_\infty \exponentiate{\lVert v\rVert_\infty}\left(\int_Y \exponentiate{-\frac{c(x, z)-\psi(z)}{\eps}}d\sigma(z)\right)^{-\eps}\\
    &=\lVert v\rVert_\infty \exponentiate{\lVert v\rVert_\infty}\exponentiate{\psi^{c,\eps}(x)}.
\end{align*}
Since the last expression belongs to $L^1(\rho)$, we may apply dominated convergence to differentiate under the integral and obtain
  \begin{align*}
    \frac{\dd}{\dd t} \exponentiate{\IK_\rho^\eps(\psi_t)} &=      \frac{\dd}{\dd t}\int_U \exponentiate{\psi_t^{c,\eps}(x)} \dd \rho(x) = \int_U \left(\frac{\dd}{\dd t} \psi_t^{c,\eps}(x)\right) \exponentiate{\psi_t^{c,\eps}(x)} \dd \rho(x)\\
    &= - \int_U \sca{\mu_\eps^x[\psi_t]} {v}\exponentiate{\psi_t^{c,\eps}(x)}\dd \rho(x),
  \end{align*}
  thus,
  \begin{align*}
      \sca{\nabla \IK_\rho^\eps(\psi_t)}{v} 
      &= -\exponentiate{-\IK_\rho^\eps(\psi_t)}\int_U \sca{\mu_\eps^x[\psi_t] }{v}\exponentiate{\psi_t^{c,\eps}(x)}\dd \rho(x) \\
  &=- \int_U \int_Y v(y)\mu_\eps^x[\psi_t](y) \hat\rho_\eps[\psi_t](x)\dd\sigma(y)\dd \rho(x).
  \end{align*}
 To calculate the second derivative, first note that
  \begin{align}
    \frac{\dd}{\dd t}\hat \rho_\eps[\psi_t](x) &= \frac{\dd}{\dd t}\frac{\exponentiate{\psi_t^{c,\eps}(x)}}{\int_U \exponentiate{\psi_t^{c,\eps}(z)}\dd \rho(z)}\nonumber\\
    &=\hat\rho_\eps[\psi_t](x) \left(\int_U \sca{\mu_\eps^z[\psi_t]}{v} \dd\rho_\eps[\psi_t](z)
    -  \sca{\mu_\eps^x[\psi_t]}{v} \right)\label{e:ddtrhoeps}
  \end{align}
  where we used   \eqref{eq:diffpsieps}. Combining this with~\eqref{e:cxy}, we find
  \begin{align}
      \frac{\dd}{\dd t}\left(\hat{\mu}_\eps^x[\psi_t](y)\hat\rho_\eps[\psi_t](x)\right)
      &=\hat\rho_\eps[\psi_t](x)\left( \frac{v(y)}{\eps} \hat{\mu}_\eps^x[\psi_t](y) - \frac{1}{\eps} \hat{\mu}^x_\eps[\psi_t](y) \sca{\mu^x_\eps[\psi_t]}{v}\right.\notag\\
      &+\left.\int_U \sca{\mu_\eps^z[\psi_t]}{v} \dd\rho_\eps[\psi_t](z)
    -  \sca{\mu_\eps^x[\psi_t]}{v} \right)\label{eqn: rho hat derivative},
  \end{align}
  hence
  \begin{align*}
     \left\lvert \frac{\dd}{\dd t}\left(\hat{\mu}_\eps^x[\psi_t](y)\hat\rho_\eps[\psi_t](x)\right)\right\rvert
     &\leq 2\lVert v\rVert_\infty\hat\rho_\eps[\psi_t](x)\left( \frac{\hat{\mu}_\eps^x[\psi_t](y)}{\eps}  + 1 \right)\\
     &\leq 2\lVert v\rVert_\infty \exponentiate{2\lVert v\rVert_\infty}\hat\rho_\eps[\psi](x)\left( \frac{\exponentiate{\frac{2\lVert v\rVert_\infty}{\eps}}\hat \mu^x_\eps[\psi](y)}{\eps}  + 1 \right).
  \end{align*}
  Since this belongs to $L^1(\rho\otimes \sigma)$, we may again differentiate under the integral to find
  \begin{align*}
    \sca{\D^2 \IK_\rho^\eps(\psi) v}{v}= 
    \left.\frac{\dd}{\dd t} \sca{\nabla \IK_\rho^\eps(\psi_t)}{v} \right\vert_{t=0}
    &= -\int_U \int_Y v(y) \left(\left.\frac{\dd}{\dd t} \hat{\mu}_\eps^x[\psi_t](y)\right\vert_{t=0}\right)\dd\sigma(y) \dd\rho_\eps[\psi_t](x)\\
    &\qquad - \int_U \sca{\mu_\eps^x[\psi_t]}{v}\left(\left.\frac{\dd}{\dd t} \hat\rho_\eps[\psi_t](x)\right\vert_{t=0}\right)\dd \rho(x).
  \end{align*}
  As in the proof of Lemma~\ref{l:hesskanto}, the first term of the sum above is equal to
  $$ - \frac{1}{\eps} \int_U \Var_{\mu_\eps^x[\psi]}(v) \dd\rho_\eps[\psi](x), $$ 
  while from~\eqref{e:ddtrhoeps}, the second term is 
  \begin{align*}
     \int_U \sca{\mu_\eps^x[\psi]}{v}^2 \dd\rho_\eps[\psi](x) - \left( \int_U \sca{\mu_\eps^x[\psi]}{v} \dd\rho_\eps[\psi](x)\right)^2
    &=\Var_{\rho_\eps[\psi]}(x\mapsto\sca{\mu_\eps^x[\psi]}{v} ),
  \end{align*}
  finishing the proof.
\end{proof}
\begin{remark}
It is possible to extend the results of this subsection to the case of noncompact $Y$, as long as $Y$ is locally compact. In this case, one should first assume $\psi\in \Class^0(Y)$ and the cost $c$ are such that $\exponentiate{-\frac{c(x, \cdot)-\psi}{\eps}}\in L^1(\sigma)$ and $\exp(\psi^{c, \eps})\in L^1(\rho)$ to ensure $\psi^{c, \eps}$ and $\mathcal{I}^\eps_\rho$ are well-defined. Additionally, in the proof of Lemma~\ref{l:gradient}, and Lemma~\ref{l:hesskanto} and Proposition~\ref{p:deriI}, the function $v$ should be chosen in the class $\Class_0(Y)$ of continuous functions decaying to zero at the boundary of $Y$. This will allow interchanging integral and derivative at various points in the above proof, and is sufficient to characterize the first and second derivatives of $\mathcal{K}^\eps_\rho$ and $\mathcal{I}^\eps_\rho$ as elements of the dual space $\Class_0(Y)^*$.
\end{remark}
\subsection{Strong concavity of Kantorovich's regularized functional}\label{sec: concavity of K}
We prove strong concavity estimates for $\Kant_\rho^\eps$, with a constant that does not deteriorate as $\eps\to 0$. The main idea is to deduce strong concavity of $\Kant_\rho^\eps$ from mere concavity of $\mathcal{I}_\rho^\varepsilon$. We show concavity of $\mathcal{I}_\rho^\varepsilon$ and strong concavity of $\Kant_\rho^\eps$ under some assumptions (Assumption \ref{a:assumptions}) which will be studied in detail in Section \ref{s:proofmain}.

From now on we fix a Riemannian manifold $(M, g)$. Compared to the setting described at the beginning of Section \ref{s:convkantoreg}, the set $U$ is now a subset of $M$, and $Y$ is still a compact topological space. The cost $c$ is assumed bounded on $U\times Y$, but it is not necessary for the moment to assume that it is the squared distance cost on $M$. As before, $\rho$ and $\sigma$ are Borel probability measures on $U$ and $Y$ respectively. We also make the following assumptions:

\begin{assumption}\label{a:assumptions} We assume:
\begin{itemize}
\item $U$ is a geodesically convex subset of $M$.
\item {\rm (Semi-concave cost).} 
There exists $\lambda\geq 0$ such that for all $y \in Y$ and all $x_0$, $x_1 \in U$ and any minimizing geodesic $(x_t)_{t\in [0,1]}$ connecting $x_0$ to $x_1$, one has
    \begin{equation} \label{eq:semi-convexity-distance}
    c(x_t, y) \geq (1 - t) c(x_0, y) + t c(x_1, y) - \frac{\lambda (1 - t) t \dist(x_0, x_1)^2}{2}. 
    \end{equation} 
\item {\rm (Lower bound on $\infty$-Bakry--Emery tensor).} 
There exists $V\in C^2(U)$ such that 
\begin{align}\label{eqn: Bakry-Emery}
    {\rm Hess\ } V+{\rm Ric}\geq \lambda.
\end{align}
\end{itemize}
\end{assumption}

\begin{remark}
The main result of the present paper, Theorem \ref{t:stabpote2}, is only stated for the squared quadratic cost $c(x,y)=\frac12 d(x,y)^2$, which is known to be semi-concave, i.e., to satisfy \eqref{eq:semi-convexity-distance}. More generally, for $K$-SC spaces in the sense of \cite{savare}, the squared distance cost is semi-concave by definition.  Semi-concavity is also true for instance for $c(x,y)=d(x,y)^p$ with $p\geq 2$.
Also, as noticed in \cite[Remark 3.2]{chizat}, whenever $U$ and $Y$ are both compact subsets of a smooth manifold and $c$ is smooth, $c$ is $\|c\|_{C^2(U\times Y)}$-semi-concave.
\end{remark}

An important consequence of the assumption~\eqref{eqn: Bakry-Emery} above is the following.
\begin{theorem}[Weighted Pr{\'e}kopa--Leindler inequality, {\cite[Theorem 1.4]{cordero}}]\label{thm: weighted PL}
 Suppose that $U$ is a geodesically convex subset of a Riemannian manifold $M$ such that~\eqref{eqn: Bakry-Emery} holds in $U$. Also suppose $s\in[0,1]$ and $f$, $g$, $h:U\rightarrow\R_+$ are such that 
$$
\forall x_0, x_1\in U,\ z\in Z_s(x_0,x_1), \quad h(z)\geq \exponentiate{-\lambda s(1-s)\dist(x_0,x_1)^2/2}f^{1-s}(x_0)g^s(x_1)
$$
where $Z_s(x_0,x_1)$ is the barycenter between $x_0$ and $x_1$ given by
$$
Z_s(x_0,x_1)=\{z\in U \mid \dist(x_0,z)=s\dist(x_0,x_1) \text{\  and\  } \dist(z,x_1)=(1-s)\dist(x_0,x_1)\}.
$$
Then 
\begin{equation*}
\int_U h \dd \rho^V\geq \Bigl(\int_U f \dd \rho^V\Bigr)^{1-s} \Bigl(\int_U g \dd \rho^V\Bigr)^s
\end{equation*}
where $\dd \rho^V=\frac{1}{Z} \exponentiate{-V} \dd{\rm vol}$ and $Z=\int_U \exponentiate{-V} \dd{\rm vol}$ is a normalizing constant.
\end{theorem}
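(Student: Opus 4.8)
The plan is to run the classical optimal-transport proof of the Riemannian Pr\'ekopa--Leindler inequality (Cordero-Erausquin--McCann--Schmuckenschl\"{a}ger and its weighted refinements, as in \cite{cordero}): transport $f\dd\rho^V$ onto $g\dd\rho^V$, interpolate by displacement, and reduce the inequality to a pointwise Jacobian estimate which is exactly where \eqref{eqn: Bakry-Emery} enters. First I would dispose of the degenerate cases: since $h\ge 0$, if $F:=\int_U f\dd\rho^V$ or $G:=\int_U g\dd\rho^V$ vanishes the right-hand side is $0$ and there is nothing to prove, and a truncation/monotone-approximation argument lets me assume $0<F,G<\infty$. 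Set $\dd\mu_0=\frac1F f\dd\rho^V$ and $\dd\mu_1=\frac1G g\dd\rho^V$; these are probability measures with $\mu_0\ll\mathrm{vol}$, so McCann's theorem \cite{mccann} yields a $c$-concave $\varphi$ whose induced map $T(x)=\exp_x(-\nabla\varphi(x))$ pushes $\mu_0$ to $\mu_1$, is differentiable $\mu_0$-a.e., and satisfies the Monge--Amp\`ere relation $\frac{f(x)}{F}e^{-V(x)}=\frac{g(T(x))}{G}e^{-V(T(x))}\mathcal{J}(x)$ for $\mu_0$-a.e.\ $x$, where $\mathcal{J}(x)$ is the Jacobian of $T$ with respect to the Riemannian volume (in particular $\mathcal{J}>0$ $\mu_0$-a.e.). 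Since $U$ is geodesically convex, the displacement interpolation $T_s(x):=\exp_x(-s\nabla\varphi(x))$ stays in $U$ and, for $\mu_0$-a.e.\ $x$, follows the minimizing geodesic from $x$ to $T(x)$; hence $T_s(x)\in Z_s(x,T(x))$.

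Applying the hypothesis on $h$ with $(x_0,x_1,z)=(x,T(x),T_s(x))$ gives, $\mu_0$-a.e.,
\[
h(T_s(x))\;\ge\; e^{-\lambda s(1-s)\dist(x,T(x))^2/2}\,f(x)^{1-s}g(T(x))^s.
\]
Next, using the Jacobian equation for the displacement interpolant $\mu_s=(T_s)_\#\mu_0$ — legitimate because $T_s$ is $\mu_0$-a.e.\ injective (no crossing of optimal geodesics) — together with $h\ge 0$, one gets
\[
\int_U h\dd\rho^V\;\ge\;\int h(T_s(x))\,e^{V(x)-V(T_s(x))}\,\mathcal{J}_s(x)\dd\rho^V(x),
\]
where $\mathcal{J}_s(x)$ is the Jacobian of $T_s$. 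Inserting the pointwise bound on $h$, writing $\dd\rho^V=\frac{F}{f}\dd\mu_0$ on $\mathrm{supp}\,\mu_0$, and eliminating $g(T(x))$ via the Monge--Amp\`ere relation, everything collapses to
\[
\int_U h\dd\rho^V\;\ge\;F^{1-s}G^s\int_U\Phi_s\dd\mu_0,\qquad \Phi_s(x):=\frac{\mathcal{J}_s(x)}{\mathcal{J}(x)^{s}}\,e^{(1-s)V(x)+sV(T(x))-V(T_s(x))-\frac{\lambda}{2}s(1-s)\dist(x,T(x))^2}.
\]
Since $\mu_0$ is a probability measure, it then suffices to prove $\Phi_s(x)\ge 1$ for $\mu_0$-a.e.\ $x$.

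The remaining, and main, step is the \emph{weighted Jacobian concavity}: for $\mu_0$-a.e.\ $x$, the function $\Lambda_x(s):=\log\mathcal{J}_s(x)-V(T_s(x))$ satisfies
\[
\Lambda_x(s)\;\ge\;(1-s)\Lambda_x(0)+s\Lambda_x(1)+\frac{\lambda}{2}\,s(1-s)\,\dist(x,T(x))^2.
\]
Because $T_0=\mathrm{id}$ forces $\mathcal{J}_0\equiv 1$, so $\Lambda_x(0)=-V(x)$, while $\Lambda_x(1)=\log\mathcal{J}(x)-V(T(x))$, this rearranges to exactly $\log\Phi_s(x)\ge 0$, which closes the argument.

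To prove the weighted Jacobian concavity I would differentiate along the geodesic $\gamma_s:=T_s(x)$. Writing $\mathcal{J}_s(x)=\det A_s$ for the deformation (Jacobi-field) matrix $A_s$, its logarithmic derivative $U_s:=A_s'A_s^{-1}$ is \emph{symmetric} along an optimal displacement interpolation (it is the Hessian of the time-$s$ Kantorovich potential at $\gamma_s$), and the matrix Riccati equation $U_s'+U_s^2+R_s=0$ gives $\frac{d^2}{ds^2}\log\mathcal{J}_s(x)=-\mathrm{tr}(U_s^2)-\mathrm{Ric}(\dot\gamma_s,\dot\gamma_s)\le -\mathrm{Ric}(\dot\gamma_s,\dot\gamma_s)$; since $\gamma_s$ is a geodesic, $\frac{d^2}{ds^2}V(\gamma_s)=\langle\nabla^2V(\dot\gamma_s),\dot\gamma_s\rangle$, and $|\dot\gamma_s|^2\equiv\dist(x,T(x))^2$. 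Adding and invoking \eqref{eqn: Bakry-Emery} yields $\Lambda_x''(s)\le -\lambda\,\dist(x,T(x))^2$, whence the desired inequality by one-dimensional comparison. The main obstacle is making this rigorous: the optimal potential $\varphi$ is only twice differentiable $\mu_0$-a.e.\ (Alexandrov's theorem), so the Jacobi/Riccati computation must be carried out along $\mu_0$-a.e.\ geodesic with this limited regularity, and one must handle the possible degeneration $\mathcal{J}_s(x)\to 0$ as $s\to 1$ (conjugate/focal points, non-injectivity of $\exp$), using $\mathcal{J}(x)>0$ $\mu_0$-a.e.; these are precisely the technical points treated in the Riemannian mass-transport literature, e.g.\ \cite{cordero}.
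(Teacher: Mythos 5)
The paper does not actually prove this statement: it is imported verbatim from \cite[Theorem 1.4]{cordero}, with only the remark that the proof given there applies without modification when $U$ is a geodesically convex subset of $M$ rather than $M$ itself. Your sketch is precisely that proof --- transport $f\dd\rho^V$ onto $g\dd\rho^V$, interpolate by displacement (using geodesic convexity of $U$ to keep $T_s(x)\in Z_s(x,T(x))\subset U$, which is the one point the paper itself comments on), reduce to the weighted Jacobian concavity $\Lambda_x''\leq-\lambda\,\dist(x,T(x))^2$ via the Riccati equation and \eqref{eqn: Bakry-Emery} --- the algebra eliminating $g(T(x))$ through the Monge--Amp\`ere relation is correct, and the regularity caveats you flag (Alexandrov differentiability, symmetry of $U_s$, non-degeneracy of $\mathcal{J}_s$ for $s<1$) are exactly the technical points settled in the cited mass-transport literature; so the proposal is correct and follows the same route as the paper's source.
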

\begin{remark}
Theorem \ref{thm: weighted PL} is stated in \cite[Theorem 1.4]{cordero} only for $U=M$, however the proof given in \cite{cordero} works for $U$ any geodesically convex subset of $M$, without any modification.
\end{remark}

\begin{proposition} \label{p:Iconcave} 
Under Assumption \ref{a:assumptions}, the functional $\IK_{\rho^V}^\eps$ is concave.
\end{proposition}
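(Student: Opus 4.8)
The plan is to verify the concavity inequality for $\IK_{\rho^V}^\eps$ directly, by chaining three standard inequalities: semi-concavity of the cost, Hölder's inequality at the level of the regularized $c$-transform, and the weighted Prékopa–Leindler inequality (Theorem~\ref{thm: weighted PL}), which is what finally lets us pass to the integral against $\rho^V$.

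First I would fix $\psi_0,\psi_1\in\Class^0(Y)$ and $s\in[0,1]$, set $\psi_s:=(1-s)\psi_0+s\psi_1$, and establish the pointwise estimate: for all $x_0,x_1\in U$ and every $z\in Z_s(x_0,x_1)$,
\[
\psi_s^{c,\eps}(z)\;\geq\;(1-s)\,\psi_0^{c,\eps}(x_0)+s\,\psi_1^{c,\eps}(x_1)-\frac{\lambda\,s(1-s)\dist(x_0,x_1)^2}{2}.
\]
Such a $z$ lies on a minimizing geodesic from $x_0$ to $x_1$ inside $U$ (geodesic convexity), so by the semi-concavity assumption~\eqref{eq:semi-convexity-distance} we have $c(z,y)\geq(1-s)c(x_0,y)+s\,c(x_1,y)-\tfrac{\lambda s(1-s)\dist(x_0,x_1)^2}{2}$ for every $y\in Y$, hence
\[
\exponentiate{-\tfrac{c(z,y)-\psi_s(y)}{\eps}}\;\leq\;\exponentiate{\tfrac{\lambda s(1-s)\dist(x_0,x_1)^2}{2\eps}}\left(\exponentiate{-\tfrac{c(x_0,y)-\psi_0(y)}{\eps}}\right)^{1-s}\left(\exponentiate{-\tfrac{c(x_1,y)-\psi_1(y)}{\eps}}\right)^{s}.
\]
Integrating in $y$ against $\sigma$ and applying Hölder's inequality with conjugate exponents $\tfrac1{1-s}$ and $\tfrac1s$ bounds the right-hand side by $\exponentiate{\tfrac{\lambda s(1-s)\dist(x_0,x_1)^2}{2\eps}}$ times $\bigl(\int_Y\exponentiate{-(c(x_0,\cdot)-\psi_0)/\eps}\dd\sigma\bigr)^{1-s}\bigl(\int_Y\exponentiate{-(c(x_1,\cdot)-\psi_1)/\eps}\dd\sigma\bigr)^{s}$; taking $-\eps\log$ and unwinding the definition of $\psi^{c,\eps}$ gives the claimed estimate.

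Next I would exponentiate this estimate, so that the continuous positive functions $f:=\exponentiate{\psi_0^{c,\eps}}$, $g:=\exponentiate{\psi_1^{c,\eps}}$, $h:=\exponentiate{\psi_s^{c,\eps}}$ on $U$ satisfy $h(z)\geq\exponentiate{-\lambda s(1-s)\dist(x_0,x_1)^2/2}\,f(x_0)^{1-s}g(x_1)^{s}$ for all $z\in Z_s(x_0,x_1)$ — exactly the hypothesis of Theorem~\ref{thm: weighted PL}, applicable since $U$ is geodesically convex and~\eqref{eqn: Bakry-Emery} holds. Its conclusion is
\[
\int_U\exponentiate{\psi_s^{c,\eps}}\dd\rho^V\;\geq\;\left(\int_U\exponentiate{\psi_0^{c,\eps}}\dd\rho^V\right)^{1-s}\left(\int_U\exponentiate{\psi_1^{c,\eps}}\dd\rho^V\right)^{s},
\]
and taking logarithms gives $\IK_{\rho^V}^\eps(\psi_s)\geq(1-s)\IK_{\rho^V}^\eps(\psi_0)+s\,\IK_{\rho^V}^\eps(\psi_1)$, the desired concavity.

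I do not expect any serious obstacle here: the argument is a soft chaining of standard inequalities. The two points needing care are that all the integrals above are finite and strictly positive (guaranteed by the boundedness of $c$ on $U\times Y$ and of the $\psi_i$, together with $\sigma$ and $\rho^V$ being probability measures), and the bookkeeping of the exponents $s\leftrightarrow 1-s$ between the geodesic parameter in~\eqref{eq:semi-convexity-distance} and the barycenter sets $Z_s(x_0,x_1)$ in Theorem~\ref{thm: weighted PL}. Conceptually, all the real content sits in the weighted Prékopa–Leindler inequality: the Bakry–Émery-type bound~\eqref{eqn: Bakry-Emery}, combined with semi-concavity of the cost, is precisely what transports log-concavity of $\rho^V$ into concavity of $\IK^\eps_{\rho^V}$.
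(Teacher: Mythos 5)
Your proof is correct and follows essentially the same route as the paper's: semi-concavity of the cost at the level of $c(\cdot,y)-\psi_t(y)$, then the log-sum-exp convexity step (your Hölder with exponents $\tfrac1{1-s},\tfrac1s$ is exactly the paper's ``convexity of $v\mapsto\log\int_Y e^{v}\dd\sigma$''), then exponentiation and the weighted Prékopa--Leindler inequality. If anything, you are slightly more careful than the paper in noting that a point of $Z_s(x_0,x_1)$ lies on a minimizing geodesic contained in $U$ by geodesic convexity, which is exactly what is needed to invoke Theorem~\ref{thm: weighted PL}.
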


\begin{proof}[Proof of Proposition \ref{p:Iconcave}] 
Let $\psi_0$, $\psi_1 \in\Class^0(Y)$ and let $\psi_t = (1-t)\psi_0 + t\psi_1$. Given 
points  $y \in Y$ and $x_0$, $x_1 \in U$ and given a minimizing geodesic $(x_t)_{t\in [0,1]}$ one has according to \eqref{eq:semi-convexity-distance}
\begin{align*}
   c(x_t, y)-\psi_t(y)&\geq (1-t)(c(x_0, y)-\psi_0(y))+t(c(x_1, y)-\psi_1(y))-\frac{\lambda t(1-t)\dist(x_0, x_1)^2}{2}.
\end{align*}
Then
dividing the above inequality by $-\epsilon$, exponentiating, integrating over $Y$ against $\dd\sigma$, and finally  using convexity of the function  $v \in \Class^0(Y) \mapsto
\log(\int_Y \exponentiate{v(y)} \dd \sigma(y))$ we obtain
\begin{align*}
    \psi_t^{c, \epsilon}(x_t) \geq (1-t) \psi_0^{c, \epsilon}(x_0) + t \psi_1^{c, \epsilon}(x_1) - \frac{\lambda t(1-t) \dist(x_0, x_1)^2}{2}.
\end{align*}
Exponentiating yields 
\begin{align*}
    \exponentiate{\psi_t^{c, \epsilon}(x_t)}&\geq (\exponentiate{\psi_0^{c, \epsilon}(x_0)})^{1-t} (\exponentiate{\psi_1^{c, \epsilon}(x_1)})^{t}\exponentiate{- \frac{\lambda t(1-t) \dist(x_0, x_1)^2}{2}}.
\end{align*}    
By \eqref{eqn: Bakry-Emery} we may apply Theorem~\ref{thm: weighted PL}, which yields
\begin{align*}
     \int_U \exponentiate{\psi_t^{c, \epsilon}(x)} \d \rho^V(x)&\geq\left(\int_U\exponentiate{\psi_0^{c, \epsilon}(x)}\d \rho^V(x)\right)^{1-t}\left(\int_U\exponentiate{\psi_1^{c, \epsilon}(x)}\d \rho^V(x)\right)^{t},
\end{align*}
showing the concavity of $\IK_{\rho^V}^\eps$. 
\end{proof}
We finally deduce from the concavity of $\IK_{\rho^V}^\eps$ the strong concavity of $\Kant_{\rho^V}^\eps$.
\begin{proposition}\label{p:D2K}
Suppose that Assumption \ref{a:assumptions} holds and the cost $c$ has bounded oscillations, i.e. 
$$
\|c\|_\osc:=\sup_{x,x'\in U, \ y\in Y} (c(x,y)-c(x',y))<+\infty.
$$
Then
\begin{equation}\label{e:Keps}
\langle D^2\mathcal{K}^\varepsilon_{\rho^V}[\psi_t]v,v\rangle\leq -C_0^{-2}  \Var_{\rho^V}(x\mapsto \sca{\mu_\eps^x[\psi_t]}{v} )
\end{equation}
for $C_0:=\exp(\|c\|_{\osc})$. 
\end{proposition}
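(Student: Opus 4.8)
The plan is to combine the concavity of $\IK_{\rho^V}^\eps$ from Proposition \ref{p:Iconcave} with the explicit Hessian formulas of Lemma \ref{l:hesskanto} and Proposition \ref{p:deriI}, using the bounded-oscillation hypothesis to control the Gibbs density $\hat\rho_\eps[\psi]$ from below. First I would observe that Proposition \ref{p:deriI} gives
$$
0\geq \sca{\D^2 \IK_{\rho^V}^\eps(\psi_t) v}{v} = \Var_{\rho^V_\eps[\psi_t]}(x\mapsto \sca{\mu_\eps^x[\psi_t]}{v} )- \frac{1}{\varepsilon} \int_U \Var_{\mu_\eps^x[\psi_t]}(v) \dd\rho^V_\eps[\psi_t](x),
$$
which rearranges to
$$
\frac{1}{\varepsilon} \int_U \Var_{\mu_\eps^x[\psi_t]}(v) \dd\rho^V_\eps[\psi_t](x) \geq \Var_{\rho^V_\eps[\psi_t]}(x\mapsto \sca{\mu_\eps^x[\psi_t]}{v}).
$$
On the other hand, Lemma \ref{l:hesskanto} (applied with $\rho = \rho^V$) expresses $\langle D^2\mathcal{K}^\varepsilon_{\rho^V}[\psi_t]v,v\rangle$ as $-\frac{1}{\eps}\int_U \Var_{\mu^x_\eps[\psi_t]}(v)\dd\rho^V(x)$. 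So the task reduces to comparing $\int_U \Var_{\mu^x_\eps[\psi_t]}(v)\dd\rho^V(x)$ with $\int_U \Var_{\mu^x_\eps[\psi_t]}(v)\dd\rho^V_\eps[\psi_t](x)$, i.e.\ to replacing the Gibbs measure $\rho^V_\eps[\psi_t]$ by $\rho^V$ at the cost of a multiplicative constant.

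The key estimate is a pointwise two-sided bound on the density $\hat\rho_\eps[\psi_t](x) = \exp(\psi_t^{c,\eps}(x))/\int_U \exp(\psi_t^{c,\eps}(z))\dd\rho^V(z)$. From the definition of $\psi^{c,\eps}$ one checks that for $x,x'\in U$,
$$
\psi_t^{c,\eps}(x) - \psi_t^{c,\eps}(x') = -\eps\log\left(\frac{\int_Y \exp(-(c(x,y)-\psi_t(y))/\eps)\dd\sigma(y)}{\int_Y \exp(-(c(x',y)-\psi_t(y))/\eps)\dd\sigma(y)}\right),
$$
and since $|c(x,y)-c(x',y)|\leq \|c\|_\osc$ uniformly in $y$, each integrand ratio $\exp(-(c(x,y)-c(x',y))/\eps)$ lies in $[\exp(-\|c\|_\osc/\eps),\exp(\|c\|_\osc/\eps)]$, hence $|\psi_t^{c,\eps}(x)-\psi_t^{c,\eps}(x')|\leq \|c\|_\osc$. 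Therefore $\exp(\psi_t^{c,\eps}(x)) \geq \exp(-\|c\|_\osc)\int_U \exp(\psi_t^{c,\eps}(z))\dd\rho^V(z)$, which gives $\hat\rho_\eps[\psi_t](x)\geq C_0^{-1}$ for every $x\in U$, with $C_0 = \exp(\|c\|_\osc)$. (One gets the matching upper bound $\hat\rho_\eps[\psi_t]\leq C_0$ as well, though only the lower bound is needed here.) Consequently $\dd\rho^V_\eps[\psi_t] \geq C_0^{-1}\dd\rho^V$ as measures, and since $\Var_{\mu^x_\eps[\psi_t]}(v)\geq 0$,
$$
\int_U \Var_{\mu^x_\eps[\psi_t]}(v)\dd\rho^V(x) \geq C_0^{-1}\int_U \Var_{\mu^x_\eps[\psi_t]}(v)\dd\rho^V_\eps[\psi_t](x).
$$

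Chaining these together:
$$
\langle D^2\mathcal{K}^\varepsilon_{\rho^V}[\psi_t]v,v\rangle = -\frac{1}{\eps}\int_U \Var_{\mu^x_\eps[\psi_t]}(v)\dd\rho^V(x) \leq -\frac{C_0^{-1}}{\eps}\int_U \Var_{\mu^x_\eps[\psi_t]}(v)\dd\rho^V_\eps[\psi_t](x) \leq -C_0^{-1}\Var_{\rho^V_\eps[\psi_t]}(x\mapsto \sca{\mu_\eps^x[\psi_t]}{v}),
$$
where the last step is the rearranged concavity inequality. Finally one must pass from the variance against $\rho^V_\eps[\psi_t]$ back to the variance against $\rho^V$; using $\dd\rho^V_\eps[\psi_t]\geq C_0^{-1}\dd\rho^V$ once more together with the variational characterization $\Var_\mu(f) = \inf_{a\in\R}\int (f-a)^2\dd\mu$, one has $\Var_{\rho^V_\eps[\psi_t]}(f)\geq C_0^{-1}\int(f-\bar f)^2\dd\rho^V$ where $\bar f = \int f\dd\rho^V_\eps[\psi_t]$, and since $\int(f-\bar f)^2\dd\rho^V\geq \Var_{\rho^V}(f)$ by optimality of the $\rho^V$-mean, we get $\Var_{\rho^V_\eps[\psi_t]}(f)\geq C_0^{-1}\Var_{\rho^V}(f)$. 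Applying this with $f = (x\mapsto\sca{\mu_\eps^x[\psi_t]}{v})$ yields the claimed bound with constant $C_0^{-2}$. The only subtle point — and the main thing to get right — is this last two-sided manipulation of variances under an absolutely-continuous change of measure with bounded density; everything else is a direct substitution. I would also double-check that the bounded-oscillation hypothesis, rather than the weaker boundedness of $c$ already assumed, is genuinely what makes the constant $C_0$ independent of $\eps$ — indeed a crude bound $\|c\|_\infty$ in place of $\|c\|_\osc$ would also work but give a worse (still $\eps$-independent) constant, so the oscillation bound is used purely for sharpness.
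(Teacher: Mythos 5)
Your proof is correct and follows essentially the same route as the paper: rearrange the concavity of $\IK_{\rho^V}^\eps$ via Proposition \ref{p:deriI}, establish $\osc(\psi_t^{c,\eps})\leq\|c\|_\osc$ to get the two-sided density bound $C_0^{-1}\leq\hat\rho_\eps[\psi_t]\leq C_0$, and trade one factor of $C_0$ on each side of the inequality. One small mislabeling: your parenthetical claim that ``only the lower bound is needed'' is backwards for the step it precedes --- replacing $\dd\rho^V_\eps[\psi_t]$ by $\dd\rho^V$ in the integral of the nonnegative variances uses the \emph{upper} bound $\hat\rho_\eps[\psi_t]\leq C_0$, while the lower bound is what you (correctly) invoke later for $\Var_{\rho^V_\eps[\psi_t]}(f)\geq C_0^{-1}\Var_{\rho^V}(f)$; since you derive both bounds, the argument stands as written.
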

\begin{proof}[Proof of Proposition \ref{p:D2K}]
Since $\mathcal{I}_{\rho^V}^\varepsilon$  is concave according to Proposition \ref{p:Iconcave}, we get from Proposition \ref{p:deriI} that
\begin{equation}\label{e:but}
\frac{1}{\varepsilon} \int_U \Var_{\mu_\eps^x[\psi]}(v) \dd\rho^V_\eps[\psi](x) \geq  \Var_{\rho_\eps^V[\psi]}(x\mapsto \sca{\mu_\eps^x[\psi]}{v}).
\end{equation}
We observe that for any $x$, $x'$
$$
\int_Y \exponentiate{-\frac{c(x,y)-\psi(y)}{\varepsilon}}d\sigma(y)\leq \exponentiate{\frac{\|c\|_\osc}{\varepsilon}} \int_Y \exponentiate{-\frac{c(x',y)-\psi(y)}{\varepsilon}}d\sigma(y)
$$
hence $\sup \psi^{c,\varepsilon} - \inf \psi^{c,\varepsilon}\leq \|c\|_\osc$.
Therefore, setting $C_0=\exp(\lVert c\rVert_{\osc})$, we have
$$
C_0^{-1}\leq \hat{\rho}^V_\varepsilon[\psi](x)\leq C_0.
$$
The expression on the left side of \eqref{e:but} is therefore bounded from above by the quantity $\frac{C_0}{\eps} \int_U\Var_{\mu^x_\eps[\psi]}(v) \dd\rho^V(x)=-C_0\langle D^2\mathcal{K}^\varepsilon_{\rho^V}[\psi]v,v\rangle$, where the last equality is due to Lemma \ref{l:hesskanto}. For the same reason, the right-hand side in \eqref{e:but} is bounded from below by $ C_0^{-1}\Var_{\rho^V}(x\mapsto \sca{\mu_\eps^x[\psi]}{v} )$. All in all, we get \eqref{e:Keps}.
\end{proof}

\section{Stability of Kantorovich potentials}\label{s:proofmain}
In this section, we leverage the strong concavity of Kantorovich's regularized functional to show \eqref{e:varinrho0} in Theorem \ref{t:stabpote2}, i.e.,  stability of the Kantorovich potentials.

\subsection{Existence and uniqueness of Kantorovich potentials and optimal transport maps}\label{s:existunique}

We start by recalling the following result, due to McCann, which shows the existence of Kantorovich potentials and optimal transport maps in Riemannian manifolds.
\begin{theorem}\cite[Theorem 10.41]{villani2}, \cite[Theorem 13]{mccann} \label{t:mccann}
Let $M$ be a Riemannian manifold endowed with the quadratic cost $c(x,y)=\frac12 d(x,y)^2$. Let $\rho$, $\mu$ be probability measures compactly supported in $M$, with $\rho$ absolutely continuous. Assume that the optimal transport cost from $\rho$ to $\mu$ is finite. Then there is a $\rho$-a.e. unique solution of the Monge problem \eqref{e:mincost} between $\rho$ and $\mu$, and it can be written as 
\begin{equation}\label{e:t}
T(x)=\exp_x\left(-\nabla\phi(x)\right)
\end{equation}
where $\phi=\psi^c$ defined in \eqref{e:ctransform}, for some $\psi:\Y\rightarrow \R\cup\{+\infty\}$.
\end{theorem}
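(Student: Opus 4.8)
This is McCann's theorem, and the plan is to follow his argument: obtain Kantorovich potentials from duality, and then read off the transport map from the first-order optimality condition.

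First I would invoke Kantorovich duality. Since the transport cost from $\rho$ to $\mu$ is finite and $c=\tfrac12\dist(\cdot,\cdot)^2$ is continuous and bounded below on the (compact) supports, there is no duality gap and the dual is attained by a pair $(\phi,\phi^{\bar c})$ with $\phi=\psi^c$ a $c$-concave function, where $\phi^{\bar c}(y)=\min_{x}(c(x,y)-\phi(x))$ and $\phi(x)=\min_{y}(c(x,y)-\phi^{\bar c}(y))$; moreover $\phi(x)+\phi^{\bar c}(y)\le c(x,y)$ everywhere with equality $\gamma$-a.e. for any optimal plan $\gamma$. The crucial regularity input is that $x\mapsto c(x,y)$ is locally semiconcave, locally uniformly in $y$ (a standard property of the squared Riemannian distance). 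Hence $\phi$, being a pointwise infimum of a family of uniformly locally semiconcave functions, is itself locally semiconcave, in particular locally Lipschitz, and therefore differentiable outside a set of zero Riemannian volume; since $\rho$ is absolutely continuous, $\phi$ is differentiable $\rho$-a.e.

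Next I would extract the map. Fix an optimal plan $\gamma$ (it exists by tightness and lower semicontinuity). For $\gamma$-a.e.\ $(x,y)$ one has $\phi(x)+\phi^{\bar c}(y)=c(x,y)$, so $x$ minimizes $w\mapsto c(w,y)-\phi(w)$ over $M$. At an $x$ where $\phi$ is differentiable and where $w\mapsto c(w,y)$ is differentiable, this interior minimality forces $\nabla_x c(x,y)=\nabla\phi(x)$. For the quadratic cost, $\nabla_x c(x,y)=-\exp_x^{-1}(y)$ whenever $y\notin\mathrm{cut}(x)$, which yields $y=\exp_x(-\nabla\phi(x))=:T(x)$, i.e.\ \eqref{e:t}. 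Consequently every optimal plan is concentrated on the graph of the single $\rho$-a.e.-defined map $T$, so $\gamma=(\mathrm{id},T)_{\#}\rho$, whence $T_{\#}\rho=\mu$, $T$ solves \eqref{e:mincost} (the Monge cost dominates the Kantorovich cost and $T$ realizes the latter), and uniqueness $\rho$-a.e.\ follows because any two optimal plans coincide with $(\mathrm{id},T)_{\#}\rho$.

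I expect the main obstacle to be the cut-locus analysis needed in the previous step: one must show that the set of $x$ for which the $\gamma$-paired point $y$ lies in $\mathrm{cut}(x)$ is $\rho$-negligible. The argument is that at such an $x$ the function $w\mapsto c(w,y)$ has an upward corner (either $x$ and $y$ are conjugate, or joined by two distinct minimizing geodesics), which is incompatible with $x$ being an interior minimum of $w\mapsto c(w,y)-\phi(w)$ at a point of differentiability of $\phi$; combined with the semiconcavity this restricts the bad $x$ to the non-differentiability set of a locally semiconcave function, hence to a volume-null, therefore $\rho$-null, set. One may alternatively quote directly that the cut locus of each point has measure zero. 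The remaining steps are routine bookkeeping with Kantorovich duality and Rademacher's theorem on manifolds; I would simply cite \cite{mccann} and \cite[Theorem 10.41]{villani2} for the full details.
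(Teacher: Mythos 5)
The paper offers no proof of this theorem---it is quoted verbatim from McCann and Villani---and your proposal reproduces the standard argument from those references (Kantorovich duality, local semiconcavity of the squared distance, $\rho$-a.e.\ differentiability of the $c$-concave potential, first-order condition plus cut-locus analysis), so it is correct and takes essentially the same route. One minor imprecision: conjugacy of $x$ and $y$ along a unique minimizing geodesic does \emph{not} produce a corner in $w\mapsto c(w,y)$ (only multiple minimizing geodesics do), but this is harmless since $\nabla_x c(x,y)=-\exp_x^{-1}(y)$ and hence $y=\exp_x(-\nabla\phi(x))$ still hold at such points.
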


Uniqueness of $\phi$, up to a constant in each connected component of the support of $\rho$, follows from \cite[Remark 10.30]{villani2}. It is also a byproduct of our stability inequality for Kantorovich potentials \eqref{e:varinrho0}, see footnote on page~\pageref{foot}.

\begin{definition}
Given $\rho$ and $\mu$ two Borel probability measures on $M$, with $\rho$ absolutely continuous, we call the $\rho$-a.e. unique map $T\in \mathcal{S}(\rho,\mu)$ minimizing $\mathscr{C}$ (defined in~\eqref{e:transportcost}) the \emph{optimal transport map}. Also, if the support of $\rho$ is connected, the \emph{Kantorovich potential} is the unique Lipschitz function $\phi: M\rightarrow \R$ such that $T(x)=\exp_x(-\nabla\phi(x))$ for $\rho$-a.e. $x$ and $\int_M \phi(x)d\rho(x)=0$.
\end{definition}

\subsection{Proof of \eqref{e:varinrho0}}\label{s:stabbrpot}
Recall that the $c$-transform
\begin{equation}\label{e:ctransform}
\psi^c:x\mapsto \inf_{y\in\mathcal{Y}} (c(x,y)-\psi(y))
\end{equation}
on a metric space of finite diameter is either identically infinite ($\psi=\pm\infty$), or Lipschitz continuous in $M$ (see \cite[Lemma 2]{mccann}).

The first part of Theorem \ref{t:stabpote2} is an almost direct consequence of the following proposition, whose proof is given in Section \ref{s:proofprop}.
\begin{proposition}\label{p:stabpote}
Let $\mathcal{X}$ be a John domain in a smooth and complete Riemannian manifold $(M, g)$. Let $\rho=\hat\rho{\rm dvol}$ be a probability measure on $\mathcal{X}$ satisying $0<m_\rho\leq \hat\rho\leq M_\rho<+\infty$, let $c(x,y)=\frac12 \dist(x,y)^2$, and let $\Y\subset M$ be a compact set. Then there exists $C>0$ such that for any $\psi_0$, $\psi_1\in C^0(\Y)$,
\begin{equation}\label{e:varinrho01}
\Var_\rho(\psi_1^{c}-\psi_0^{c})\leq C  \sca{(y_{\psi_1})_{\#}\rho-(y_{\psi_0})_{\#}\rho}{\psi_1-\psi_0}
\end{equation}
where $y_\psi$ is defined $\rho$-a.e. by
\begin{equation}\label{e:ypsi}
y_\psi:x\mapsto \exp_x(-\nabla \psi^c(x)).
\end{equation}
\end{proposition}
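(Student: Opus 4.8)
The plan is to reduce the global variance estimate on the John domain $\X$ to a local estimate on small balls, where the strong concavity of the regularized Kantorovich functional $\Kant_{\rho^V}^\eps$ (Proposition \ref{p:D2K}) applies because a strongly convex function $V$ exists locally. First I would fix $\psi_0,\psi_1\in C^0(\Y)$ and, for a small ball $B=B(x_*,r)\subset M$ on which Assumption \ref{a:assumptions} holds (in particular \eqref{eqn: Bakry-Emery} with some $\lambda$ depending on the geometry of $M$ near $x_*$ and on $r$), consider the normalized restriction $\rho_B$ of $\rho$ to $B$. Applying Proposition \ref{p:D2K} along the segment $t\mapsto \psi_t=(1-t)\psi_0+t\psi_1$ and integrating twice in $t$, together with the fundamental theorem of calculus relating $\langle\nabla\Kant^\eps_{\rho_B^V}(\psi_1)-\nabla\Kant^\eps_{\rho_B^V}(\psi_0),\psi_1-\psi_0\rangle$ to the second derivative (here using Lemma \ref{l:gradient} so that the gradient is $-\mu_\eps[\psi]$), one gets
\begin{equation*}
\int_0^1\!\!\int_0^1 \Var_{\rho_B^V}\bigl(x\mapsto\sca{\mu_\eps^x[\psi_s]}{\psi_1-\psi_0}\bigr)\,\dd s\,\dd t \;\lesssim\; \sca{\mu_\eps[\psi_0]-\mu_\eps[\psi_1]}{\psi_1-\psi_0},
\end{equation*}
with a constant depending only on $\|c\|_\osc$ on $B$. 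Letting $\eps\to0$ and using Remark \ref{r:convergence} (so that $\mu_\eps^x[\psi]\to\delta_{y_\psi(x)}$ for a.e.\ $x$ and $\sca{\mu_\eps^x[\psi]}{v}\to v(y_\psi(x))$), this passes to the limit to give a local estimate of the form $\Var_{\rho_B}\bigl((\psi_1^c-\psi_0^c)\text{-type quantity}\bigr)\lesssim \sca{(y_{\psi_1})_\#\rho_B-(y_{\psi_0})_\#\rho_B}{\psi_1-\psi_0}$; one must be careful that the quantity whose variance is controlled is $x\mapsto \psi_1^c(x)-\psi_0^c(x)$, which follows since $y_\psi(x)$ is the $c$-argmin and a first-variation/envelope computation identifies $\frac{d}{dt}\psi_t^c(x)=-( \psi_1-\psi_0)(y_{\psi_t}(x))$ for a.e.\ $x$.

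Next, I would globalize. Since $\X$ is a John domain, it admits a Boman chain (Whitney-type) decomposition into countably many balls $\{Q_j\}$ with bounded overlap, a "central" ball, and a chain structure connecting each $Q_j$ to the center through a controlled sequence of balls whose consecutive members have comparable radii and substantially overlapping dilates; this is exactly the structure exploited in \cite{letrouitmerigot} in the Euclidean case, and Section \ref{sec: Boman proof} establishes the Riemannian analogue. On each $Q_j$ (taken small enough that Assumption \ref{a:assumptions} holds, with $\lambda$ and $\|c\|_\osc$ bounds uniform over the relevant scales because $\Y$ is compact and $M$ is smooth) we have the local estimate above. The classical mechanism for upgrading local Poincaré-type inequalities along a Boman chain to a global one — summing the local variances, using the chain to compare the local averages of $\psi_1^c-\psi_0^c$ across overlapping balls, and using the bounded-overlap property together with the lower bound $m_\rho$ on the density to control the telescoping error — then yields $\Var_\rho(\psi_1^c-\psi_0^c)\lesssim \sum_j \sca{(y_{\psi_1})_\#\rho_{Q_j}-(y_{\psi_0})_\#\rho_{Q_j}}{\psi_1-\psi_0}$. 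Finally the bounded overlap of $\{Q_j\}$ and the upper bound $M_\rho$ let me reassemble the right-hand side into a single term $\lesssim \sca{(y_{\psi_1})_\#\rho-(y_{\psi_0})_\#\rho}{\psi_1-\psi_0}$, which is \eqref{e:varinrho01}.

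The main obstacle, and the part requiring the most care, is the globalization via the Boman chain: in a Riemannian manifold one must construct the Whitney/Boman decomposition so that the geometric comparability constants (radii ratios, overlap bounds, the constant in \eqref{e:johncurve}) are genuinely uniform, and one must ensure the balls $Q_j$ are small enough that the local Bakry--Émery assumption \eqref{eqn: Bakry-Emery} holds with a $\lambda$ that does not blow up — this is where the non-compactness of $M$ and the need to only assume $\Y$ (not $M$) compact make the argument more delicate than in $\R^d$. A secondary technical point is justifying the $\eps\to0$ passage in the local estimate: one needs enough uniform integrability (which follows from $\|c\|_\osc<\infty$ on each ball and boundedness of $\psi_0,\psi_1$, giving uniform $L^\infty$ bounds on $\sca{\mu^x_\eps[\psi_t]}{\psi_1-\psi_0}$) together with a.e.\ convergence of the $c$-argmin, which holds by McCann's theorem since $\rho$ is absolutely continuous. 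Everything else is routine bookkeeping with the formulas from Section \ref{s:convkantoreg}.
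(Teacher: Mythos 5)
Your overall route is the same as the paper's: strong concavity of $\Kant_\rho^\eps$ on small balls where a strongly convex $V$ exists (Propositions \ref{p:strongconvfunc} and \ref{p:D2K}), followed by gluing over a Boman chain decomposition of the John domain (Proposition \ref{prop: boman chain construction} and Lemma \ref{l:gluevarjohnmanifolds}), and a final passage $\eps\to0$ via the Laplace principle. The uniformity issues you flag (the Bakry--\'Emery constant and the strong-convexity radius) are resolved exactly as you anticipate, by compactness of $\overline{\X}$ and a uniform lower bound on ${\rm Ric}$ near it.

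There is, however, one genuine gap in your reassembly of the right-hand side. After summing the local estimates you are left with $\sum_j \rho(Q_j)\,\sca{\mu^V_{\eps,j}[\psi_1]-\mu^V_{\eps,j}[\psi_0]}{\psi_1-\psi_0}$ (up to constants), i.e.\ a sum over \emph{overlapping} balls of integrals of $h_\eps(x):=\sca{\mu^x_\eps[\psi_1]-\mu^x_\eps[\psi_0]}{\psi_1-\psi_0}$. Bounded overlap alone only yields $\sum_j\int_{Q_j}h_\eps\,\dd\rho\leq \A\int_\X |h_\eps|\,\dd\rho$; to conclude with $\A\int_\X h_\eps\,\dd\rho = \A\,\sca{\mu_\eps[\psi_1]-\mu_\eps[\psi_0]}{\psi_1-\psi_0}$ you need the pointwise sign $h_\eps(x)\geq 0$ for every $x$. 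This is true --- it follows from the concavity of $\psi\mapsto\Kant^\eps_{\delta_x}(\psi)$ (Lemma \ref{l:hesskanto} with $\rho=\delta_x$) by the same fundamental-theorem-of-calculus computation you use locally --- but it is an essential ingredient, not bookkeeping, and your proposal never states it. Relatedly, the right-hand side of your displayed local inequality has the wrong sign: by Lemma \ref{l:gradient} and concavity the correct nonnegative quantity is $\sca{\mu_\eps[\psi_1]-\mu_\eps[\psi_0]}{\psi_1-\psi_0}$, consistent with the orientation of \eqref{e:varinrho01}. A more minor remark: the paper glues at fixed $\eps>0$ and passes to the limit only once at the very end (uniform Lipschitz bounds on $\psi_i^{c,\eps}$ give uniform convergence of the left-hand side, and narrow convergence of $\mu_\eps[\psi_i]$ to $(y_{\psi_i})_\#\rho$ handles the right-hand side); taking $\eps\to0$ ball-by-ball as you propose also works, but forces you to repeat the Laplace-principle argument locally.
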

Let us explain how to conclude the proof of the first part of Theorem \ref{t:stabpote2}.
\begin{proof}[Proof of \eqref{e:varinrho0}]  
We specialize Proposition \ref{p:stabpote} to the case where $\psi_0$ (resp. $\psi_1$) is \label{foot}a\footnote{This is actually unique up to constants, but we do not need to know this to prove \eqref{e:varinrho0}; in fact uniqueness may be obtained as a consequence of \eqref{e:varinrho0}, taking $\mu=\nu$.} dual Kantorovich potential associated to the optimal transport problem from $\rho$ to $\mu$ (resp. from $\rho$ to $\nu$). In particular, $(y_{\psi_0})_{\#}\rho=\mu$ and $(y_{\psi_1})_{\#}\rho=\nu$. Applying the Kantorovich-Rubinstein duality formula (see \cite[Theorem 1.14]{villani}) to the right-hand side, since $\psi_0$ and $\psi_1$ are ${\rm diam}(\mathcal{X})$-Lipschitz, we deduce 
$$
\Var_\rho(\phi_\mu-\phi_\nu)\leq C{\rm diam}(\X)W_1(\mu,\nu).
$$
This concludes the proof.
\end{proof}

\begin{remark}\label{r:sharp}
Let us show that the exponent $1/2$ in \eqref{e:varinrho0} becomes asymptotically sharp as $d\rightarrow +\infty$.
Denote by $\omega_d$ the Euclidean volume of the unit ball $B_d(0,1)$ of $\R^d$ and by $\sigma_{d-1}$ the Euclidean area of the unit sphere $\mathbb{S}^{d-1}\subset\R^d$. Let
$$
\rho_d(x)=\frac{1}{\omega_d}\chi_{B_d(0,1)}
$$
be the uniform probability density on the unit ball of $\R^d$. Consider for any $\varepsilon\in(0,1)$ the radial and convex functions
$$
\phi_\varepsilon^{(1)}(x)=|x|, \qquad \phi_\varepsilon^{(2)}(x)=\max(|x|,\varepsilon).
$$
Then
$$
\int_{B_d(0,1)} \left(\phi_\varepsilon^{(2)}-\phi_\varepsilon^{(1)}\right)\dd\rho=\frac{\sigma_{d-1}}{\omega_d}\int_0^\varepsilon r^{d-1}(\varepsilon-r)\dd r= \frac{\varepsilon^{d+1}}{d+1}
$$
and 
$$
\int_{B_d(0,1)} \left(\phi_\varepsilon^{(2)}-\phi_\varepsilon^{(1)}\right)^2\dd\rho=\frac{\sigma_{d-1}}{\omega_d}\int_0^\varepsilon r^{d-1}(\varepsilon-r)^2\dd r=\frac{2\varepsilon^{d+2}}{(d+1)(d+2)}.
$$
Hence, 
\begin{equation}\label{e:varphi1phi2eps}
\Var(\phi_\varepsilon^{(2)}-\phi_\varepsilon^{(1)})^{1/2}\sim c_d \varepsilon^{(d+2)/2}
\end{equation}
as $\varepsilon\rightarrow 0$, with $c_d=(2/(d+1)(d+2))^{1/2}$.
Finally, denoting by $\delta_{\mathbb{S}^{d-1}}$ the uniform probability measure on $\mathbb{S}^{d-1}$,
$$
(\nabla\phi_\varepsilon^{(1)})_\#\rho_d=\delta_{\mathbb{S}^{d-1}}, \qquad (\nabla\phi_\varepsilon^{(2)})_\#\rho_d= (1-\varepsilon^{d})\delta_{\mathbb{S}^{d-1}}+\varepsilon^d\delta_0
$$
hence 
$$
W_1((\nabla\phi_\varepsilon^{(1)})_\#\rho_d,(\nabla\phi_\varepsilon^{(2)})_\#\rho_d)=\varepsilon^d.
$$
Comparing with \eqref{e:varphi1phi2eps} we obtain a stability exponent $\alpha_d=(d+2)/2d$, which tends to $1/2$ as $d\rightarrow +\infty$.
\end{remark}

\subsection{Proof of Proposition \ref{p:stabpote}} \label{s:proofprop}
We now conclude Section \ref{s:proofmain} by completing the missing step in the proof of \eqref{e:varinrho0}, namely we prove Proposition \ref{p:stabpote}.

\subsubsection{Boman chains in metric spaces with doubling measures.} \label{sec: Boman proof}

In this section, we prove that any John domain $\mathcal{X}$ of a metric space endowed with a doubling measure admits a decomposition into balls with some specific properties listed below. In the Euclidean case, a simple version of this decomposition, deduced from the Whitney decomposition, is used in \cite[Section 3.1]{letrouitmerigot} (with cubes instead of balls, but this plays no role here). This decomposition is used to prove that the Boman chain condition, first introduced in \cite{boman} in a Euclidean framework, holds. Here, we show how to generalize these arguments to any John domain of a metric space endowed with a doubling measure $\rho$. 

In the sequel, given a ball $Q$ and $r>0$, the ball with same center as $Q$ and radius multiplied by $r$ is denoted by $rQ$. Recall that a measure $\rho$ on a metric space $S$ is called \emph{doubling} if there exists $C>0$ such that for any ball $Q\subset S$, $\rho(2Q)\leq C\rho(Q)$. 

\begin{definition}[Boman chain condition]\label{d:bomanmanifolds}
Let $\A$, $\B$, $\C>1$. A probability measure $\rho$ on an open set $\X$ of a metric space satisfies the Boman chain condition with parameters $\A,\B,\C\in\R$ if there exists a covering $\mathcal{F}$ of $\X$ by open balls $Q\in\mathcal{F}$ such that:
\begin{itemize}
\item For any $x\in\R^d$,
\begin{equation}\label{e:A}
\sum_{Q\in\mathcal{F}}\chi_{2 Q}(x)\leq \A\chi_\X(x).
\end{equation}
\item For some fixed ball $Q_0$ in $\mathcal{F}$, called the central ball, and for every $Q\in\mathcal{F}$, there exists a chain $Q_0,Q_1,\ldots,Q_N=Q$ of distinct balls from $\mathcal{F}$ such that for any $j\in \{0,\ldots,N-1\}$,
\begin{equation}\label{e:BJohn}
Q\subset \B Q_j.
\end{equation}
\item Consecutive balls of the above chain overlap quantitatively: for any $j\in \{0,\ldots,N-1\}$,
\begin{equation}\label{e:conseccubeoverlap}
\rho(Q_j\cap Q_{j+1})\geq \C^{-1}\max(\rho(Q_j),\rho(Q_{j+1})).
\end{equation}
\end{itemize}
\end{definition}
\begin{remark}
    Note that in Boman's original paper, he includes the following condition along with \eqref{e:conseccubeoverlap} (see \cite[Lemma 2.1 (2.3) (a)]{boman}):

    Consecutive balls of the chain have comparable size: for any $j\in \{0,\ldots,N-1\}$,
\begin{equation}\label{e:conseccubessize}
\C^{-1}\leq \frac{\rho(Q_j)}{\rho(Q_{j+1})}\leq \C.
\end{equation}
We observe here that \eqref{e:conseccubessize} is a direct consequence of \eqref{e:conseccubeoverlap}: indeed, assuming that \eqref{e:conseccubeoverlap} holds, we have
$$
\frac{\rho(Q_j)}{\rho(Q_{j+1})}\geq \frac{\rho(Q_j\cap Q_{j+1})}{\rho(Q_{j+1})}\geq C^{-1}\frac{\max(\rho(Q_j),\rho(Q_{j+1}))}{\rho(Q_{j+1})}\geq C^{-1}
$$
and similarly, $\frac{\rho(Q_{j+1})}{\rho(Q_j)}\geq C^{-1}$. 
\end{remark}
\begin{proposition}[Boman chain condition in John domains of metric spaces]\label{prop: boman chain construction}
Let $\rho$ be a probability measure which is the restriction to a John domain $\mathcal{X}$ of a doubling measure on a metric space. Then $\rho$ satisfies the Boman chain condition for some $\A$, $\B$, $\C>1$. Moreover, for any $R>0$, the covering $\mathcal{F}$ may be chosen in a way such that all its elements $Q\in\mathcal{F}$ have radius at most $R$.
\end{proposition}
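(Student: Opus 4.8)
The plan is to construct the covering $\mathcal{F}$ explicitly by adapting the Whitney-type decomposition used in the Euclidean setting (as in \cite[Section 3.1]{letrouitmerigot}) to the metric space $M$, using the John curves provided by Definition~\ref{d:johnmetric}. Since $\mathcal{X}$ is a John domain, fix the distinguished point $x_0$ and the constant $\eta>0$. For each $x\in\mathcal{X}$ set $d(x):=\dist(x,\mathcal{X}^c)$, and cover $\mathcal{X}$ by balls of the form $B(x, \kappa\, d(x))$ for a small dimensional/doubling constant $\kappa>0$ to be fixed; by a standard Vitali-type (or $5r$-covering) argument, extract a countable subfamily $\mathcal{F}=\{Q_i=B(x_i,\kappa\, d(x_i))\}$ whose dilates $\tfrac15 Q_i$ are disjoint but which still cover $\mathcal{X}$. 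Two elementary facts follow from the definition of $d(\cdot)$: (i) $2Q_i\subset\mathcal{X}$ (choosing $\kappa<\tfrac14$), so $\chi_{2Q_i}\le\chi_\mathcal{X}$; and (ii) within each $Q_i$ the function $d(\cdot)$ is comparable to $d(x_i)$ (within a factor $1\pm\kappa$). The bounded-overlap property \eqref{e:A} then follows from the disjointness of the $\tfrac15 Q_i$ together with the doubling property of $\rho$: any point $x$ lying in $N$ of the balls $2Q_i$ forces $N$ disjoint balls $\tfrac15 Q_i$ of comparable $\rho$-measure inside a fixed dilate of one of them, and doubling caps $N$ by a constant $\A$.

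Next I would build the chains. Given $Q=B(x,\kappa\,d(x))\in\mathcal{F}$, run the John curve $\gamma$ from $x$ to $x_0$ and select a finite ordered sequence of points $z_0=x, z_1,\dots, z_{N'}$ on $\gamma$ at which $d(\gamma(t))$ roughly doubles, i.e. $d(z_{k+1})\approx 2 d(z_k)$, stopping when $d$ reaches order $\dist(x_0,\mathcal{X}^c)$; then pick for each $z_k$ an element of $\mathcal{F}$ containing it, and also interpolate finitely many balls between consecutive $z_k$'s to ensure consecutive balls in the list actually overlap in the quantitative sense \eqref{e:conseccubeoverlap}. The John inequality \eqref{e:johncurve} is exactly what is needed here: it guarantees $d(\gamma(t))\ge\eta t$, so moving a controlled distance along $\gamma$ increases $d$ at a definite rate, which (a) makes the doubling-of-$d$ selection terminate after $N$ steps, and (b) gives the containment $Q\subset \B Q_k$ for all $k$ — since $Q$ has radius $\sim d(x)\le d(z_k)/\eta^{O(1)}$ and its center is within distance $\lesssim \ell(\gamma)\lesssim d(z_k)/\eta$ of the center of $Q_k$, both controlled by a multiple of the radius of $Q_k$. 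The overlap estimate \eqref{e:conseccubeoverlap} reduces to showing $\rho(Q_k\cap Q_{k+1})\gtrsim \rho(Q_k)$ when the two balls have comparable radii and centers at distance comparable to (a fixed fraction of) that radius; this is where doubling of $\rho$ is used again, via the fact that a doubling measure gives every ball comparable mass to any comparable ball it substantially overlaps, and one arranges the interpolating balls so that consecutive ones are nested up to a fixed dilation.

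For the central ball one takes $Q_0\in\mathcal{F}$ to be a fixed ball containing $x_0$ (of radius $\sim\kappa\,\dist(x_0,\mathcal{X}^c)$); every chain terminates at $Q_0$ after reversing, and after relabeling and discarding repetitions we obtain a chain of \emph{distinct} balls $Q_0,\dots,Q_N=Q$ with the three required properties. The final sentence of the proposition — that the radii can be taken $\le R$ — is handled by simply intersecting $\mathcal{F}$ with the additional constraint $\kappa\,d(x_i)\le R$, i.e. replacing each $Q_i$ of radius exceeding $R$ by a finite doubling-controlled subcover by balls of radius $\le R$ (using doubling to keep bounded overlap), and noting this only affects the balls far from $\mathcal{X}^c$, where $d$ is large but bounded since $\mathcal{X}$ is bounded; the chains are adjusted by inserting finitely many such small balls with the quantitative overlap maintained.

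The main obstacle I expect is the quantitative overlap condition \eqref{e:conseccubeoverlap} along the chain, rather than the covering or the containment \eqref{e:BJohn}. In the Euclidean Whitney decomposition adjacent cubes share a full face, so overlap is automatic; here one only has a curve and a $5r$-covering, so consecutive selected balls need not overlap at all, let alone in a fixed proportion of their mass. The fix is the interpolation step — inserting a chain of balls of slowly varying radius tracking $d(\gamma(t))$ along the curve, each contained in a fixed dilate of the next — but making this precise while simultaneously keeping all radius comparabilities, the count $N$ finite, the containment $Q\subset\B Q_j$, and distinctness of the balls, is the delicate bookkeeping that replaces the one-line Euclidean argument. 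A secondary subtlety is that $M$ (or a general metric space) need not be doubling as a space; one only assumes $\rho$ is a doubling \emph{measure}, so all geometric counting must be routed through $\rho$-measures of balls rather than through a metric doubling constant, which is why \eqref{e:A} and \eqref{e:conseccubeoverlap} are phrased measure-theoretically.
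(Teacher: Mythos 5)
Your proposal is correct in outline and follows essentially the same route as the paper: a Whitney-type cover by balls of radius proportional to the distance to $\mathcal{X}^c$, a Vitali $5r$-selection plus doubling of $\rho$ for the bounded overlap \eqref{e:A}, and chains obtained by walking along the John curve and covering sample points by members of $\mathcal{F}$. The ``delicate bookkeeping'' you flag for \eqref{e:conseccubeoverlap} is resolved in the paper exactly by the interpolation you describe, implemented directly: the sample points are taken at arclength increments $\tfrac{1}{L}\delta(y_{i-1})$ with $L$ large, so that consecutive balls both contain $B(y_j,\delta(y_j)/200)$ and are both contained in $B(y_j,\delta(y_j)/2)$, and doubling does the rest; there is no separate ``$d$ doubles'' selection step. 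The one place where your route genuinely diverges, and is weaker, is the radius bound $R$: subdividing oversized balls a posteriori and ``adjusting the chains'' would require re-verifying \eqref{e:A}, \eqref{e:BJohn} and \eqref{e:conseccubeoverlap} for the new balls, which you leave unaddressed; the paper avoids this entirely by building the cover from $\delta(x):=\min(\dist(x,\mathcal{X}^c),R)$ rather than $\dist(x,\mathcal{X}^c)$, so every ball has radius at most $R/100$ (before dilation) from the start.
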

\begin{proof}
We slightly modify the construction of \cite[Proposition 4.1.15]{heinonen}. Let $R>0$ and assume without loss of generality that $R\leq {\rm diam}(M)$. For $x\in\mathcal{X}$, let 
$$
\delta(x)=\min(\dist(x, \mathcal{X}^c),R),
$$
and for $k\in\Z$, let
$$
\mathcal{F}_k:=\left\{B(x,\delta(x)/100) \mid x\in \mathcal{X}\ {\rm with }\ 2^{k-1}<\delta(x)\leq 2^k\right\}.
$$
By the (infinite) Vitali covering lemma, for any $k\in\Z$, we can pick a countable pairwise disjoint subfamily $\mathcal{G}_k\subset\mathcal{F}_k$ such that
\begin{equation}\label{e:FkGk}
\text{for all }  Q\in \mathcal{F}_k, \text{ there exists }  Q'\in\mathcal{G}_k \text{  such that  } Q\subset 5Q',
\end{equation}
and as a consequence
$$
\bigcup_{Q\in\mathcal{F}_k}Q\subset \bigcup_{Q\in \mathcal{G}_k} 5Q.
$$
We consider the (countable) collection 
\begin{align*}
    \mathcal{F}:=\bigcup_{k\in \Z}\left\{5Q\mid Q\in\mathcal{G}_k\right\}.
\end{align*} 

Let us check that $\mathcal{F}$ verifies \eqref{e:A}. For some $p\in \N$, assume there is a point $x\in \bigcap_{i=1}^p B(x_i,\frac{1}{20} \delta(x_i))$ where $B(x_i,\frac{1}{20} \delta(x_i))\in\mathcal{F}$ for $1\leq i\leq p$. Without loss of generality, assume $\delta(x_1)= 
\max_{1\leq i\leq p}\delta(x_i)$, then we can show that
\begin{equation}\label{e:deltaxideltax1}
\delta(x_i)\geq \frac{9}{11} \delta(x_1).
\end{equation}
Indeed, since $B_1\cap B_i\neq \emptyset$, 
$$
\dist(x_i,\X^c)\geq \dist(x_1,\X^c)-\frac{1}{10}\delta(x_i)-\frac{1}{10}\delta(x_1)
$$
and due to the definition of $\delta(x)$ as a minimum, we may replace $\dist(x_1,\X^c)$ and $\dist(x_i,\X^c)$ respectively by $\delta(x_1)$ and $\delta(x_i)$, which yields \eqref{e:deltaxideltax1}. Let $k_1\in\Z$ be such that $B(x_1,\frac{1}{100}\delta(x_1))\in \mathcal{F}_{k_1}$, it then follows from \eqref{e:deltaxideltax1} and since $\delta(x_1)\geq \delta(x_i)$ that 
$$
\forall i=1,\ldots,p, \qquad B\Bigl(x_i,\frac{1}{100}\delta(x_i)\Bigr)\in\mathcal{F}_{k_1-1}\cup \mathcal{F}_{k_1}.
$$
We deduce that either $\mathcal{F}_{k_1}$ or $\mathcal{F}_{k_1-1}$ contains at least $\ell\geq p/2$ balls $B(x_i,\frac{1}{100}\delta(x_i))$, whose centers we denote by $x_{i_1},\ldots,x_{i_\ell}$. Then for $1\leq j_1, j_2\leq \ell$, we see
$$
\dist(x_{j_1},x_{j_2})\geq \frac{1}{50}\min\{\delta(x_{j_1}),\delta(x_{j_2})\}\geq \frac{1}{70}\delta(x_1)
$$
where the first inequality comes from the fact that each $\mathcal{G}_k$ is a family of \emph{disjoint} balls, and the second one from \eqref{e:deltaxideltax1}. 

 Combining the above, we find that the ball $B(x_1,\frac{3}{10} \delta(x_1))$ contains the $\frac{1}{70}\delta(x_1)$-separated set $\{x_{i_1},\ldots,x_{i_\ell}\}$ and 
$$
B(x_{i_j},\frac{1}{10}\delta(x_{i_j}))\subset B(x_1,\frac{3}{10}\delta(x_1)),\qquad \forall 1\leq j\leq \ell,
$$
due to the fact that $\delta(x_1)\geq \delta(x_i)$. Then using \eqref{e:deltaxideltax1} together with the fact that $\dist(x_1,x_i)\leq \frac{1}{10} (\delta(x_1)+\delta(x_i))$, we get that $B(x_1,\frac{3}{10}\delta(x_1))\subset B(x_i,\frac35\delta(x_i))$ for any $i=1,\ldots,p$. Therefore using that $\rho$ is a doubling measure,
$$
\rho(B(x_1,\frac{3}{10}\delta(x_1)))\geq \sum_{j=1}^\ell \rho(B(x_{i_j},\frac{1}{140}\delta(x_{i_j}))) \geq C\sum_{j=1}^\ell \rho(B(x_{i_j},\frac35\delta(x_{i_j})))\geq C\ell \rho(B(x_1,\frac{3}{10}\delta(x_1))).
$$
As a result $\ell$, and thus $p$, are bounded above by a constant $\A$ which only depends on $\rho$.

Now let $x_0$ be from Definition \ref{d:johnmetric}, and  choose $Q_0\in\mathcal{F}$ satisfying $B(x_0,\delta(x_0)/100)\subset Q_0$, which exists according to \eqref{e:FkGk}. For an arbitrary $Q\in\mathcal{F}$, let us construct the chain from $Q_0$ to $Q$ satisfying the properties of Definition \ref{d:bomanmanifolds}. For this we adapt the proof of \cite[Lemma 2.1]{boman} which handles the Euclidean case. Let $x$ be the center of $Q$, and take according to Definition \ref{d:johnmetric} a curve $\gamma$ from $x$ to $x_0$ satisfying \eqref{e:johncurve}. Without loss of generality we also assume that $\eta\leq 1$. We claim that for all $z\in\gamma$
\begin{equation}\label{e:ratiodisteta}
\frac{\dist(z,\mathcal{X}^c)}{\dist(x,\mathcal{X}^c)}\geq \frac{\eta}{2}.
\end{equation}
Assume \eqref{e:ratiodisteta} does not hold, hence $\dist(z,\X^c)< \frac{\eta}{2} \dist(x,\X^c)$ for some $z=\gamma(t)$. Then combining with the triangle inequality,
\begin{align*}
   \dist(z,\X^c)< \frac{\eta}{2}\dist(x,\X^c)\leq \frac{\eta}{2}(\dist(x,z)+\dist(z,\X^c))\leq \frac{\eta}{2}(t+\dist(z,\X^c)),
\end{align*}
which after rearranging and using \eqref{e:johncurve} yields $\eta> 1$, a contradiction. Thus we must have~\eqref{e:ratiodisteta}.

For purely notational purposes, we will now consider the reverse path $\tilde{\gamma}$ defined by $\tilde{\gamma}(t)=\gamma(\ell(\gamma)-t)$. Letting $L=1000$, we set $t_0=0$, $y_0=\tilde{\gamma}(t_0)=x_0$, and recursively define 
$$
t_i:=t_{i-1}+\frac{1}{L} \delta(y_{i-1}), \qquad y_i:=\tilde{\gamma}(t_i)
$$
for $i\in\N$. The inequality \eqref{e:ratiodisteta} implies that $\delta(\gamma(t))$ is bounded below over $t\in[0,\ell(\gamma)]$, hence there exists a smallest $N\in\N\cup\{0\}$ such that $t_{N+1}>\ell(\gamma)$. For each $i\in\{0,\ldots,N\}$ we consider $Q_i\in\mathcal{F}$ such that
\begin{equation}\label{e:martinet}
    B(y_i,\delta(y_i)/100)\subset Q_i,
\end{equation}
which exists according to \eqref{e:FkGk}. We also claim it is possible to take $Q_N=Q$. To see this, we observe that $\dist(x,y_N)\leq \frac{1}{L}\delta(y_N)$ which implies $\delta(y_N)\leq \delta(x)+\dist(x,y_N)\leq \delta(x)+\frac{1}{L}\delta(y_N)$, hence $\delta(y_N)\leq \frac{L}{L-1}\delta(x)$, thus $B(y_N,\delta(y_N)/100)\subset Q$. Note that $Q_0,\ldots,Q_N$ are not necessarily distinct, and that $y_i$ is not necessarily the center of $Q_i$.

Let us prove \eqref{e:BJohn}. For this we first prove that if $j\in\{0,\ldots,N\}$, then 
\begin{equation}\label{e:distballbord}
\dist(Q_j,\mathcal{X}^c)\leq c_Mr_j
\end{equation}
where $c_M:=20\frac{{\rm diam}(M)}{R}-1$, $x_j$ is the center of $Q_j$ (in particular, $x_N=x$ is the center of $Q$), and $r_j=\delta(x_j)/20$ denotes the radius of $Q_j$. 
Indeed, $\dist(Q_j,\X^c)=\dist(x_j,\X^c)-r_j$, hence \eqref{e:distballbord} follows since
$$
\frac{\dist(Q_j,\X^c)}{r_j}=\frac{\dist(x_j,\X^c)}{r_j}-1=20\frac{\dist(x_j,\X^c)}{\min(\dist(x_j,\X^c),R)}-1\leq c_M.
$$
We also deduce from \eqref{e:distballbord}
\begin{equation}\label{e:distyjbord}
\dist(x_j,\X^c)\leq (c_M+1)r_j.
\end{equation}
Now fix $j\in\{0,\ldots,N\}$, then the distance from $x_j$ to an arbitrary point of $Q=Q_N$ is at most $\dist(x_j,x_N)+r_N$. Therefore to prove \eqref{e:BJohn} it suffices to show that there exists $\B$ depending only on $M$ such that
$$
\dist(x_j,x_N)+r_N\leq \B r_j.
$$
Using \eqref{e:ratiodisteta}, \eqref{e:distballbord} and recalling that $y_j$ belongs to both $Q_j$ and the image of $\gamma$, we have
\begin{equation}\label{e:rQleqdist}
r_N=\frac{1}{20} \delta(x_N)\leq \frac{1}{20} \dist(x_N,\mathcal{X}^c)\leq \frac{1}{10\eta} \dist(y_j,\mathcal{X}^c)\leq \frac{1}{10\eta}(\dist(x_j,\X^c)+r_j),
\end{equation}
while using \eqref{e:johncurve},
\begin{equation}\label{e:maxim}
\dist(x_j,x_N)\leq \dist(y_j,x_N)+r_j\leq \frac{1}{\eta}\dist(y_j,\X^c)+r_j\leq \frac{1}{\eta}(\dist(x_j,\X^c)+r_j)+r_j.
\end{equation}
Thus, \eqref{e:rQleqdist}, \eqref{e:maxim} and \eqref{e:distyjbord} yield
$$
\dist(x_j,x_N)+r_N\leq \frac{11\dist(x_j,\mathcal{X}^c)+21r_j}{10\eta} \leq  \frac{11c_M+32}{10\eta} r_j
$$
which proves \eqref{e:BJohn}.

Finally, let us show \eqref{e:conseccubeoverlap}. Fix $j\in\{0,\ldots,N-1\}$. We first prove that 
\begin{equation}\label{e:inclu}
B(y_j,\delta(y_j)/200)\subset Q_j\cap Q_{j+1}.
\end{equation}
By~\eqref{e:martinet}, we only need to show that $B(y_j,\delta(y_j)/200)\subset Q_{j+1}$. Let $z\in B(y_j,\delta(y_j)/200)$, then using that $\delta(y_{j+1})\geq \frac{L-1}{L} \delta(y_j)$ and \eqref{e:martinet}, we have
\begin{align*}
\dist(x_{j+1},z)&\leq \dist(x_{j+1},y_{j+1})+\dist(y_{j+1},y_j)+\dist(y_j,z)\\
&\leq \frac{1}{20}\delta(x_{j+1})-\frac{1}{100}\delta(y_{j+1})+\frac{1}{L}\delta(y_j)+\frac{1}{200}\delta(y_j)\\
&\leq \frac{1}{20}\delta(x_{j+1}),
\end{align*}
where recall that $L=1000$.
Since $Q_{j+1}=B(x_{j+1},\delta(x_{j+1})/20)$, this implies \eqref{e:inclu}. Using similarly several times the triangle inequality we obtain
\begin{equation}\label{e:qqq}
Q_j\cup Q_{j+1}\subset B(y_j,\delta(y_j)/2).
\end{equation}
Finally combining \eqref{e:inclu} and \eqref{e:qqq}, we deduce
$$
\rho(Q_j\cap Q_{j+1})\geq \rho(B(y_j,\delta(y_j)/200))\geq C\rho(B(y_j,\delta(y_j)/2))\geq C\max(\rho(Q_j),\rho(Q_{j+1}))
$$
where $C$ only depends on the doubling constant of the metric space.
\end{proof}

\begin{lemma}\cite[Lemma 3.3]{letrouitmerigot} \label{l:gluevarjohnmanifolds} 
Assume that $\rho$ is a probability measure on $\X$ which is the restriction to an open set $\X$ of a doubling measure on a metric space, and satisfies the Boman chain condition for some covering $\mathcal{F}$ and some $\A$, $\B$, $\C>1$. 
Then there exists $\kappa>0$ such that for any continuous function $f$ on $\X$, there holds 
$$
{\rm Var}_\rho(f)\leq \kappa\sum_{Q\in\mathcal{F}}\rho(Q)\Var_{\tilde{\rho}_Q}(f)
$$
where $\tilde{\rho}_Q=\frac{1}{\rho(Q)}\rho_{|Q}$.
\end{lemma}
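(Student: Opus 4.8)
The plan is to run the chaining argument of \cite[Lemma 3.3]{letrouitmerigot} (itself a metric--measure version of \cite[Lemma 2.1]{boman}), replacing the Lebesgue measure by the doubling measure $\rho$ throughout. Almost every step transcribes verbatim; the only delicate point is the combinatorial ``chaining inequality'' isolated below.

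First I would set up notation and reduce to a discrete estimate. Write $f_Q:=\rho(Q)^{-1}\int_Q f\,\dd\rho$ and $a_Q:=\Var_{\tilde{\rho}_Q}(f)^{1/2}$ for $Q\in\mathcal{F}$, and for each $Q$ fix a chain $C(Q)=\{Q_0,Q_1,\dots,Q_N=Q\}$ as in Definition~\ref{d:bomanmanifolds}. Since $\Var_\rho(f)\le\int_\X(f-f_{Q_0})^2\,\dd\rho$ and $\mathcal{F}$ covers $\X$ (so $\int_\X g\,\dd\rho\le\sum_{Q\in\mathcal{F}}\int_Q g\,\dd\rho$ for $g\ge0$), it is enough to bound $\sum_{Q\in\mathcal{F}}\int_Q(f-f_{Q_0})^2\,\dd\rho$. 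For $x\in Q$, telescoping along $C(Q)$ gives $|f(x)-f_{Q_0}|\le|f(x)-f_Q|+\sum_{j=0}^{N-1}|f_{Q_{j+1}}-f_{Q_j}|$; and by Cauchy--Schwarz together with the overlap bound \eqref{e:conseccubeoverlap} one gets, with $D_j:=Q_j\cap Q_{j+1}$, that $|f_{Q_j}-f_{D_j}|\le(\rho(Q_j)/\rho(D_j))^{1/2}a_{Q_j}\le\C^{1/2}a_{Q_j}$ and likewise for $Q_{j+1}$, hence $|f_{Q_{j+1}}-f_{Q_j}|\le\C^{1/2}(a_{Q_j}+a_{Q_{j+1}})$. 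Summing and using that the $Q_j$ are distinct yields $|f(x)-f_{Q_0}|\le|f(x)-f_Q|+2\C^{1/2}S(Q)$ where $S(Q):=\sum_{Q'\in C(Q)}a_{Q'}$. Squaring, integrating over $Q$ and summing over $\mathcal{F}$ (the term $|f-f_Q|^2$ contributes exactly $\sum_Q\rho(Q)a_Q^2$) reduces the lemma to the \emph{chaining inequality}
$$\sum_{Q\in\mathcal{F}}\rho(Q)\,S(Q)^2\ \le\ \kappa'\sum_{Q\in\mathcal{F}}\rho(Q)\,a_Q^2$$
with $\kappa'$ depending only on $\A$, $\B$, $\C$ and the doubling constant of $\rho$.

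Next I would prove the chaining inequality. The two structural facts I would exploit are: \textbf{(i)} by \eqref{e:BJohn}, every $Q'$ occurring in $C(Q)$ satisfies $Q\subset\B Q'$, hence $\rho(Q)\le\Lambda\,\rho(Q')$ by doubling with $\Lambda=\Lambda(\B,\text{doubling})$; dually, $\{Q\in\mathcal{F}:Q'\in C(Q)\}\subset\B Q'$, so by the bounded-overlap bound \eqref{e:A} one has $\sum_{Q:\,Q'\in C(Q)}\rho(Q)\le\A\,\rho(\B Q')\le\A\Lambda\,\rho(Q')$; \textbf{(ii)} combining these with doubling (and reverse doubling of $\rho$ on balls, valid since the ambient space is connected) one checks that in any single chain the number of balls whose $\rho$-measure lies in a fixed dyadic range is bounded by a constant, so that $\rho(Q_j)$ grows at least geometrically along $C(Q)$ as $j$ decreases. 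This geometric gain of scale makes a weighted Cauchy--Schwarz in $j$ summable---equivalently, it lets one run a Schur test for the ``chain operator'' $(a_{Q'})\mapsto(S(Q))$ on $\ell^2$ with weights $\rho(Q)$---and then interchanging the order of summation via the dual bound from (i) produces $\kappa'$. Feeding this back gives $\Var_\rho(f)\le(2+8\C\kappa')\sum_{Q\in\mathcal{F}}\rho(Q)\Var_{\tilde{\rho}_Q}(f)$, i.e.\ the claim with $\kappa:=2+8\C\kappa'$.

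The hard part will be step (ii): in contrast with the Euclidean Whitney decomposition used in \cite{letrouitmerigot}, neither the balls nor the measure are explicit here, so the geometric decay of scales along a chain must be extracted solely from the abstract properties \eqref{e:A}, \eqref{e:BJohn}, \eqref{e:conseccubeoverlap} and from doubling/reverse doubling of $\rho$. Once that is in hand, the rest is a routine transcription of the proof of \cite[Lemma 3.3]{letrouitmerigot}.
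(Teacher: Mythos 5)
The paper does not reprove this lemma: it cites \cite[Lemma 3.3]{letrouitmerigot} and observes that the Euclidean proof there is purely metric--measure-theoretic, hence carries over verbatim. Your first paragraph correctly reconstructs the standard reduction used in that argument: telescoping along the chain, the bound $|f_{Q_j}-f_{Q_j\cap Q_{j+1}}|\le \C^{1/2}a_{Q_j}$ from \eqref{e:conseccubeoverlap}, and the reduction of the lemma to the chaining inequality $\sum_{Q}\rho(Q)S(Q)^2\le\kappa'\sum_{Q'}\rho(Q')a_{Q'}^2$. Up to that point the proposal is sound and matches the cited proof.

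The gap is in your step (ii), which is indeed the crux and which you do not carry out. Two remarks. First, bounded multiplicity of each dyadic measure class within a single chain does follow from \eqref{e:A}, \eqref{e:BJohn} and doubling (all $\B Q_j$ contain the center of $Q$, so all $2Q_j$ in a fixed class lie in one ball of comparable measure, and \eqref{e:A} bounds their number); but this gives only bounded multiplicity, not monotone ``geometric growth of $\rho(Q_j)$'', and it yields only the \emph{forward} half of a Schur test, namely $\sum_{Q'\in C(Q)}(\rho(Q)/\rho(Q'))^{1/2-s}\le C$ for $s<1/2$. Second, and fatally, the dual half fails: for fixed $Q'$ the only information on $\{Q:Q'\in C(Q)\}$ extractable from the axioms is that each such $Q$ lies in $\B Q'$, whence $\rho(Q)\le\Lambda\rho(Q')$ and $\sum_{Q:Q'\in C(Q)}\rho(Q)\le\A\Lambda\rho(Q')$. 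Under these constraints alone there may be infinitely many such $Q$ with $\sum_{Q}\rho(Q)^{1/2+s}=+\infty$ whenever $1/2+s<1$ (e.g. $\rho(Q_n)\sim \rho(Q')/(n\log^2 n)$), so the dual Schur condition $\sum_{Q:Q'\in C(Q)}\rho(Q)^{1/2+s}\le C\rho(Q')^{1/2+s}$ forces $s>1/2$, incompatible with the forward condition. Hence no Schur test of the kind you describe closes: the chain operator is $\ell^2$-bounded, but not for reasons visible at the level of these marginal bounds. The cited proof supplies the missing idea via duality and the Hardy--Littlewood maximal operator $\mathcal{M}$ on the doubling space: one writes $\sum_{Q:Q'\in C(Q)}\rho(Q)b_Q\le\int_{\B Q'}g\,\dd\rho$ with $g=\sum_Q b_Q\chi_Q$, bounds $\int_{\B Q'}g\,\dd\rho\le\rho(\B Q')\inf_{Q'}\mathcal{M}g\le\Lambda\int_{Q'}\mathcal{M}g\,\dd\rho$, and concludes with $\|\mathcal{M}g\|_{L^2(\rho)}\lesssim\|g\|_{L^2(\rho)}$ together with \eqref{e:A} to control $\|g\|_{L^2(\rho)}^2$ and $\|\sum_{Q'}a_{Q'}\chi_{Q'}\|_{L^2(\rho)}^2$ by $\A\sum\rho(Q)b_Q^2$ and $\A\sum\rho(Q')a_{Q'}^2$. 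Without this (or an equivalent) your argument does not close.
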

This lemma is proved in \cite[Section 3.2 and Appendix A]{letrouitmerigot} for $M=\R^d$ but the proof carries over without any change to any smooth, $d$-dimensional Riemannian manifold $M$. 

\subsubsection{Strongly convex functions}
Another preliminary step in proving Proposition \ref{p:stabpote} is to prove that the function $z\mapsto \dist(z,x)^2$ is $\theta$-strongly convex in some ball $B(x,R)$, where both $\theta$ and $R$ are uniform over $x\in\X$. 
\begin{proposition} \label{p:strongconvfunc}
There exist $\theta,R>0$ such that for any $x\in\X$, the function $z\mapsto \dist(z,x)^2$ is $\theta$-strongly convex in $B(x,R)$. 
\end{proposition}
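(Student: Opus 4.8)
The plan is to localise the problem to a fixed compact region and then apply the Hessian comparison theorem. Since $\X$ is a John domain it is bounded, so by the Hopf--Rinow theorem (here we use that $M$ is complete) its closure $\overline{\X}$ is compact, and hence so is the closed unit neighbourhood $K:=\{z\in M : \dist(z,\overline{\X})\leq 1\}$. By classical Riemannian geometry and compactness of $K$, there are constants $i_0>0$, $\Lambda>0$ and $r_c>0$ such that for every $x\in K$ the injectivity radius at $x$ is at least $i_0$, the sectional curvatures at $x$ are bounded above by $\Lambda$ (enlarging $\Lambda$ if necessary so that $\Lambda>0$), and every metric ball $B(x,r_c)$ is geodesically convex, in the sense that a minimizing geodesic between two of its points stays inside it. I would then set
\[
R:=\min\Bigl(\tfrac{i_0}{2},\ \tfrac{1}{2\sqrt{\Lambda}},\ r_c,\ \tfrac12\Bigr),
\]
so that for every $x\in\X$ the ball $B(x,R)$ is contained in $K$ and in the injectivity-radius ball of $x$; in particular $z\mapsto\dist(z,x)^2$ is smooth on $B(x,R)$, and $B(x,R)$ is geodesically convex.

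Fix $x\in\X$ and write $r:=\dist(\cdot,x)$, a smooth function on $B(x,R)\setminus\{x\}$ with $|\nabla r|\equiv1$. The first step is the identity $\nabla^2(r^2/2)=r\,\nabla^2 r+\dd r\otimes\dd r$. Decomposing $v\in T_zM$ as $v=a\,\nabla r+w$ with $w\perp\nabla r$, and using $\nabla^2 r(\nabla r,\cdot)=0$ together with the Hessian comparison theorem under $\mathrm{sec}\leq\Lambda$, which gives $\nabla^2 r\geq \sqrt{\Lambda}\cot(\sqrt{\Lambda}\,r)\,(g-\dd r\otimes \dd r)$ on $B(x,\pi/\sqrt{\Lambda})$, I obtain
\[
\nabla^2(r^2/2)(v,v)\ \geq\ r\sqrt{\Lambda}\cot(\sqrt{\Lambda}\,r)\,|w|^2+a^2\ \geq\ \min\bigl(1,\ t\cot t\bigr)\,|v|^2,\qquad t:=\sqrt{\Lambda}\,r.
\]
Since $t\mapsto t\cot t$ is decreasing on $(0,\pi)$ with $t\cot t\geq\cot 1>\tfrac12$ for $0<t\leq1$, and $\sqrt{\Lambda}\,r\leq\sqrt{\Lambda}\,R\leq1$ on $B(x,R)$, this yields $\nabla^2(\dist(\cdot,x)^2)\geq g$ on $B(x,R)$ (the value at $z=x$ being the standard $2\,g_x$). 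As $B(x,R)$ is geodesically convex, for any geodesic $\gamma$ joining two of its points the function $t\mapsto\dist(\gamma(t),x)^2$ has second derivative at least $|\dot\gamma|^2$; that is, $z\mapsto\dist(z,x)^2$ is $\theta$-strongly convex on $B(x,R)$ with $\theta=1$, uniformly in $x\in\X$.

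The only genuine point requiring care is the \emph{uniformity} of the geometric constants $i_0$, $\Lambda$, $r_c$ over all $x\in\X$: since $M$ is not assumed compact one cannot simply invoke compactness of $M$, but the John-domain hypothesis makes $\X$ bounded and completeness makes $\overline{\X}$ (hence $K$) compact, after which uniform lower bounds on the injectivity and convexity radii and a uniform upper bound on curvature are classical (they follow, e.g., from continuity/lower semicontinuity of these quantities on the compact set $K$). Everything else --- the Hessian comparison estimate and the elementary inequality $t\cot t\geq\tfrac12$ on $(0,1]$ --- is routine, so no further difficulty arises.
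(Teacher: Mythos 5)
Your proof is correct, and it takes a genuinely different route from the paper. The paper works directly with the exponential map: it writes $\tfrac12\dist(\cdot,x)^2=h\circ\exp_x^{-1}$ with $h(v)=\tfrac12|v|_{g_x}^2$, Taylor-expands along geodesics to identify the Hessian at $z$ as $g_x(D\ell(z)\cdot,D\ell(z)\cdot)+O(|\ell(z)|)$ where $\ell=\exp_x^{-1}$, deduces a positive lower bound $\theta_x$ on small balls, and then appeals to smoothness of $g$ and compactness of $\overline{\X}$ to make $\theta_x$ and $R$ uniform. You instead invoke the Hessian comparison theorem under a uniform upper sectional curvature bound on a compact neighbourhood $K$ of $\overline{\X}$, together with the identity $\nabla^2(r^2/2)=r\,\nabla^2 r+\dd r\otimes\dd r$ and the elementary bound $t\cot t>\tfrac12$ on $(0,1]$. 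What your approach buys is an explicit, quantitative constant ($\theta=1$ for $\dist^2$) and a cleaner treatment of the two uniformity issues that the paper glosses over: the paper's ``fix $R<\mathrm{inj}(M)$'' is not meaningful on a non-compact $M$ (where $\mathrm{inj}(M)$ may vanish), whereas you correctly localise to the compact set $K$ and use lower semicontinuity of the injectivity radius; and you explicitly record the geodesic convexity of $B(x,R)$, which is needed (and left implicit in the paper) to convert the pointwise Hessian bound into strong convexity along geodesics joining points of the ball. The cost is reliance on the comparison theorem, a slightly heavier tool than the paper's bare-hands expansion, but both arguments rest on the same compactness mechanism in the end.
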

\begin{proof}
Fix $x\in\X$ and let $V_x:z\mapsto \frac12 d(z,x)^2$. Also fix $R<{\rm inj}(M)$, denote by $B_x\subset T_xM$ the ball of radius $R$ centered at the origin of $T_xM$, and by $B(x,R)\subset M$ the ball of radius $R$ centered at $x$ in $M$. Let $h:B_x\rightarrow \R$ be defined by $h(v):= \frac12 |v|^2_{g_x}$. The exponential map at $x$ is a $C^\infty$ diffeomorphism from $B_x$ to $B(x,R)$, whose inverse is denoted by $\ell:B(x,R)\rightarrow B_x$; since the exponential map is an isometry in the radial direction, we have $V_x=h\circ \ell$. Since $h$ is also smooth, we find that $V_x=h\circ \ell$ is smooth in $B(x,R)$. \\ 
Let $z\in B(x,R)$, $v\in T_zM$, and let $(\gamma_t)$ denote the geodesic with $\gamma_0=z$ and $\left.\frac{\dd \gamma_t}{\dd t}\right\vert_{t=0}=v$. Then
\begin{equation}\label{e:cettehessienne}
\begin{aligned}
&h(\ell(\gamma_t))-h(\ell(\gamma_0))\\ &\qquad=\frac12 \Bigl|\ell(z)+tD\ell(z)v + \frac{t^2}{2}D^2\ell(z)(v,v)+O(t^3)\Bigr|_{g_x}^2-\frac12 |\ell(z)|^2_{g_x}\\
&\qquad =tg_x(\ell(z),D\ell(z)v) + \frac12 t^2\left(|D\ell(z) v|_{g_x}^2+\langle \ell(z),D^2\ell(z)(v,v)\rangle_{g_x}\right)+O(t^3)
\end{aligned}
\end{equation}
as $t\rightarrow 0$.
We have $\ell(x)=0$, and the Hessian of $V_x=h\circ\ell$ at $z$ is $g_x(D\ell(z)\cdot, D\ell(z)\cdot)+O(|\ell(z)|)$ as a consequence of smoothness of $\ell$ and \eqref{e:cettehessienne}. We deduce that ${\rm Hess}(V_x)\geq \theta_x {\rm Id}$ in $B_x$, for some $\theta_x>0$. Since $g$ is a smooth metric, we see that $\theta_x$ may even be chosen independent of $x$ in sufficiently small balls. Thus using compactness of $\overline{\X}$, we obtain the result.
\end{proof}

\subsubsection{End of the proof of Proposition \ref{p:stabpote}} \label{sec: potential stability proof}
We assume $c(x,y)=\frac12 \dist(x,y)^2$. Our proof of Proposition \ref{p:stabpote} consists in obtaining bounds on the variance in balls $Q_i$ (whose radius is sufficiently small, in order to guarantee existence of a strongly convex function) with respect to some measure $\rho_i^V$ defined in \eqref{e:rhoiV}, and then gluing these local variance bounds together to obtain a bound on the variance with respect to $\rho$ via Lemma \ref{l:gluevarjohnmanifolds}.

Let $\mathcal{F}$ be an open cover of $\X$ given by Proposition~\ref{prop: boman chain construction}, with $R$ chosen as in Proposition \ref{p:strongconvfunc}. Since $\mathcal{F}$ is a countable set of balls, we may consider an enumeration $\mathcal{F}=\{Q_i\}_{i=1}^\infty$, where $Q_i=B(x_i,r_i)$, $x_i\in\X$, and $r_i\leq R$. We set $\rho_i:=\frac{1}{\rho(Q_i)}\rho_{|Q_i}$. Since $r_i\leq R$, the function $z\mapsto \dist(z,x_i)^2$ is $\theta$-strongly convex on $Q_i$ according to Proposition \ref{p:strongconvfunc}. We choose $K>0$ large enough so that $V_i=K\dist(\cdot,x_i)^2$ satisfies
\begin{equation}\label{e:hess+ric}
{\rm Hess\ } V_i+{\rm Ric}\geq \lambda
\end{equation} 
on $Q_i$. Notice that since ${\rm Ric}$ is uniformly bounded below, $K$ may be chosen independent of $i$.
We then consider
\begin{equation}\label{e:rhoiV}
\rho^V_i:=\frac{\exponentiate{-V_i}\chi_{Q_i}}{\int_{Q_i}\exponentiate{-V_i}{\rm dvol}}\dd {\rm vol}
\end{equation}
which is a log-concave probability measure on $Q_i$.
Since
$$
\frac{m_\rho}{M_\rho}\dd {\rm vol}\leq \rho_i {\rm vol}(Q_i)\leq \frac{M_\rho}{m_\rho}\dd {\rm vol} \qquad \text{and} \qquad \exponentiate{-Kr_i^2}\dd {\rm vol}\leq \rho^V_i {\rm vol}(Q_i)\leq \exponentiate{Kr_i^2}\dd {\rm vol}
$$
on $Q_i$, we have
\begin{equation}\label{e:comparrhoi}
\E^{-1}\rho_i\leq \rho^V_i \leq \E \rho_i,\qquad \E:=\exponentiate{K R^2}\frac{M_\rho}{m_\rho}.
\end{equation}

In the sequel, when applying the results of Section \ref{s:convkantoreg} we will fix $\sigma$ to be a Borel probability measure whose support coincides with $\Y$ (the existence of such $\sigma$ is easily seen by picking a dense and countable set in $\Y$, and constructing $\sigma$ as a sum of weighted Dirac deltas on this set). Using \eqref{eq:diffpsieps} we compute
\begin{equation} \label{e:psi1psi0eps}
\begin{aligned}
\Var_{\rho^V_i}(\psi_1^{c,\varepsilon}-\psi_0^{c,\varepsilon})=\Var_{\rho^V_i}\Bigl(\int_0^1 \frac{\dd}{\dd t}\psi_t^{c,\varepsilon} \dd t\Bigr)&\leq \int_0^1\Var_{\rho^V_i}\Bigl(\frac{\dd}{\dd t}\psi_t^{c,\varepsilon}\Bigr)\dd t\\
&=\int_0^1 \Var_{\rho^V_i}(x\mapsto \sca{\mu_\eps^x[\psi_t]}{v})\dd t.
\end{aligned}
\end{equation}
The next step is to use Proposition \ref{p:D2K}; to do so we must verify Assumption \ref{a:assumptions}. 
The fact that the cost $c(x,y)=\frac12 \dist(x,y)^2$ is semi-concave follows from \cite[Lemma 3.3]{ohta} (using that $\X$ is bounded) since any Riemannian manifold is an Alexandrov space with same curvature bounds,
 while \eqref{e:hess+ric} implies the bound on the $\infty$-Bakry--Emery tensor, and we take $U=Q_i\subset M$ for each $i$ which is geodesically convex, thus Assumption \ref{a:assumptions} is verified with uniform constants.

Hence using \eqref{e:psi1psi0eps}, Proposition \ref{p:D2K} (and its constant $C_0$) and the identity $\sca{D^2\mathcal{K}^\varepsilon_{\rho^V_i}[\psi_t]v}{v}=\frac{\dd}{\dd t}\sca{\nabla \mathcal{K}_{\rho^V_i}^\varepsilon[\psi_t]}{v}$ where $v=\psi_1-\psi_0$, we get
\begin{equation}\label{e:Varrhoi}
\begin{aligned}
\Var_{\rho^V_i}(\psi_1^{c,\varepsilon}-\psi_0^{c,\varepsilon})\leq -C_0^2\int_0^1\langle D^2\mathcal{K}^\varepsilon_{\rho^V_i}[\psi_t]v,v\rangle \dd t&=C_0^2\langle \nabla \mathcal{K}_{\rho^V_i}^\eps[\psi_0]-\nabla \mathcal{K}_{\rho^V_i}^\eps[\psi_1],\psi_1-\psi_0\rangle\\
&=C_0^2 \sca{\mu_{\eps, i}^V[\psi_1]-\mu_{\eps, i}^V[\psi_0]}{\psi_1-\psi_0}
\end{aligned}
\end{equation}
where in the last line we used Lemma \ref{l:gradient} and $\mu_{\eps, i}^V[\psi]$ is defined by
\begin{equation}\label{e:muepsi}
 \forall v \in\Class^0(M), \quad \sca{\mu_{\eps, i}^V[\psi]}{v} := \int_{Q_i} \sca{\mu_\eps^x[\psi]}{v} \dd \rho^V_i(x).
\end{equation}
We sum \eqref{e:Varrhoi} with a weight $\delta_i^V:=\int_{Q_i}\exponentiate{-V_i}\dd {\rm vol}$, we get
\begin{equation}\label{e:deltaikrhoi}
\begin{aligned}
\sum_{Q_i\in\mathcal{F}} \delta_i^V\Var_{\rho^V_i}(\psi_1^{c,\varepsilon}-\psi_0^{c,\varepsilon})&\leq C_0^2\sum_{Q_i\in\mathcal{F}} \delta_i^V  \sca{\mu_{\eps, i}^V[\psi_1]-\mu_{\eps, i}^V[\psi_0]}{\psi_1-\psi_0}\\
&=C_0^2\sum_{Q_i\in\mathcal{F}} \int_{Q_i}\sca{\mu_\varepsilon^x[\psi_1]-\mu_\varepsilon^x[\psi_0]}{\psi_1-\psi_0} \exponentiate{-V_i(x)}\dd {\rm vol}(x)\\
&\leq \exponentiate{KR^2}m_\rho^{-1}C_0^2\sum_{Q_i\in \mathcal{F}} \int_{Q_i}\sca{\mu_\varepsilon^x[\psi_1]-\mu_\varepsilon^x[\psi_0]}{\psi_1-\psi_0} \dd \rho(x)\\
&\leq A\E C_0^2\sca{\mu_{\eps}[\psi_1]-\mu_{\eps}[\psi_0]}{\psi_1-\psi_0},
\end{aligned}
\end{equation}
where we recall $\sca{\mu_\eps[\psi]}{v} = \int_X \sca{\mu_\eps^x[\psi]}{v} \dd \rho(x)$.
To pass from the second to third line we have used that for any $x\in\X$,
$$
\langle \mu_\varepsilon^x[\psi_1]-\mu_\varepsilon^x[\psi_0],\psi_1-\psi_0\rangle\geq 0,
$$
which follows from the concavity of $\mathcal{K}^\eps_{\delta_x}$ (obtained by applying Lemma \ref{l:hesskanto} with the choice $\rho=\delta_x$) by a calculation similar to \eqref{e:Varrhoi}. Using \eqref{e:comparrhoi}, it follows from \eqref{e:deltaikrhoi} that 
$$
\sum_{Q_i\in \mathcal{F}} \rho(Q_i)\Var_{\rho_i}(\psi_1^{c,\varepsilon}-\psi_0^{c,\varepsilon})\leq A\E^3 C_0^2\sca{\mu_{\eps}[\psi_1]-\mu_{\eps}[\psi_0]}{\psi_1-\psi_0}.
$$

Thanks to Lemma \ref{l:gluevarjohnmanifolds}, we deduce that
$$
\Var_\rho(\psi_1^{c,\varepsilon}-\psi_0^{c,\varepsilon})\leq \kappa A\E^3C_0^2\langle \mu_{\eps}[\psi_1]-\mu_{\eps}[\psi_0], \psi_1-\psi_0\rangle,
$$
we will now take the limit $\varepsilon\rightarrow 0$ in this inequality. For $i=1$, $2$, we have $\psi_i^{c,\varepsilon}\rightarrow \psi_i^c$ pointwise as $\varepsilon\rightarrow 0$ (see Remark \ref{r:convergence}). Since $\psi_i^{c,\varepsilon}$ are Lipschitz uniformly in $\varepsilon$, we obtain that the convergence is uniform over any compact set, in particular over $x$ in the support of $\rho$, hence 
$$
\Var_\rho(\psi_1^{c,\varepsilon}-\psi_0^{c,\varepsilon})\to \Var_\rho(\psi_1^{c}-\psi_0^{c})
$$
as $\varepsilon\to 0$. Let us show that $\mu_\eps[\psi_i]\rightarrow  (y_{\psi_i})_{\#}\rho$ as $\varepsilon\rightarrow 0$. For this, we consider $\gamma^\varepsilon_i\in \mathcal{P}(\X\times \Y)$ defined by 
$$
\frac{\dd\gamma^\varepsilon_i}{\dd(\rho\otimes\sigma)}(x,y)= \hat{\mu}_\eps^x[\psi_i](y).
$$
This means that for any continuous function $\eta$ on $\Class^0(\overline{\X}\times \Y),$ we have
\begin{align*}
\int_{\X\times \Y} \eta(x,y) \dd \gamma^\varepsilon_i(x,y) 
&= \int_{\X\times \Y} \eta(x,y) \hat{\mu}_\eps^x[\psi_i] \dd[\rho\otimes \sigma](x,y)\\
&= \int_{\X} \int_\Y \eta(x,y) \frac{\exponentiate{-\frac{c(x,y) - \psi_i(y)}{\eps}}}
{\int_\Y \exponentiate{-\frac{c(x,z) - \psi_i(z)}{\eps}} \dd\sigma(z)} \dd\sigma(y) \dd \rho(x)  
\end{align*}
By the Laplace principle, and using that $\sigma$ has full support, we obtain that for every $x\in \X$ such that the minimum of $y\mapsto c(x,y) - \psi_i(y)$ is attained at a unique point $y_{\psi_i}(x)$, we have 
$$ \lim_{\eps\to 0} \int_\Y \eta(x,y) \frac{\exponentiate{-\frac{c(x,y) - \psi_i(y)}{\eps}}}
{\int_\Y \exponentiate{-\frac{c(x,z) - \psi_i(z)}{\eps}} \dd\sigma(z)} \dd\sigma(y) = \eta(x,y_{\psi_i}(x)).$$
Moreover (see e.g. \cite[Lemmas 7 and 4]{mccann}), this minimum is attained at a unique point for $\rho$-almost every $x$. Using the dominated convergence theorem we obtain that 
$$ \lim_{\eps\to 0} \int_{\X\times\Y} \eta(x,y) \dd \gamma^\varepsilon_i(x,y) = \int_\X \eta(x,y_{\psi_i}(x)) d\rho(x),  $$
thus proving that $\gamma^\varepsilon_i$ converges narrowly to $\gamma_i = (\Id,y_{\psi_i})_{\#}\rho.$
This implies that the first marginal  $\mu_{\eps}[\psi_i]$ of $\gamma_i^\eps$ converges narrowly to $\mu_i = y_{\psi_i\#}\rho$ for $i\in\{0,1\}$, so that 
$$ \lim_{\eps\to 0} \sca{\mu_{\eps}[\psi_1] - \mu_{\eps}[\psi_0]}{\psi_1 - \psi_0} = \sca{\mu_0 - \mu_1}{\psi_1-\psi_0}.$$

\section{Stability of optimal transport maps} \label{s:stabmapmanifo}
To complete the proof of Theorem \ref{t:stabpote2}, we show in this section that \eqref{e:mapsstab} holds. The proof consists of two steps. In the first step, we notice that
\begin{equation}\label{e:dilatedist}
\forall x\in \X, \qquad \dist(\exp_x(-\nabla \phi_\mu(x)),\exp_x(-\nabla \phi_\nu(x)))\leq C |\nabla \phi_\mu(x)-\nabla \phi_\nu(x)|_{g_x}
\end{equation}
for some $C>0$ which only depends on $\X,\Y$.
Indeed, since the norms of $\nabla \phi_\mu$ and $\nabla \phi_\nu$ are pointwise bounded by $L={\rm diam}(\Y)$ (according to \cite[Lemma 2]{mccann}), \eqref{e:dilatedist} is a direct consequence of the smoothness of the exponential map and compactness of the set of tangent vectors of norm bounded by $L$ above $\X$, since $\X$ is bounded.

In the second step of the proof of \eqref{e:mapsstab}, we prove the inequality
\begin{equation}\label{e:gn}
\int_{\X}|\nabla \phi_\mu(x)-\nabla\phi_\nu(x)|_{g_x}^2 d\rho(x)\leq C \left(\int_{\X} |\phi_\mu(x)-\phi_\nu(x)|^2  d\rho(x)\right)^{1/3}
\end{equation}
where $C$ is a constant which depends only on $\X$ and $\rho$. We consider the geodesic flow at time $s$ on the sphere bundle $SM$, denoted by $\Phi_s: SM\rightarrow SM$. We will also decompose $\Phi_s$ as $\Phi_s(x,v)=(b_s(x,v),t_s(x,v))$ with $b_s(x,v)\in M$ (the base point) and $t_s(x,v)\in S_{b_s(x,v)}M$ (the tangent vector). Fix $T>0$ with $T<{\rm inj}(M)/2$; the latter condition will be used in the proof of Lemma \ref{l:semicon} below. For a fixed $(x,v)\in SM$, we write the parameters where $\X$ intersects the image of a geodesic with initial conditions $(x, v)$ on the interval $[0, T]$, as a union of open intervals indexed by a set $I_T^\X(x,v)$:
$$
\{s\in [0,T] \mid b_s(x,v)\in \X\}= \bigcup_{i\in I_T^\X(x,v)} (\alpha_i(x,v),\beta_i(x,v)).
$$
Let
$$
f(x,v):=
\begin{cases} 
g(\nabla \phi_\mu(x)-\nabla\phi_\nu(x), v)^2, & x\in\X,\\
0,& \text{otherwise}.
\end{cases}
$$
We notice that $\int_{S_xM}f(x,v)dv=c_d|\nabla \phi_\mu(x)-\nabla\phi_\nu(x)|_{g_x}^2$ for some constant $c_d$ which only depends on the dimension $d$ (and in particular, is independent of $x$). 
Using the invariance of the Liouville measure $m$ on $SM$ under the geodesic flow $(\Phi_s)$, and the fact that the density of $\rho$ is bounded above by a positive constant, we obtain
\begin{equation}\label{e:nablaphi1}
\begin{aligned}
&\int_{\X} |\nabla\phi_\mu(x)-\nabla\phi_\nu(x)|_{g_x}^2\dd\rho(x)\\
&\qquad\leq C\int_{SM}f(x,v)\dd m(x,v)\\
&\qquad=CT^{-1}\int_{SM}\int_0^T f(\Phi_s(x,v))\dd s\dd m(x,v)\\
&\qquad=CT^{-1}\int_{SM} \sum_{i\in I_T^\X(x,v)} \int_{\alpha_i(x,v)}^{\beta_i(x,v)}g( \nabla\phi_\mu(b_s(x,v))-\nabla\phi_\nu(b_s(x,v)), t_s(x,v))^2\dd s\dd m(x,v).
\end{aligned}
\end{equation}
For $(x,v)\in SM$, we let $u_\mu^{(x,v)}(s)=\phi_\mu(b_s(x,v))$ for $s\in[0,T]$. Then 
$$
\frac{\dd}{\dd s}u_\mu^{(x,v)}(s)=g( \nabla\phi_\mu(b_s(x,v)), t_s(x,v)),
$$
and an analogous formula holds for $u_\nu^{(x,v)}(s)=\phi_\nu(b_s(x,v))$. Denoting by $\X_T$ the set of $x\in M$ at distance at most $T$ from the set $\X$, we notice that in the last line of \eqref{e:nablaphi1}, only the elements $(x,v)\in SM$ for which $x \in\X_T$ have a non-vanishing contribution.
\begin{lemma}\label{l:semicon}
There exists $\zeta$ independent of $(x,v)\in S\X_T$ such that for any $(x,v)\in S\X_T$, the function $s\mapsto u^{(x,v)}_\mu(s)-\zeta|s|^2$ is concave on $[0,T]$ and the modulus of its derivative (which exists a.e. on $[0, T]$ as a result) is bounded above by ${\rm diam}(\mathcal{Y})+2\zeta T$ on $[0,T]$.
\end{lemma}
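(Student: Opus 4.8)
The plan is to exploit that $\phi_\mu$ is a $c$-transform, so that $u^{(x,v)}_\mu$ is an infimum of functions that are semiconcave along the geodesic $s\mapsto b_s(x,v)$ with a semiconcavity modulus \emph{uniform} in $(x,v)\in S\X_T$ and in the point $y$ over which the infimum is taken.

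First I would recall, via McCann's theorem (Theorem~\ref{t:mccann}), that $\phi_\mu=\psi^c$ for some $\psi:\Y\to\R$; replacing $\psi$ by $\phi_\mu^c$ and using compactness of $\Y$ together with boundedness of $c$ on the relevant set, one may take $\psi$ bounded, so that $\phi_\mu(z)=\inf_{y\in\Y}\bigl(\tfrac12\dist(z,y)^2-\psi(y)\bigr)$ for every $z\in M$ (and likewise for $\phi_\nu$). Fix $(x,v)\in S\X_T$. The curve $s\mapsto b_s(x,v)$ is a unit-speed geodesic, and since $T<{\rm inj}(M)/2$, for all $0\le s_0<s_1\le T$ its restriction to $[s_0,s_1]$ is the minimizing geodesic from $b_{s_0}(x,v)$ to $b_{s_1}(x,v)$, of length $s_1-s_0$; moreover this arc stays inside $\X_{2T}$, which is bounded and hence relatively compact in the complete manifold $M$ by the Hopf--Rinow theorem.

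Next I would invoke the semiconcavity of the squared Riemannian distance (\cite[Lemma~3.3]{ohta}, as used in the proof of Proposition~\ref{p:stabpote}): since $\overline{\X_{2T}}$ and $\Y$ are compact, the constant $\lambda$ in \eqref{eq:semi-convexity-distance} can be chosen uniform for $z\in\overline{\X_{2T}}$ and $y\in\Y$. Writing \eqref{eq:semi-convexity-distance} along the minimizing sub-arcs of $s\mapsto b_s(x,v)$ shows that for every fixed $y\in\Y$ the map $s\mapsto\tfrac12\dist(b_s(x,v),y)^2-\tfrac\lambda2 s^2$ is concave on $[0,T]$. Since an infimum of concave functions is concave and $\psi$ is bounded, it follows that
$$
s\ \mapsto\ u^{(x,v)}_\mu(s)-\tfrac\lambda2 s^2=\inf_{y\in\Y}\Bigl(\tfrac12\dist(b_s(x,v),y)^2-\psi(y)-\tfrac\lambda2 s^2\Bigr)
$$
is concave on $[0,T]$, which proves the first assertion with $\zeta:=\lambda/2$, independent of $(x,v)\in S\X_T$. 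For the last claim, I would use that $|\nabla\phi_\mu|\le{\rm diam}(\Y)$ pointwise (recalled just before \eqref{e:dilatedist}, from \cite[Lemma~2]{mccann}): then $\phi_\mu$ is ${\rm diam}(\Y)$-Lipschitz, and since $s\mapsto b_s(x,v)$ has unit speed, $u^{(x,v)}_\mu$ is ${\rm diam}(\Y)$-Lipschitz on $[0,T]$; as a concave function is differentiable a.e., this gives, for a.e. $s\in[0,T]$,
$$
\Bigl|\tfrac{\dd}{\dd s}\bigl(u^{(x,v)}_\mu(s)-\zeta s^2\bigr)\Bigr|\ \le\ \Bigl|\tfrac{\dd}{\dd s}u^{(x,v)}_\mu(s)\Bigr|+2\zeta|s|\ \le\ {\rm diam}(\Y)+2\zeta T.
$$
The argument for $u^{(x,v)}_\nu$ is identical.

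I do not expect a genuine obstacle. The only point demanding care is the \emph{uniformity} of $\lambda$ (hence of $\zeta$) over $y\in\Y$, over $(x,v)\in S\X_T$, and along each geodesic; this reduces to $\X$ being bounded (so $\X_{2T}$ is relatively compact by Hopf--Rinow), to the squared Riemannian distance being uniformly semiconcave on compact subsets of $M$, and to $T<{\rm inj}(M)/2$ forcing the relevant sub-arcs of $s\mapsto b_s(x,v)$ to be minimizing, so that \eqref{eq:semi-convexity-distance} may be applied to them.
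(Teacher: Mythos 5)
Your argument is correct and essentially identical to the paper's: both write $u_\mu^{(x,v)}$ as an infimum over $y\in\Y$ of $\tfrac12\dist(b_s(x,v),y)^2-\psi_\mu(y)$, apply the uniform semiconcavity of the squared distance from \cite[Lemma 3.3]{ohta} along the unit-speed geodesic to make each such function concave after subtracting $\zeta s^2$, pass to the infimum, and then use the ${\rm diam}(\Y)$-Lipschitz bound on $\phi_\mu$ from McCann for the derivative estimate. The extra care you take about minimizing sub-arcs and uniformity of the semiconcavity constant is exactly the point the paper's proof relies on implicitly.
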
 
\begin{proof}
Note that 
\begin{align*}
    u_\mu^{(x, v)}(s)=\inf_{y\in \mathcal{Y}}(\frac{1}{2}\dist(b_s(x, v), y)^2-\psi_\mu(y)).
\end{align*}
Then since the sectional curvature of $M$ is bounded from below on the bounded set $\X_T$, by \cite[Lemma 3.3]{ohta} there exists a $\tilde\zeta\in \R$ independent of $(x,v)\in S\X_T$ and $y\in\mathcal{Y}$ such that
\begin{align*}
    \dist(b_s(x, v), y)^2
    &\geq \left(1-\frac{s}{T}\right)\dist(x, y)^2+\left(\frac{s}{T}\right)\dist(b_T(x, v), y)^2-\tilde\zeta\left(1-\frac{s}{T}\right)\left(\frac{s}{T}\right)\dist(x, b_T(x, v))^2\\
    &= \left(1-\frac{s}{T}\right)\dist(x, y)^2+\left(\frac{s}{T}\right)\dist(b_T(x, v), y)^2-\tilde\zeta\left(1-\frac{s}{T}\right)\left(\frac{s}{T}\right)T^2,
\end{align*}
where we have used that $s\mapsto b_s(x, v)$ is a unit speed geodesic in $M$ starting from $x$ in the last line. Thus if we let $\zeta:=\tilde \zeta/2$, we see the function $s\mapsto \frac{1}{2}\dist(b_s(x, v), y)^2-\psi_\mu(y)-\zeta\lvert s\rvert^2$ is concave on $[0, T]$ for any $y\in \mathcal{Y}$. Thus as an infimum of concave functions, we find $s\mapsto u_\mu^{(x, v)}(s)-\zeta\lvert s\rvert^2$ is also concave on $[0, T]$.

Using that $\phi_\mu$ is ${\rm diam}(\mathcal{Y})$-Lipschitz (see \cite[Lemma 1]{mccann}), we obtain the bound on the modulus of the derivative.
\end{proof}
We also recall the following result (for a proof, see \cite[Lemma 5.1]{delalandemerigot}).
\begin{lemma}\label{l:1d}
Let $I\subset\R$ be a compact segment and let $u$, $v:I\rightarrow\R$ be two
convex functions whose derivatives (defined a.e. on $I$) are uniformly bounded on $I$. Then
$$
\|u'-v'\|_{L^2(I)}^2\leq 8(\|u'\|_{L^\infty(I)}+\|v'\|_{L^\infty(I)})^{4/3}\|u-v\|_{L^2(I)}^{2/3}.
$$
\end{lemma}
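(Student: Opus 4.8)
The plan is to establish a pointwise differential inequality comparing $w':=u'-v'$ with a centered difference quotient of $w:=u-v$ and the convexity defects of $u$ and $v$, then integrate this over $I$ and optimize over a free length scale $h>0$. The hard part is the pointwise step; everything afterwards is bookkeeping together with an elementary one‑variable minimization that produces the exponents $4/3$ and $2/3$.

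First I would fix $h>0$ and a point $x$ of differentiability of both $u$ and $v$ with $[x-h,x+h]\subset I$. Convexity of $u$ gives the chord/tangent inequalities $\frac{u(x)-u(x-h)}{h}\le u'(x)\le \frac{u(x+h)-u(x)}{h}$, and likewise for $v$. Expanding $u(x\pm h)-u(x)$ in terms of the centered quotient $\delta_h u(x):=\frac{u(x+h)-u(x-h)}{2h}$ and the nonnegative second difference $D_h u(x):=u(x+h)-2u(x)+u(x-h)$, these four inequalities combine to
\[
\Bigl|\,w'(x)-\tfrac{w(x+h)-w(x-h)}{2h}\,\Bigr|\le \frac{D_h u(x)+D_h v(x)}{2h}.
\]
This is the only place where convexity is used in an essential way.

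Then I would square this (using $(a+b)^2\le 2a^2+2b^2$) and integrate over $I_h:=\{x\in I:[x-h,x+h]\subset I\}$. The difference‑quotient contribution is controlled by $\frac{1}{2h^2}\int_{I_h}(w(x+h)-w(x-h))^2\,dx\le \frac{2}{h^2}\|w\|_{L^2(I)}^2$ after translating the integration variable. For the defect contribution I would use the two elementary bounds $0\le D_h u\le 2h\|u'\|_{L^\infty}$ pointwise and $\int_{I_h}D_h u\,dx\le 2h^2\|u'\|_{L^\infty}$ (the latter by writing $D_h u$ as an iterated first difference and estimating the boundary slivers through $\|u'\|_{L^\infty}$); multiplying these gives $\int_{I_h}(D_h u)^2\le 4h^3\|u'\|_{L^\infty}^2$, so the defect part is at most $4h(\|u'\|_{L^\infty}+\|v'\|_{L^\infty})^2$. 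Since $|I\setminus I_h|\le 2h$ and $|w'|\le M:=\|u'\|_{L^\infty}+\|v'\|_{L^\infty}$ a.e., the piece outside $I_h$ contributes at most $2hM^2$, and altogether
\[
\|u'-v'\|_{L^2(I)}^2\le \frac{2\|u-v\|_{L^2(I)}^2}{h^2}+6hM^2 .
\]
Minimizing the right‑hand side over $h>0$ (optimal $h=(2\|u-v\|_{L^2(I)}^2/(3M^2))^{1/3}$, provided this is $\le|I|/2$ so that $I_h\neq\emptyset$) yields $9(2/3)^{1/3}M^{4/3}\|u-v\|_{L^2(I)}^{2/3}\le 8M^{4/3}\|u-v\|_{L^2(I)}^{2/3}$, which is the claimed bound. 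In the remaining degenerate case where the optimal $h$ exceeds $|I|/2$, one checks directly that $\|u-v\|_{L^2(I)}$ is then large enough that even the trivial estimate $\|u'-v'\|_{L^2(I)}^2\le M^2|I|$ implies the inequality.

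In summary, the exponent $4/3$ on $\|u'\|_{L^\infty}+\|v'\|_{L^\infty}$ originates in the defect estimate, and the exponent $2/3$ on $\|u-v\|_{L^2(I)}$ is forced by balancing the $h^{-2}$ and $h$ terms in the last display; the one step that genuinely requires work, and the main obstacle, is the pointwise inequality in the second paragraph.
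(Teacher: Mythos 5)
Your proof is correct. Note first that the paper does not actually prove this lemma: it is quoted verbatim from the reference \cite[Lemma 5.1]{delalandemerigot}, so there is no internal proof to match. I checked your argument line by line: the pointwise inequality $\bigl|w'(x)-\tfrac{w(x+h)-w(x-h)}{2h}\bigr|\le \tfrac{D_hu(x)+D_hv(x)}{2h}$ follows exactly as you say from the two chord inequalities for each of $u$ and $v$, since $\tfrac{u(x+h)-u(x)}{h}=\delta_hu(x)+\tfrac{D_hu(x)}{2h}$ and $\tfrac{u(x)-u(x-h)}{h}=\delta_hu(x)-\tfrac{D_hu(x)}{2h}$; the bounds $0\le D_hu\le 2h\|u'\|_\infty$ and $\int_{I_h}D_hu\le 2h^2\|u'\|_\infty$ are right (the latter by writing the integral as a second difference of $t\mapsto\int_{a+h+t}^{b-h+t}u$); and the optimization gives $9(2/3)^{1/3}M^{4/3}\|u-v\|_{L^2}^{2/3}\le 8M^{4/3}\|u-v\|_{L^2}^{2/3}$. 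Your degenerate-case check is also valid, though it is not needed: the inequality $\|u'-v'\|_{L^2(I)}^2\le 2h^{-2}\|u-v\|_{L^2(I)}^2+6hM^2$ in fact holds for every $h>0$, because when $I_h=\emptyset$ one has $|I|<2h$ and the whole integral is absorbed by the $2hM^2$ term. Your route differs from the one in the cited reference, which obtains the estimate by first controlling $\|u'-v'\|_{L^2}^2$ through the distributional second derivatives of the convex functions (integration by parts against the nonnegative measures $u''$, $v''$, whose total masses are bounded by the Lipschitz constants) together with an $L^\infty$--$L^2$ interpolation for the Lipschitz function $u-v$. Your finite-difference argument avoids any appeal to $u''$ as a measure and to boundary terms, at the price of slightly more bookkeeping; both yield the same constant $8$, and yours has the advantage of being entirely elementary and self-contained.
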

We apply Lemma \ref{l:1d} to the functions $\zeta|\cdot|^2-u_\mu^{(x,v)}$ and $\zeta|\cdot|^2-u_\nu^{(x,v)}$ on each segment $[\alpha_i(x,v),\beta_i(x,v)]$, using that the derivatives in $s$ of these functions is bounded according to Lemma \ref{l:semicon}. Combining with \eqref{e:nablaphi1}, we get 
\begin{align}
&\int_{\X} |\nabla\phi_\mu(x)-\nabla\phi_\nu(x)|_{g_x}^2\dd\rho(x)\nonumber\\
&\qquad \leq C\int_{SM}\sum_{i\in I_T^\X(x,v)}\Bigl(\int_{\alpha_i(x,v)}^{\beta_i(x,v)} |\phi_\mu(b_s(x,v))-\phi_\nu(b_s(x,v))|^2\dd s\Bigr)^{1/3}\dd m(x,v)\nonumber\\
&\qquad \leq C\int_{SM}(\# I_T^\X(x,v))^{2/3}\Bigl(\int_0^T|\phi_\mu(b_s(x,v))-\phi_\nu(b_s(x,v))|^2 \dd s \Bigr)^{1/3} \dd m(x,v)\nonumber\\
&\qquad\leq C\Bigl(\int_{SM}\# I_T^\X(x,v)\dd m(x,v)\Bigr)^{2/3}\Bigl(\int_{SM} \int_0^T|\phi_\mu(b_s(x,v))-\phi_\nu(b_s(x,v))|^2 \dd s \dd m(x,v)\Bigr)^{1/3}\nonumber\\
&\qquad \leq C\Bigl(\int_{SM}\# I_T^\X(x,v)\dd m(x,v)\Bigr)^{2/3}\Bigl(\int_{M} |\phi_\mu(x)-\phi_\nu(x)|^2 \dd\rho(x)\Bigr)^{1/3}\label{e:allezlOM}
\end{align}
using Hölder's inequality for counting measure to obtain the third line, H\"older's inequality for $m$ to obtain the fourth line, and in the last line the invariance of $m$ under the geodesic flow and the fact that the density of $\rho$ is bounded below.

To control the first integral in \eqref{e:allezlOM} we will show the following proposition.
\begin{proposition}\label{p:zelditch}
Assume that $\mathscr{H}^{d-1}(\partial\X)<+\infty$, and let $T<{\rm inj}(M)$. Then 
\begin{equation}\label{e:zelditch}
\int_{SM}\# I_T^\X (x,v)\dd m(x,v)<+\infty.
\end{equation}
\end{proposition}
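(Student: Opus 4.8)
The plan is to read the statement as a Crofton-type bound: up to an additive constant, $\# I_T^\X(x,v)$ is controlled by the number of parameters $s\in[0,T]$ at which the unit-speed geodesic $s\mapsto b_s(x,v)=\exp_x(sv)$ meets $\partial\X$, and the claim then becomes the finiteness of the integral of this crossing number over $SM$. Accordingly I would set $N(x,v):=\#\{s\in[0,T]\mid b_s(x,v)\in\partial\X\}$ and split the argument into a soft reduction to $\int_{SM}N\,\dd m<+\infty$, followed by a quantitative argument exploiting that the geodesic endpoint map is a submersion.

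For the reduction, note that $\{s\in[0,T]\mid b_s(x,v)\in\X\}$ is relatively open in $[0,T]$; since $\X$ is open, the left endpoint of each of its connected components is either $0$ or a parameter at which the geodesic hits $\partial\X$, and distinct components have distinct left endpoints, so $\# I_T^\X(x,v)\le 1+N(x,v)$. Moreover a length-$T$ geodesic issued from $x$ can meet $\overline\X$ only when $x\in\X_T$, and $\X_T$ is bounded, hence of finite volume (using completeness of $M$), so $m(S\X_T)<+\infty$ while, as a function on $SM$, $\# I_T^\X$ is supported on $S\X_T$. Thus it suffices to prove $\int_{SM}N\,\dd m<+\infty$.

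To this end, fix $\delta>0$ and $T'>T$ with $T+\delta<{\rm inj}(M)$, and consider $G\colon\Omega\to M$, $G(x,v,s)=b_s(x,v)$, on the open set $\Omega:=S\X_{T'}\times(-\delta,T+\delta)$, whose closure is compact. Factoring $G=\exp\circ\,\Theta$ where $\Theta(x,v,s)=sv$ is the polar-coordinate map on the fibres of $TM$, a local diffeomorphism for $s\neq 0$, and $\exp$ is a submersion on $\{|w|<{\rm inj}(M)\}\subset TM$ (because $w\mapsto\exp_x(w)$ is a local diffeomorphism there), one sees $G$ is a submersion for $s\neq 0$; at $s=0$ one has $G(x,v,0)=x$, so $D_xG$ is the identity and $G$ is a submersion there too. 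Let now $Z:=G^{-1}(\partial\X)\cap\bigl(S\X_T\times[0,T]\bigr)$, so that $N(x,v)=\#\bigl(Z\cap\Pi^{-1}(x,v)\bigr)$ for the projection $\Pi(x,v,s)=(x,v)$. The crucial estimate is $\mathscr{H}^{2d-1}(Z)<+\infty$: since $G$ is a submersion and $\overline\Omega$ is compact, a quantitative constant rank theorem lets me cover $\overline\Omega$ by finitely many charts on each of which $G$ is, after bi-Lipschitz changes of coordinates with uniformly controlled constants in source and target, the linear projection $\R^d\times\R^d\to\R^d$; in such a chart $Z$ corresponds to a set $P\times B$ with $B\subset\R^d$ bounded and $P$ a bi-Lipschitz image of $\partial\X$ intersected with a coordinate patch, so $\mathscr{H}^{d-1}(P)\lesssim\mathscr{H}^{d-1}(\partial\X)<+\infty$ and, by the product estimate for Hausdorff measures, $\mathscr{H}^{2d-1}(P\times B)\lesssim\mathscr{H}^{d-1}(P)<+\infty$; summing over the finite cover gives $\mathscr{H}^{2d-1}(Z)<+\infty$. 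Finally, applying an Eilenberg-type coarea inequality to the Lipschitz map $\Pi|_Z\colon Z\to SM$ (the target having dimension $2d-1$) and comparing the Liouville measure $m$ with the $(2d-1)$-dimensional Hausdorff measure on the compact set $\overline{S\X_T}$, one obtains $\int_{SM}N\,\dd m\le C\,\mathscr{H}^{2d-1}(Z)<+\infty$.

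I expect the main obstacle to be precisely the estimate $\mathscr{H}^{2d-1}(Z)<+\infty$: one must upgrade the pointwise submersion property of $G$ to a genuine local normal form with \emph{uniform} bi-Lipschitz constants on a compact set, and then bound the $(2d-1)$-dimensional Hausdorff measure of the possibly highly irregular set $Z$ using only $\mathscr{H}^{d-1}(\partial\X)<+\infty$, with no rectifiability or regularity of $\partial\X$ at hand. By contrast, the reduction step and the verification that $G$ is a submersion are routine.
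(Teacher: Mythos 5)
Your proof is correct, but it takes a genuinely different route from the paper's. The paper makes the same reduction to bounding $\int_{SM}\#\{t\in[0,T]\mid b_t(x,v)\in\partial\X\}\,\dd m$, but then runs Carath\'eodory's construction: it sets $\zeta(S)=m\bigl(\bigcup_{t\in[0,T]}\Phi_t(\pi^{-1}(S))\bigr)$, proves $\zeta(S)\leq C\,\mathrm{diam}(S)^{d-1}$ directly from the flow-invariance of the Liouville measure (slicing the flow tube into $\approx T/\mathrm{diam}(S)$ pieces of equal measure), and identifies the resulting integral-geometric measure $\IG$ with the multiplicity integral via refining partitions and Federer's Theorem 2.10.8. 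You instead linearize: you put the endpoint map $G(x,v,s)=\exp_x(sv)$ into submersion normal form on a compact neighborhood, deduce $\mathscr{H}^{2d-1}(Z)\lesssim\mathscr{H}^{d-1}(\partial\X)$ for the incidence set $Z$, and conclude with the Eilenberg inequality applied to the projection $Z\to SM$ with $0$-dimensional fibers. Both arguments yield the same estimate $\int_{SM} N\,\dd m\leq C\,\mathscr{H}^{d-1}(\partial\X)$. What each buys: the paper's version uses nothing about the flow beyond measure invariance and a crude tube-volume bound, which is why it lends itself to non-smooth generalizations; yours is closer to standard geometric measure theory and makes the Crofton mechanism transparent, at the cost of invoking the smooth structure of $\exp$ and two nontrivial GMT inputs. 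One of those inputs deserves care: the product inequality $\mathscr{H}^{s+t}(A\times B)\leq C\,\mathscr{H}^s(A)\,\mathscr{H}^t(B)$ is \emph{false} for general sets, but your specific use is legitimate because the second factor $B$ is a bounded subset of $\R^d$ taken with full-dimensional (hence Lebesgue) measure, for which a direct covering argument gives $\mathscr{H}^{2d-1}(P\times B)\leq C(\mathrm{diam}\,B)^d\,\mathscr{H}^{d-1}(P)$. With that point made explicit, and granting the Eilenberg inequality in its counting-measure form, your argument is complete.
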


Proposition \ref{p:zelditch} together with \eqref{e:allezlOM} concludes the proof of \eqref{e:gn}. Then combining with \eqref{e:varinrho0} and \eqref{e:dilatedist}, we obtain \eqref{e:mapsstab}, finishing the proof of our main theorem. 

The above proposition is closely related to a result by Zelditch (see \cite[Proposition 9]{zelditch}), which however only applies in the case where $\partial\X$ is smooth (we do note that Zelditch mentions, without proof, that the result extends to the case where $\partial\X$ has a singular set of Hausdorff dimension at most $d-2$).

\begin{proof}[Proof of Proposition \ref{p:zelditch}]
We follow Caratheodory's construction (see \cite[Chapter 2.10]{federer}). Given a Borel set $S\subset M$, we set
$$
\zeta(S):=m\Bigl(\bigcup_{t\in[0,T]}\Phi_t(\pi^{-1}(S))\Bigr)
$$
where $\pi:SM\rightarrow M$ denotes the canonical projection from the unit tangent bundle $SM$ onto $M$, and $m$ again denotes the Liouville measure on $SM$. For $E\subset M$, we also define
$$
\IG_\delta(E):=\inf\sum_{S\in G} \zeta(S) 
$$
where the infimum above is taken over all countable families $G$ of Borel sets in $M$ with diameter at most $\delta$, satisfying $E\subset \bigcup_G S$. The fact that $\IG_\delta\geq \IG_\sigma$ for $0<\delta\leq \sigma\leq \infty$ implies the existence of the ``integral geometric" measure
$$
\IG(E):=\lim_{\delta\rightarrow 0^+}\IG_\delta(E)=\sup_{\delta>0} \IG_\delta(E) \qquad \text{for } E\subset M.
$$

We claim that for any bounded subset $K\subset M$, there exists $C>0$ such that for any $E\subset K$, 
\begin{equation}\label{e:IGHaus}
\IG(E)\leq C\mathscr{H}^{d-1}(E).
\end{equation}
To prove \eqref{e:IGHaus}, by the definition of $(d-1)$-dimensional Hausdorff measure, it is sufficient to prove that there exists $C>0$ such that for any Borel set $S\subset K$ it holds that $\zeta(S)\leq C{\rm diam}(S)^{d-1}$; and in fact without loss of generality it is sufficient to prove this for all $S$ with diameter less than $T$. Let $S\subset K$ be such a Borel set.
Let $\delta:={\rm diam}(S)$ and let $N\in\N$ be such that $T/N \leq \delta\leq T/(N+1)$. We set
$$
F_k:=\bigcup_{t\in[k\delta,(k+1)\delta]}\Phi_t(\pi^{-1}(S))
$$
for $k=0,\ldots,N-1$. Obviously, $F_0$ is contained in a set of diameter at most $4\delta$, hence $m(F_0)\leq C\delta^d$ for some $C$ depending only on a lower bound on the curvature of $M$ in the bounded set $K$. Moreover, since $F_k=\Phi_{k\delta}(F_0)$ for any $k$, it holds that $m(F_k)=m(F_0)$ by invariance of the Liouville measure under the geodesic flow. Hence $\zeta(S)\leq CN\delta^d=CT\delta^{d-1}\leq C'{\rm diam}(S)^{d-1}$, which concludes the proof of \eqref{e:IGHaus}.

Now, let us show that 
\begin{equation}\label{e:formulaforIG}
    \IG(E)=\int_{SM} \#\{t\in[0,T] \mid b_t(x,v)\in E\}\  \dd m(x,v)
\end{equation}
where recall that $b_t(x,v)=\pi(\Phi_t(x,v))$.
We choose Borel partitions $H_1,H_2,H_3,\ldots$ of $E$ such that each member of $H_j$ is the union of some subfamily of $H_{j+1}$ (i.e. $H_{j+1}$ is a refinement of $H_j$), and
\begin{equation}\label{e:supdiam}
\sup_{S\in H_j} {\rm diam }(S)\underset{j\rightarrow +\infty}{\longrightarrow} 0.
\end{equation}
Due to \cite[Theorem 2.10.8]{federer},
\begin{equation}\label{e:smallargument}
\sum_{S\in H_j} \zeta(S)\underset{j\rightarrow +\infty}{\longrightarrow} \IG(E).
\end{equation}
Setting
$$
A_S^T:=\{(x,v)\in SM \mid \exists t\in[0,T],\ b_t(x,v)\in S\},
$$
then for any $j$, using Tonelli's theorem,
\begin{align}
\sum_{S\in H_j}\zeta(S)&=\sum_{S\in H_j}m\left(\bigcup_{t\in [0, T]}\Phi_t(\pi^{-1}(S))\right)\nonumber\\
&=\sum_{S\in H_j}\int_{SM} \chi_{A_S^T}(x,v) \dd m(x,v)\nonumber\\
&=\int_{SM} \#\{S\in H_j \mid \exists t\in[0,T],\ b_t(x,v)\in S\}\dd m(x,v)\label{e:SinHj}
\end{align}
Let us show that
\begin{equation}\label{e:SinHj2}
\Xi_j:=\ \#\{S\in H_j \mid \exists t\in[0,T],\ b_t(x,v)\in S\}\underset{j\rightarrow +\infty}{\longrightarrow}\#\{t\in[0,T] \mid b_t(x,v)\in E\} =:\Xi_\infty
\end{equation}
for any $(x,v)\in SM$. First, suppose $\Xi_\infty<\infty$, thus there exist $0\leq t_1<t_2<\ldots<t_{\Xi_\infty}\leq T$ such that $b_{t_i}(x,v)\in E$ for all $i=1,\ldots,\Xi_\infty$; since $T<{\rm inj}(M)$ we see all $b_{t_i}(x,v)$ are distinct. Then for all $j$ large enough, by \eqref{e:supdiam} we have 
\begin{align*}
    \sup_{S\in H_j}{\rm diam}(S)<\min_{1\leq i_1, i_2\leq \Xi_\infty} \dist(b_{t_{i_1}}(x,v), b_{t_{i_2}}(x,v)),
\end{align*}
and since $H_j$ consists of mutually disjoint sets this implies exactly $\Xi_\infty$ of the sets from $H_j$ contain $b_t(x, v)$ for some $t\in [0, T]$. Otherwise $\Xi_\infty=+\infty$, then for any $N$ we can find $0\leq t_1<t_2<\ldots<t_N\leq T$ such that $b_{t_i}(x,v)\in E$ for all $i=1,\ldots,N$. Again by \eqref{e:supdiam} and the fact that all $b_{t_i}(x,v)$ are distinct we also obtain the convergence \eqref{e:SinHj2} in this case. Since each $H_{j+1}$ is a refinement of $H_j$, we see that $\Xi_j$ is increasing in $j$ and clearly each $\Xi_j\geq 0$, thus we can combine \eqref{e:smallargument}, \eqref{e:SinHj}, \eqref{e:SinHj2} with the monotone convergence theorem to conclude that \eqref{e:formulaforIG} holds.

Finally, 
$$
\# I_T^{\X}(x,v)\leq \#\{t\in[0,T] \mid b_t(x,v)\in \partial\X\}
$$
by definition of $I_T^{\X}(x,v)$. Together with \eqref{e:IGHaus} and \eqref{e:formulaforIG} applied to $E=\partial\X$, this concludes the proof of Proposition \ref{p:zelditch}.
\end{proof}

\bibliography{ot.bib}
\bibliographystyle{plain}
\end{document}